\newcommand{\EX}{\mathbb{E}}
\newcommand{\pp}[2]{\frac{\partial #1}{\partial #2}}
\newcommand{\dd}[2]{\frac{\d #1}{\d #2}}
\renewcommand{\d}{\ensuremath{\,\mathrm{d}}}
\newcommand{\laplace}{\Delta}
\renewcommand{\vec}[1]{\ensuremath{\boldsymbol{#1}}}
\renewcommand{\abs}[1]{\left| #1 \right|}
\newcommand{\norm}[1]{\left\|{#1}\right\|}
\newcommand{\inp}[2]{\left\langle #1 , #2\right\rangle}
\newcommand{\ssum}[2]{\sum\limits_{#1}^{#2}}
\newcommand{\T}{\mathrm{T}}
\newcommand{\hS}[1]{\hat{S}_{#1}}
\newcommand{\FW}{{\rm FW}}
\newcommand{\OM}{{\rm OM}}
\newcommand{\me}{{\rm e}}
\renewcommand{\phi}{\varphi}
\renewcommand{\epsilon}{\varepsilon}
\DeclareMathOperator*{\argmax}{argmax}
\begin{document}
\ensubject{fdsfd}

\ArticleType{ARTICLES}
\Year{2017}
\Month{January}%
\Vol{60}
\No{1}
\BeginPage{1} %
\DOI{10.1007/s11425-000-0000-0}
\ReceiveDate{January 1, 2017}
\AcceptDate{January 1, 2017}

\title[]{The Graph Limit of The Minimizer of The Onsager-Machlup Functional and Its Computation}
{The Graph Limit of The Minimizer of The Onsager-Machlup Functional and Its Computation}

\author[1]{Qiang Du}{qd2125@columbia.edu}
\author[2,$\ast$]{Tiejun Li}{tieli@pku.edu.cn}
\author[3]{Xiaoguang Li}{lixiaoguang@hunnu.edu.cn}
\author[4]{Weiqing Ren}{matrw@nus.edu.sg}



\address[1]{Department of Applied Physics and Applied Mathematics, Columbia University, New York, NY {\rm 10027}, USA}
\address[2]{LMAM and School of Mathematical Sciences, Peking University, Beijing {\rm 100871}, P.R.China}
\address[3]{Beijing Computational Science Research Center, Beijing, {\rm 100193}, China, \\MOE-LCSM, School of Mathematics and Statistics, Hunan Normal University, Changsha, Hunan  {\rm 410081} , P. R. China}
\address[4]{Department of Mathematics, National University of Singapore, Singapore {\rm 119076}, Singapore}

\abstract{The Onsager-Machlup (OM) functional is well-known  for characterizing
the most probable transition path of a diffusion process with non-vanishing noise.
However, it suffers from a notorious issue that the functional is unbounded below
when the specified transition time $T$ goes to infinity. This hinders
the interpretation of the results obtained by minimizing the OM functional.
We provide a new perspective on this issue. Under mild conditions,
we  show that although the infimum of the OM functional becomes
unbounded when $T$ goes to infinity, the sequence of minimizers
does contain convergent subsequences on the space of curves. The graph limit of this
 minimizing subsequence is an extremal
of the abbreviated action functional, which is related to the OM functional
via the Maupertuis principle with  an optimal energy. We further propose an
 energy-climbing geometric minimization algorithm (EGMA) which identifies the optimal energy
and the graph limit of the transition path simultaneously.
This algorithm is successfully applied to several typical examples in rare event studies.
Some interesting comparisons with the Freidlin-Wentzell action functional are also made.}

\keywords{Onsager-Machlup functional, Freidlin-Wentzell functional, graph limit, geometric minimization, Maupertuis principle}

\MSC{14Axx, 32Bxx}

\maketitle

\section{Introduction}

Consider a stochastic dynamics modeled by the stochastic differential equation (SDE)
\begin{equation}\label{eq:diff}
  \d \vec{X}_t = \vec{b}(\vec{X}_t)\d t + \sqrt{2\epsilon}\d \vec{W}_t,
\end{equation}
where $\vec{X}_t, \vec{b}\in\mathbb{R}^d$,  $\boldsymbol{W}_t=(W^1_t,W^2_t,\ldots,W^d_t)$ is the standard $d$-dimensional Wiener process with $\EX W^i_t=0$ and $\EX (W^i_tW^j_s)=\delta_{ij}\cdot\min(t,s)$ for $i,j=1,\ldots,d$ and $t,s\in \mathbb{R}^+$.
Due to the presence of the noise, the system makes transitions from one metastable state
to another; when $\epsilon$ is small, however, these transitions happen on a time scale
which is much longer than the relaxation time scale of the system. These rare but important transition events are very common in different fields
of science, and their study has attracted much attention in recent years
\cite{Bovier1, Bovier2, CameronJNS, EVReview,RMP,SchuttBook}.
One major object in the study of such rare events is to understand the transition mechanism, which
can be characterized by the most probable  path (MPP), i.e. the transition path with a
dominant probability,
connecting an initial state $\vec{x}_s$  and a terminal state $\vec{x}_f$
in the configuration space.
How to characterize and compute these transition paths is a fundamental problem in the study of rare events \cite{durr1978onsager, weinan2002string, weinan2006towards, heymann2008geometric, pinski2010transition, zhou2008adaptive, Xiaoliang2011,zhang2016recent} and also the focus of this paper.
In particular, we study the transition path at finite noise provided
by the Onsager-Machlup functional and its graph limit  as the prescribed transition time goes to infinity.

In the zero noise limit, i.e. $\epsilon\to 0$, it is well-known from
the large deviation theory that the MPP from $\vec{x}_s$ to $\vec{x}_f$
is given by the solution to the double minimization
problem \cite{Weinan2004MAM, heymann2008geometric}
\begin{equation}\label{eq:doubleinfFW}
  S^{\FW}(\vec{x}_s,\vec{x}_f ) = \inf\limits_{T>0}\inf\limits_{\psi(0)=\vec{x}_s, \psi(T)=\vec{x}_f} S_T^{\FW}[\psi],
\end{equation}
where $\psi(t)$ is an absolutely continuous function on $[0,T]$ and
$ S_T^{\FW}[\psi]$ is the Freidlin-Wentzell (FW) action functional
\begin{equation}\label{eq:fw}
  S_T^{\FW}[\psi] = \int_{0}^{T} \frac{1}{2}|\dot{\psi}-\vec{b}(\psi)|^2 \d t.
\end{equation}
Here $\dot{\psi}$ denotes the time derivative of $\psi$.
It is known  that the infimum is achieved when $T=\infty$
if  the MPP connecting $\vec{x}_s $ and $\vec{x}_f$ passes through a stationary point of the deterministic dynamics $\dot{\vec{x}} = \vec{b}(\vec{x})$.
The quasi-potential $S^{\FW}(\vec{x}_s ,\vec{x}_f)=\inf_T\inf_{\psi} S_T^{\FW}[\psi]$ characterizes,
in the zero-noise limit, the transition rate, the invariant distribution of the
stochastic dynamics and so forth. To avoid the difficulty caused by the infinite transition time,
an alternative approach, which uses the arclength parameterization, has been proposed
for the computation of the MPP on the space of curves \cite{heymann2008geometric}.

For finite $\epsilon>0$, the Onsager-Machlup (OM) functional
has been proposed in the literature to find the MPP
\cite{dunlop2015map, fujisaki2010onsager, fujisaki2013multiscale, lee2017finding, pinski2010transition, wang2010kinetic}.
The OM functional is given by
\begin{equation}\label{eq:OM}
  S_T^{\OM}[\psi]=\int_{0}^{T} L(\psi,\dot{\psi})\d t
\end{equation}
if $\psi$ is absolutely continuous on $[0,T]$, and takes the infinite value otherwise.
Here the OM Lagrangian is given by
\begin{equation}\label{eq:lagrangian}
  L(\vec{x},\vec{y})=\frac{1}{2}\abs{\vec{y}-\vec{b}(\vec{x})}^2 + \epsilon\nabla\cdot\vec{b}(\vec{x}) = \frac{1}{2}\abs{\vec{y}}^2 - \vec{b}(\vec{x})\cdot\vec{y} - U(\vec{x}),
\end{equation}
where $U(x)$, the so-called path potential \cite{pinski2010transition}, is given by
\begin{equation}\label{eq:U}
  U(\vec{x}) = -\epsilon\nabla\cdot\vec{b}(\vec{x}) - \frac{1}{2}\abs{\vec{b}(\vec{x})}^2.
\end{equation}
The OM functional was first introduced by Onsager and Machlup for
SDEs with linear drift $\vec{b}(\vec{x})=-\gamma\vec{x}$ and constant diffusion
by means of path integrals \cite{onsager1953fluctuations}.
Indeed, their original formulation does not
contain the term $\epsilon \nabla\cdot \vec{b}(\vec{x})$ in \eqref{eq:lagrangian}
as this is only a constant in the case of linear drift.
It was later generalized to cases with nonlinear drift and
non-constant diffusion terms based on either physical arguments
\cite{Graham1977} or more rigorous mathematical derivations
\cite{Fujita1982, IkedaWatanabe1980, takahashi1981probability, Zeitouni1989onsager}. It was also argued in \cite{durr1978onsager} that,
in the scalar case with a constant diffusion,  the MPP
 is given by the minimizer of the OM functional.
Indeed, as shown in
\cite{Fujita1982, IkedaWatanabe1980, takahashi1981probability},
the OM functional arises from
the limiting probability of a $\delta$-ball problem, i.e.
\begin{equation} \label{eq:OM-DeltaBall1}
\mathbb{P}\left(\sup_{0\le t\le T}|\vec{X}_t-\psi(t)|\le \delta\right)\sim C\exp\left(-\frac{2\epsilon \lambda T}{\delta^2}\right)\exp\left(-\frac{1}{2\epsilon}S^{\OM}_T[\psi]\right)
\end{equation}
as $\delta\rightarrow 0+$. Here $\epsilon>0$ is fixed,  $C$ is a positive constant and $\lambda$ is the leading eigenvalue of the operator $-\frac12\Delta$ with zero boundary condition on the domain $|\vec{x}|\le 1$. The first exponential term on the right hand side of \eqref{eq:OM-DeltaBall1} can be also identified as the probability $\mathbb{P}(\sup_{0\le t\le T}|\sqrt{2\epsilon}\vec{W}_t|\le \delta)$. Equation~\eqref{eq:OM-DeltaBall1} shows that the minimizer
of the OM functional characterizes the MPP under a suitable rescaling as $\delta\rightarrow 0$  while $\epsilon>0$ is fixed.
This methodology has been used to find the MPP in many practical problems,
such as in the study of transition pathways of phage-$\lambda$
switching \cite{wang2010kinetic}, conformation changes of polymer systems \cite{fujisaki2010onsager, fujisaki2013multiscale}, protein folding pathways \cite{lee2017finding},
and the maximum posteriori estimator \cite{dunlop2015map}, etc.

Although both the FW and OM functionals  are widely used in applications,
the connection between them is not fully understood. In practice, making the choice between the FW and OM functionals is often still a dilemma.  The minimization of the two functionals gives different results and the minimizer of the OM functional is even dependent on the choice of the transition time $T$. Understanding the relations between the FW and OM functionals is thus an interesting mathematical problem. One formal connection between these two is that the FW functional can be obtained
from the OM functional by simply setting $\varepsilon=0$, therefore,
they are likely equivalent in the zero-noise limit under certain conditions.
Another formal connection can be made via the path integral approach. With the Girsanov theorem, we have
\begin{equation}\label{eq:Girsanov}
\EX F[\vec{X}_t]=\EX \Big(F[\vec{W}_t]
\exp\big(-\frac12\int_0^T |\vec{b}(\vec{W}_t)|^2\d t +\int_0^T\vec{b}(\vec{W}_t)\d \vec{W}_t\big)\Big)
\end{equation}
for Brownian functional $F[\vec{X}]$, where we take $\varepsilon=1/2$ for simplicity and  $\vec{X}_t$
is the solution of the SDE \eqref{eq:diff}.  In the path integral, we formally represent the Wiener measure using the density $p(\vec{x})=Z^{-1} \exp\big(-\frac12 \int_0^T |\dot{\vec{x}}|^2\d t\big)$.
Then the path weight for $\{\vec{X}_t\}_{t\in [0,T]}$ will be given by
the exponential of the FW or OM functional respectively, depending on whether
 Ito or Stratonovich version of the stochastic integral  $\int_0^T\vec{b}(\vec{W}_t)\d \vec{W}_t$ being
interpreted  as $\int_0^T\vec{b}(\vec{x})\cdot \dot{\vec{x}} \d t$ in path integrals. There have been some  investigations on the relationship between the FW and OM functionals.
For example, in \cite{pinski2012gamma} it was shown
that the OM functional $\Gamma$-converges to a functional
completely characterized by the FW functional when $T=\varepsilon^{-1}$ and
$\varepsilon\rightarrow 0$; in \cite{pinski2010transition}, the numerical studies
showed that in general the OM functional does not have a lower bound
as $T\rightarrow \infty$, and the minimizer of the OM functional with $\varepsilon>0$ exhibits
quite different behavior compared to that of the FW functional.

Accepting the assertion that the minimizer of OM functional characterizes
the MPP for the SDEs \eqref{eq:diff} when $\varepsilon$ is finite,
in this paper, we analyze the behaviour of the minimizer as the prescribed transition
time $T$ goes to infinity. This is meaningful since
the transition time between metastable states is usually exponentially large
in $O(1/\epsilon)$ as suggested by the Arrhenius law.
In particular, we will focus on the graph limit of the OM minimizers
on the space of curves.  As we will show, the minimization problem with $T=\infty$ is not a well-posed problem. However, we can study the graph limit of the OM minimizers
as $T$ goes to infinity, and this graph limit indeed gives a simpler
description about the MPP of the OM functional with finite but sufficiently
large $T$.  This fact is clearly demonstrated in the example shown in ~\ref{fig:2D},
in which the path with a sharp corner has a simpler structure
but still well characterizes
the transition path one would obtain with a sufficiently large $T$.
In this sense, the graph limit can be viewed as a good description
of the MPP of the OM functional when $T$ is finite but sufficiently large.
This situation draws an analogy with the shock solution
of hyperbolic conservation laws  where the discontinuous shock
solution with vanishing viscosity provides a simpler
description for the solution to the corresponding parabolic system
with small viscosity \cite{dafermos2005hyperbolic}.

A natural procedure to investigate this graph limit is to first
find  the minimizer with a fixed $T$,
then let $T$ go to infinity. However, this
procedure based on the original OM functional is neither effective
nor transparent for
the characterization of the graph limit due to the time parametrization of the path.
To avoid this difficulty, we directly study the limit of the minimizers on the space of curves
in which the path is parametrized by the normalized arclength.
This gives more direct physical intuition and indeed the time parameterization
can be recovered from this geometric path afterwards.
Specifically, using a similar idea employed in \cite{heymann2008geometric} we
reformulate the double minimization problem in a geometric fashion and
look for the extremal of the action functional
\begin{equation}\label{eq:ROM}
\hat{S}_E[\phi]=\int_{0}^{1}\sqrt{2E-2U(\phi)}\abs{\phi'} - \vec{b}(\phi)\cdot\phi'\d\alpha
\end{equation}
with a proper energy $E$, where $\phi\in C[0,1]$ is  the geometric path with an
arclength type of parameterization,
and $\phi'$ is the derivative of $\phi$ with respect to $\alpha\in [0,1]$
(see details in Theorems \ref{prop:conv} and \ref{thm:extremal}).
The functional $\hat{S}_E[\phi]$ is referred to as the geometric OM functional.
Numerically, this approach was also pursued in \cite{faccioli2006dominant, wang2010kinetic},
but  no theory was developed there  on the choice of the energy $E$ beforehand.
This point will be made clear in this work through rigorous analysis.

We now summarize the main contributions of this paper. First, we demonstrate that the cause of the singularity arising
from minimizing $S^{\OM}_T[\psi_T]$ when $T\rightarrow\infty$
can be explicitly identified by separating the OM functional into two parts:
a regular part containing the geometric OM functional \eqref{eq:ROM},
and a singular part given by $-E(T)T$. It is this singular part that drives
the OM functional to $-\infty$ as $T\rightarrow \infty$. The graph limit of the
minimizer of the OM functional can be identified from the regular part, i.e.
the geometric OM functional \eqref{eq:ROM}.
 This observation is crucial for the subsequent
theoretical studies.  Secondly, we prove that, up to a subsequence, the graph of the minimizer
of the OM functional uniformly converges to $\phi^\star$
as $T_{k}\rightarrow\infty$, and the corresponding transition energy $E_{k}$
converges to $E^\star= \max_{\vec{x}}U(\vec{x})$ for gradient dynamics with
$\vec{b}(\vec{x})=-\nabla V(\vec{x})$. Furthermore, we prove
that the limit $\phi^\star$
is an extremal of the geometric OM functional \eqref{eq:ROM}
with the energy $E=E^\star$. Thirdly, we propose an iterative numerical method
to identify the graph limit $\phi^\star$ and at the same time, to compute
the critical energy $E^\star$.
We also analyze the convergence of the semi-discretized numerical scheme
and apply the numerical method to several typical model problems in rare event studies. On the technical side, proving the compactness of the graph minimizers
is highly nontrivial and represents the main challenge in the analysis.
To the best of our knowledge,
both the theoretical results and the numerical method are new and should be beneficial
to future studies in understanding FW-OM connections and calculus of variations with similar issues.

The rest of the paper is organized as follows. In Section \ref{sec:Assump},
we introduce the notations and assumptions that are used in the later analysis.
In Section \ref{sec:MinOM},  we theoretically study the graph limit of
the minimizer of the OM functional  by first transforming the minimization of the OM functional with respect to $\psi$ and $T$
into a geometric minimization problem on the space of curves through
the  Maupertuis principle. We then show the subsequence convergence
of the graph minimizers and the convergence of the corresponding transition energy.
In Section \ref{sec:Compare}, we compare the minimizers of the FW and OM functionals
for some simple but enlightening examples. In Section \ref{sec:Method},
we propose an iterative energy-climbing geometric minimization algorithm (EGMA) and
discuss its convergence property.  In Section \ref{sec:Results},
we apply the numerical method to some typical model problems
and compare them with the FW minimizers. Some concluding remarks are made in Section \ref{sec:con}.
The technical details about the BV compactness of the derivative of $\{\phi_k\}$,
are provided in the Appendix.

\section{Assumptions and preliminary setup}\label{sec:Assump}

Before proceeding to the analysis of the OM functional, we first introduce
some notations and assumptions. Our analysis will focus on gradient systems
with the drift function $\vec{b}(\vec{x})=-\nabla V(\vec{x})$  for some potential energy
$V(\vec{x})\in C^7(\mathbb{R}^d)$,
although some results can be readily extended to the non-gradient case.
 In the special gradient flow case,
the path potential $U(\vec{x})$, which plays an important role in the analysis
of the OM functional, is given by
\begin{equation}\label{eq:GU}
  U(\vec{x}) = \epsilon\laplace V(\vec{x})-\frac{1}{2}\abs{\nabla V(\vec{x})}^2.
\end{equation}
We have $U(\vec{x})\in C^5$ by the smoothness assumption on $V$.
We further assume that the potentials $V$ and $U$ satisfy the following properties:

\begin{assumption}\label{asm:potV}
There exists a local minimizer $\vec{x}_m$ of $V(\vec{x})$, such that $\nabla V(\vec{x}_m)=0$, $\nabla^2 V(\vec{x}_m)$ is strictly positive definite.
\end{assumption}

\begin{assumption}\label{asm:decay}
 The maximum points of $U(\vec{x})$ are contained in a bounded domain.
\end{assumption}

\begin{assumption}\label{asm:levelset}
  For any $E\in \mathbb{R}$, the level set $\mathcal{L}_E=\{\vec{x}\in\mathbb{R}^d|U(\vec{x})=E\}$ can be decomposed into a finite number of closed and connected subsets, i.e.
  \begin{equation}\label{eq:decomposelevelset}
    \mathcal{L}_E= \bigcup\limits_{k=1}^N B_k,
  \end{equation}
where the subsets $B_k$ are closed and connected, and  $B_j\cap B_k=\emptyset$ if $j\neq k$.
\end{assumption}

When $\epsilon=0$, the path potential  reduces to
$U(\vec{x})=-\frac{1}{2}\abs{\nabla V}^2\leqslant 0$,
which attains its maximum iff $\nabla V(\vec{x})=0$, i.e.
the stationary states of the deterministic dynamics $\dot{\vec{x}}=-\nabla V(\vec{x})$. Assumption \ref{asm:decay} requires that all points satisfying $\nabla V(\vec{x})=0$
lie in a bounded region. This is a usual assumption when dealing with the FW functional.
When $\epsilon$ is finite but not very large, this requirement is not restrictive either.
Indeed this is true in most previous studies on the OM functional.

For the FW functional, the point $\vec{x}^\star$ is called a critical point
if $\nabla V(\vec{x}^\star)=0$. In the following,
we generalize this concept to the OM functional.
\begin{definition}\label{def:cp}
$\vec{x}^\star\in\mathbb{R}^d$ is called a critical point of the OM functional
if
\begin{equation}\label{eq:critical}
  U(\vec{x}^\star)= \max_{\vec{y}\in\mathbb{R}^d}U(\vec{y}).
\end{equation}
\end{definition}

Denoting  the set of critical points by $\Lambda$,
we make the following  mild assumptions on them.

\begin{assumption}\label{asm:discret}
 $\Lambda$ is discrete and has no accumulation points.
\end{assumption}

\begin{assumption}\label{asm:nondegenerate}
  $\nabla^2 U(\vec{x})$ has no zero eigenvalue for every $\vec{x}\in\Lambda$.
\end{assumption}

Let $C[0,T]$ denote the set of continuous functions on $[0,T]$
equipped with the norm $\norm{f}=\sup_{t\in [0,T]}\abs{f(t)}$.
Define
$$C_{\vec{x}_s}^{\vec{x}_f}[0,T]=\big\{\psi\in C[0,T]|\ \psi(0)=\vec{x}_s,\psi(T)=\vec{x}_f\big\}.$$
Moreover, let $\bar{C}[0,T]$, $\bar{C}_{\vec{x}_s}^{\vec{x}_f}[0,T]$ be the set of
corresponding absolutely continuous functions. We further define
\begin{equation}\label{eq:acm}
   \bar{C}_{M}^{\vec{x}_s,\vec{x}_f}[0,T]=\{\psi\in
\bar{C}_{\vec{x}_s}^{\vec{x}_f}[0,T]|\ |\dot\psi|\leqslant M\ a.e.\}
\end{equation}
and
\begin{equation}\label{eq:acme}
    \bar{C}_{M,E}^{\vec{x}_s,\vec{x}_f}[0,T] = \{\psi\in \bar{C}_{M}^{\vec{x}_s,\vec{x}_f}[0,T]|
  U(\psi(t))\leqslant E,\  t\in [0,T]\}
  \end{equation}
for $M>0$.

\begin{lemma}[Compactness of $\bar{C}_M$ and $\bar{C}_{M,E}$]\label{lem:compact}
\begin{enumerate}
  \item For any $M>0$, $\vec{x}_s, \vec{x}_f\in\mathbb{R}^d$, the set
$\bar{C}_{M}^{\vec{x}_s,\vec{x}_f}[0,T]$
  is compact in $C[0,T]$.
  \item For any $M>0, E\in\mathbb{R}$, $\vec{x}_s, \vec{x}_f\in\mathbb{R}^d$, the set
$ \bar{C}_{M,E}^{\vec{x}_s,\vec{x}_f}[0,T]$
  is compact in $C[0,T]$.
\end{enumerate}
\end{lemma}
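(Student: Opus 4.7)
The plan is to deduce both parts from the classical Arzelà--Ascoli theorem applied to a family of uniformly Lipschitz continuous functions, and then verify closedness under uniform convergence.

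First I would observe that any $\psi\in\bar{C}_M^{\vec{x}_s,\vec{x}_f}[0,T]$ is in fact Lipschitz with constant $M$. Indeed, by the absolute continuity and the a.e.\ bound $|\dot\psi|\leqslant M$, for any $0\leqslant s<t\leqslant T$,
\begin{equation*}
|\psi(t)-\psi(s)|=\Big|\int_s^t \dot\psi(r)\d r\Big|\leqslant M|t-s|.
\end{equation*}
This immediately gives equicontinuity of the whole family. Combined with the boundary condition $\psi(0)=\vec{x}_s$, one also gets the uniform pointwise bound $|\psi(t)|\leqslant |\vec{x}_s|+MT$. Hence Arzelà--Ascoli yields that the family is relatively compact in $C[0,T]$.

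Next I would show the family is closed in $C[0,T]$. Suppose $\psi_n\in \bar{C}_M^{\vec{x}_s,\vec{x}_f}[0,T]$ converges uniformly to some $\psi\in C[0,T]$. The endpoint conditions $\psi(0)=\vec{x}_s$, $\psi(T)=\vec{x}_f$ pass to the limit trivially. Passing to the limit in the Lipschitz estimate above gives $|\psi(t)-\psi(s)|\leqslant M|t-s|$, so $\psi$ is Lipschitz, hence absolutely continuous, with $|\dot\psi|\leqslant M$ almost everywhere (Rademacher / the elementary fact that a Lipschitz function on an interval is differentiable a.e.\ with derivative bounded by its Lipschitz constant). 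Therefore $\psi\in\bar{C}_M^{\vec{x}_s,\vec{x}_f}[0,T]$, proving part (1).

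For part (2) I would essentially repeat the argument and only add one extra step: the constraint $U(\psi_n(t))\leqslant E$ for all $t\in[0,T]$ is preserved under uniform convergence because $U\in C^5$ is continuous. Thus $U(\psi(t))=\lim_n U(\psi_n(t))\leqslant E$ for every $t$, which together with part (1) places $\psi$ in $\bar{C}_{M,E}^{\vec{x}_s,\vec{x}_f}[0,T]$. No step here is genuinely hard; the only mild subtlety is the a.e.\ differentiability inheritance of the uniform limit, which is handled by recognizing the limit as Lipschitz with the same constant $M$.
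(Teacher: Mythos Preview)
Your proof is correct and follows essentially the same approach as the paper: the paper invokes Arzel\`a--Ascoli for part (1) (citing it as a known lemma) and then observes for part (2) that $\bar{C}_{M,E}^{\vec{x}_s,\vec{x}_f}$ is a closed subset of the compact set $\bar{C}_M^{\vec{x}_s,\vec{x}_f}$ by continuity of $U$. You have simply spelled out the details that the paper leaves implicit.
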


\begin{proof}
The first statement is the same as Lemma 2.5 in \cite{heymann2008geometric}, which is a standard application of Arzel\`a-Ascoli theorem. The second statement is true since  $U$ is continuous and $\bar{C}_{M,E}^{\vec{x}_s,\vec{x}_f}$ is a closed subset of $\bar{C}_M^{\vec{x}_s,\vec{x}_f}$.
\end{proof}

Hereafter, we will neglect the super- or sub-scripts $\vec{x}_s$ and $\vec{x}_f$
to simplify the notations when we do not emphasize
the dependence on the initial and terminal states. It is also evident that
the function spaces defined above and the compactness lemma are also applicable  to
functions on the interval $[0,1]$, which is the case when we
study the geometric minimizer on the space of curves.
With a slight abuse of notation, we will use $\bar{C}_{M}$ ($\bar{C}_{M,E}$)
for $\bar{C}_{M}[0,T]$ ($\bar{C}_{M,E}[0,T]$) or
$\bar{C}_{M}[0,1]$ ($\bar{C}_{M,E}[0,1]$)
in later text when the domain is not specifically emphasized.

 For every function $f\in \bar{C}[0,T]$, we denote its graph by
\[\gamma(f) = \{f(t)|t\in [0,T]\}.\]
For any two functions $f_1\in \bar{C}[0,T_1]$ and $f_2\in \bar{C}[0,T_2]$
with possibly different parameterizations, we use the Fr\'echet distance to measure the distance
between their graphs $\gamma(f_1)$ and $\gamma(f_2)$:
\begin{equation}\label{eq:freche}
  \rho(f_1,f_2) = \inf\limits_{t_i:[0,1]\to [0,T_i], i=1,2}\
\max_{\alpha\in[0,1]}\abs{f_1\circ t_1(\alpha)-f_2\circ t_2(\alpha)},
\end{equation}
where the infimum is taken over all monotonically increasing, continuous and
surjective reparametrizations $t_1$ and $t_2$.
It is easy to check that $\rho(f_1,f_2)=0$ if the two functions have the same graph.

Given the initial and final states $\vec{x}_s, \vec{x}_f$,
we aim at solving the double minimization problem
\begin{equation}\label{eq:doublemin}
\inf\limits_{T>0}\inf\limits_{\psi\in \bar{C}_{\vec{x}_s}^{\vec{x}_f}[0,T]}
\left\{S_T^{\OM}[\psi] = \int_{0}^{T}\left(
\frac{1}{2}|\dot{\psi}+\nabla V(\psi)|^2-\epsilon\laplace V(\psi)\right)\d t\right\}.
\end{equation}
Note that
\begin{eqnarray*}
S_T^{\OM}[\psi] &=& \int_{0}^{T}\left(
\frac{1}{2}|\dot{\psi}|^2 + \nabla V\cdot \dot{\psi} -U(\psi)\right)\d t\\
&=& \int_{0}^{T}\left(\frac{1}{2}|\dot{\psi}|^2 - U(\psi)\right)\d t
+ V(\vec{x}_f) - V(\vec{x}_s).
\end{eqnarray*}
Since $V(\vec{x}_f) - V(\vec{x}_s)$ is a constant when $\vec{x}_s$ and $\vec{x}_f$
are given,  we will ignore this constant and
use the resulting simplified functional in place of $S_T^{\OM}[\psi]$.
With a slight abuse of notation, we still
use $S_T^{\OM}[\psi]$ to denote the simplified functional and
call it the OM functional:
\begin{equation}
S_T^{\OM}[\psi] = \int_{0}^{T} L(\psi, \dot\psi) \d t,
\end{equation}
where the Lagrangian $L(\vec{x},\vec{y})$ is given by
\begin{equation}
L(\vec{x},\vec{y}) = \frac{1}{2}|\vec{y}|^2 - U(\vec{x}).
\end{equation}

In the following, we assume that for any fixed $T>0$,
the minimizer of $S_T^{\OM}[\psi]$ is unique and
has  a uniformly  bounded finite length.  We state it precisely as follows.
\begin{assumption}\label{asm:unique}
  For any given $\vec{x}_s, \vec{x}_f\in\mathbb{R}^d$ and $T>0$, $S_T^{\OM}[\psi]$ has a unique minimizer $\psi_T$.
\end{assumption}

\begin{assumption}\label{asm:uniformlybounded}
For any given $\vec{x}_s, \vec{x}_f\in\mathbb{R}^d$ and $T>0$, there exists a constant $M=M(\vec{x}_s,\vec{x}_f)>0$ such that  the minimizer $\psi_T$ of $S_T^{\OM}[\psi]$ satisfies $$\int_{0}^{T}|\dot{\psi}_T|\d t\leqslant M.$$
\end{assumption}

\begin{corollary}\label{col:bound}
Under the Assumption \ref{asm:uniformlybounded}, $\psi_T$ is uniformly bounded in $C[0,T]$ for any $T>0$.
\end{corollary}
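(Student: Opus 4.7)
The plan is to exploit absolute continuity of $\psi_T$ together with the uniform $L^1$ bound on $\dot{\psi}_T$ provided by Assumption \ref{asm:uniformlybounded}. Since $\psi_T\in \bar{C}_{\vec{x}_s}^{\vec{x}_f}[0,T]$, it is absolutely continuous with $\psi_T(0)=\vec{x}_s$, so for every $t\in [0,T]$ we may write
\begin{equation*}
\psi_T(t)=\vec{x}_s+\int_0^t \dot{\psi}_T(s)\d s.
\end{equation*}

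Taking norms and using the triangle inequality, I would bound
\begin{equation*}
\abs{\psi_T(t)}\leqslant \abs{\vec{x}_s}+\int_0^t \abs{\dot{\psi}_T(s)}\d s \leqslant \abs{\vec{x}_s}+\int_0^T \abs{\dot{\psi}_T(s)}\d s\leqslant \abs{\vec{x}_s}+M,
\end{equation*}
where the final inequality invokes Assumption \ref{asm:uniformlybounded}. Since the resulting bound $\abs{\vec{x}_s}+M$ depends only on $\vec{x}_s$ and $\vec{x}_f$ (through $M=M(\vec{x}_s,\vec{x}_f)$) and is independent of $T$, taking the supremum over $t\in[0,T]$ gives $\norm{\psi_T}_{C[0,T]}\leqslant \abs{\vec{x}_s}+M$, which is the required uniform bound.

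There is no real obstacle here; the corollary is essentially a one-line consequence of the fact that a $T$-independent bound on the arclength of a path starting at a fixed point $\vec{x}_s$ forces a $T$-independent $L^\infty$ bound. The only point worth emphasizing in the writeup is that $M$ from Assumption \ref{asm:uniformlybounded} does not depend on $T$, so the word ``uniformly'' in the statement refers to uniformity across the family $\{\psi_T\}_{T>0}$.
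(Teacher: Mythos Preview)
Your proof is correct and matches the paper's own argument essentially line for line: both write $\psi_T(t)=\psi_T(0)+\int_0^t\dot{\psi}_T(s)\d s$, apply the triangle inequality, and invoke Assumption~\ref{asm:uniformlybounded} to obtain the uniform bound $\abs{\vec{x}_s}+M$.
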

\begin{proof}We have
\begin{equation}\label{eq:col1}
  \abs{\psi_T(t)} = \Big|\psi_T(0) + \int_{0}^{t}\dot{\psi}_T(s)\d s\Big|\leqslant \abs{\vec{x}_s} + \int_{0}^{t}|\dot{\psi}_T|\d s\leqslant \abs{\vec{x}_s} + M.
\end{equation}
\end{proof}

The boundedness of $\psi_T$ also implies the boundedness of $f(\psi_T)$ for any continuous
function $f$.

\section{Graph limit of the OM minimizer}\label{sec:MinOM}

In this section, we study the minimizers of the OM functional and their graph limit.
First, we consider the problem of minimizing the OM functional with the transition time
$T$ fixed. Then we transform the minimization problem to a geometric one
in which the path is parameterized by the normalized arc-length using the
Maupertuis principle. This is followed by the analysis of the minimization problem
with respect to the transition time $T$. The last part of this section is
concerned with the graph limit of the minimizers and its governing equation.

\subsection{Minimizing $S^{\OM}_T[\psi]$ for a fixed time $T$}

We first consider the minimization of $S^{\OM}_T[\psi]$ over the set of absolutely continuous functions
connecting given initial and final states.
For a fixed $T>0$, this is a standard variational problem.
The following proposition gives the regularity result of the minimizer
and its governing equation.

\begin{proposition}{\label{prop:finitT}}
 Assume that $V\in C^7(\mathbb{R}^d)$. For any $T>0$, the functional $S^{\OM}_T[\psi]$
is lower-semicontinuous in $\bar{C}_{\vec{x}_s}^{\vec{x}_f}[0,T]$,
thus attains its minimum in $\bar{C}_M^{x_s,x_f}[0,T]$. Moreover,
under the Assumption \ref{asm:uniformlybounded}, the minimizer $\psi_T\in C^6[0,T]$
and satisfies the Euler-Lagrange equation
  \begin{equation}\label{eq:el0}
    \mathcal{D}L = \pp{L}{\vec{x}}(\psi_T,\dot{\psi}_T)-\frac{\d}{\d t}\pp{L}{\vec{y}}(\psi_T,\dot{\psi}_T)=0,
  \end{equation}
  i.e.
  \begin{equation}\label{eq:EL}
  \begin{cases}
    \ddot{\psi}_T + \nabla U(\psi_T) =0,\\
    \psi_T(0)=\vec{x}_s,\ \psi_T(T)=\vec{x}_f.
  \end{cases}
  \end{equation}
\end{proposition}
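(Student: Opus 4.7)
The plan is to apply the direct method of the calculus of variations to establish lower semicontinuity and existence, then extract the Euler-Lagrange equation by a standard first-variation argument and bootstrap the regularity using the smoothness of $U$ inherited from $V \in C^7$.

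For the lower semicontinuity, the Lagrangian $L(\vec{x},\vec{y}) = \frac{1}{2}|\vec{y}|^2 - U(\vec{x})$ is continuous in $\vec{x}$ and strictly convex in $\vec{y}$, so the classical Tonelli--Serrin result gives that whenever $\psi_n \to \psi$ uniformly with $\dot\psi_n \rightharpoonup \dot\psi$ in $L^2$, one has $S^{\OM}_T[\psi] \leqslant \liminf_n S^{\OM}_T[\psi_n]$. For existence, I take a minimizing sequence $\{\psi_n\} \subset \bar{C}_{\vec{x}_s}^{\vec{x}_f}[0,T]$ and use Assumption \ref{asm:decay} (which, together with the growth of $V$ implicit in it, gives $\sup_{\vec{x}} U < \infty$) to obtain the coercive bound $S^{\OM}_T[\psi_n] \geqslant \frac{1}{2}\int_0^T |\dot\psi_n|^2 \d t - T \sup U$. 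This yields a uniform $L^2$ bound on $\dot\psi_n$; Cauchy--Schwarz then gives equicontinuity and the fixed initial value gives uniform boundedness, so Arzel\`a--Ascoli extracts a uniformly convergent subsequence with (after further extraction) weakly-$L^2$-convergent derivatives. The LSC step then identifies the limit as a minimizer $\psi_T$. Once the Euler--Lagrange equation is derived (below), the conservation law $\frac{1}{2}|\dot\psi_T|^2 + U(\psi_T) \equiv \mathrm{const}$ combined with Corollary \ref{col:bound} (which invokes Assumption \ref{asm:uniformlybounded}) confines $\psi_T$ to a bounded region, yielding the pointwise bound $|\dot\psi_T| \leqslant M$ that places $\psi_T$ in $\bar{C}_M^{\vec{x}_s,\vec{x}_f}[0,T]$ for a suitable $M$.

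For the Euler--Lagrange equation and regularity, I take a smooth variation $\eta$ vanishing at the endpoints, differentiate $S^{\OM}_T[\psi_T + s\eta]$ at $s=0$, and obtain $\int_0^T [\dot\psi_T \cdot \dot\eta - \nabla U(\psi_T) \cdot \eta]\d t = 0$. The DuBois--Reymond lemma then makes $\dot\psi_T$ absolutely continuous with distributional derivative $-\nabla U(\psi_T)$. Because \eqref{eq:GU} together with $V \in C^7$ gives $U \in C^5$ and hence $\nabla U \in C^4$ (the two-derivative loss comes through $\laplace V$), a standard ODE bootstrap on $\ddot\psi_T = -\nabla U(\psi_T)$ successively promotes $\psi_T$ from $C^0$ to $C^2$, $C^4$ and $C^6$, at which point the regularity of $\nabla U$ itself becomes the bottleneck; the system \eqref{eq:EL} then holds classically with the prescribed boundary conditions.

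The principal technical obstacle is the regularity accounting: one must carefully track how the hypothesis $V \in C^7$ forces exactly $\nabla U \in C^4$ so that the bootstrap terminates cleanly at $C^6$ and no higher, matching the claim of the proposition. A second delicate point is that the pointwise bound needed to place $\psi_T$ in $\bar{C}_M$ is not directly available from the variational inequality itself; it has to be derived a posteriori via energy conservation together with the length bound supplied by Assumption \ref{asm:uniformlybounded}, so the parts of the statement must be proved in the logically coupled order indicated above rather than all at once.
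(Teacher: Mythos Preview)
The paper does not give its own proof of this proposition; it simply cites Giaquinta--Hildebrandt (Proposition~4, p.~42). Your sketch is precisely the standard Tonelli direct-method argument that such a reference contains, so your approach matches what the paper invokes.

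One minor remark on the logical structure: the paper's phrasing ``thus attains its minimum in $\bar{C}_M^{\vec{x}_s,\vec{x}_f}[0,T]$'' is most naturally read as lower semicontinuity plus the compactness of $\bar{C}_M$ already established in Lemma~\ref{lem:compact}, which yields a minimizer in $\bar{C}_M$ directly. Your route---coercivity in the full space, then a posteriori membership in $\bar{C}_M$ via energy conservation and Assumption~\ref{asm:uniformlybounded}---works and in fact establishes slightly more (a global minimizer over all of $\bar{C}_{\vec{x}_s}^{\vec{x}_f}[0,T]$), but the shortcut through Lemma~\ref{lem:compact} is what the connector ``thus'' suggests and avoids the coupling you flag as delicate. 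The regularity bootstrap from $V\in C^7$ through $U\in C^5$, $\nabla U\in C^4$, and hence $\psi_T\in C^6$ is exactly right.
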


This proposition ensures the existence and smoothness of the minimizer in a proper function space. Its proof can be found in \cite{giaquinta2013calculus} (Proposition 4 in page 42).

\begin{remark} For general systems with a
drift $\vec{b}(\vec{x})$, the minimizer $\psi_T$ satisfies the Euler-Lagrange equation
  \begin{equation}\label{eq:ELNG}
    \ddot{\psi}_T + (\nabla\vec{b}^\T - \nabla \vec{b})\cdot\dot{\psi}_T + \nabla U(\psi_T) =0,
  \end{equation}
  where $(\nabla\vec{b})_{ij}=\partial b_i/\partial x_j$.
\end{remark}

It is a classical result that the energy of the system is conserved along
the path $\psi_T$, i.e.
\begin{equation}\label{eq:conservation}
  \frac{1}{2}|\dot{\psi}_T|^2 + U(\psi_T)\equiv E,\quad \forall t\in [0,T].
\end{equation}
With the Assumption \ref{asm:unique}, the constant $E$
is uniquely determined by the initial and terminal states $\vec{x}_s, \vec{x}_f$
and the transition time $T$.
For fixed $\vec{x}_s$ and $\vec{x}_f$, $E$ is a function of $T$ only.
In this case, $E$ and $T$ are connected by the equation:
\begin{equation}\label{eq:TandE0}
  T = \int_{\gamma(\psi_T)}\frac{\abs{\d\psi_T}}{\sqrt{2K_{E(T)}(\psi_T)}},
\end{equation}
where $K_E(\psi)=E-U(\psi)$ is the kinetic energy,  $\gamma(\psi_T)$ is the graph of $\psi_T$.

\subsection{Maupertuis principle}

In many cases, we are mainly interested in the graph of the transition path
in the configuration space rather than how it is parameterized by time $t$.
It is well known that the Maupertuis variational principle, which is equivalent to the
Hamilton's variational principle, is more convenient for
such representations \cite{Rota1990Mathematical, landau1972mechanics}.
\begin{theorem}[Maupertuis Principle]
Up to a reparameterization of the path, the minimizer of variational problem
  \begin{equation}\label{eq:vp1}
    \inf\limits_{\psi\in \bar{C}_{\vec{x}_s}^{\vec{x}_f}[0,T]}S_T^{\OM}[\psi]
  \end{equation}
  is an extremal of the functional
\begin{equation}\label{eq:vp2}
  S_0[\psi] = \int_{\gamma(\psi)}\vec{p}(\psi,\dot{\psi})\cdot\d\psi
\end{equation}
subject to the constraint  $H(\psi,\vec{p})\equiv E(T)$, where $H$ is the corresponding Hamiltonian
  \begin{equation}\label{eq:Hamiltonian}
    H(\vec{x},\vec{p}) = \frac{1}{2}\abs{\vec{p}}^2 + U(\vec{x}),
  \end{equation}
 and $\vec{p}(\psi,\dot\psi) = \pp{L}{\vec{y}}(\psi,\dot\psi) = \dot\psi$  is the momentum.
\end{theorem}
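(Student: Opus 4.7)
The plan is to exploit the Legendre duality between the Lagrangian and Hamiltonian formulations, then use the conservation of energy along extremals to rewrite the OM functional in terms of the abbreviated action. First I would observe that for the OM Lagrangian $L(\vec{x},\vec{y}) = \frac{1}{2}|\vec{y}|^2 - U(\vec{x})$, the conjugate momentum is $\vec{p} = \partial L/\partial \vec{y} = \vec{y}$, and the Legendre transform produces precisely the Hamiltonian $H(\vec{x},\vec{p}) = \frac{1}{2}|\vec{p}|^2 + U(\vec{x})$ of the theorem. The pointwise identity $L = \vec{p}\cdot\dot\psi - H$ integrates to
\begin{equation*}
S_T^{\OM}[\psi] = \int_{\gamma(\psi)} \vec{p}\cdot \d\psi - \int_0^T H(\psi,\vec{p})\,\d t.
\end{equation*}

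Next, I would invoke Proposition \ref{prop:finitT} to obtain smoothness and the Euler-Lagrange equation for the minimizer $\psi_T$, together with the conservation law \eqref{eq:conservation}, which forces $H(\psi_T,\dot\psi_T) \equiv E(T)$. Substituting into the previous display yields the key decomposition $S_T^{\OM}[\psi_T] = S_0[\psi_T] - E(T)\,T$. Moreover, on the constraint $H = E$ one has $|\vec{p}| = \sqrt{2(E - U(\psi))}$, so that $S_0[\psi]$ reduces to the purely geometric line integral $\int_{\gamma(\psi)} \sqrt{2(E - U)}\,|\d\psi|$; this is manifestly reparameterization-invariant and justifies the clause ``up to a reparameterization'' in the statement.

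It remains to show that the graph of $\psi_T$ is a critical point of $S_0$ restricted to the constraint manifold $H \equiv E(T)$. For this I would consider one-parameter admissible families $\psi_\lambda \in \bar{C}_{\vec{x}_s}^{\vec{x}_f}[0,T_\lambda]$ with $\psi_0 = \psi_T$, along which the energy constraint is enforced pointwise, so that $T_\lambda$ is determined by $\psi_\lambda$ through the relation \eqref{eq:TandE0}. Differentiating $S_0[\psi_\lambda] = S_{T_\lambda}^{\OM}[\psi_\lambda] + E(T)\,T_\lambda$ at $\lambda = 0$, the interior variation of $S_{T_\lambda}^{\OM}$ vanishes by the Euler-Lagrange equation \eqref{eq:EL}, and the endpoint contributions arising from the moving upper limit $T_\lambda$ cancel against $E(T)\,\d T_\lambda/\d\lambda$ precisely because of the Legendre identity $L + E = \vec{p}\cdot\dot\psi$ evaluated at the endpoints, combined with the fact that the spatial endpoints $\vec{x}_s,\vec{x}_f$ are held fixed.

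The main obstacle I anticipate is the careful bookkeeping of the boundary terms produced by the variable integration interval $[0,T_\lambda]$ together with the pointwise constraint $H = E$. A cleaner alternative is to pass to the Hamiltonian side entirely: $S_0$ is then the integral of the canonical one-form $\vec{p}\cdot\d\psi$ along the phase-space lift of the curve, and its critical points on $\{H = E\}$ are exactly the integral curves of the Hamiltonian vector field restricted to that level set, which in turn correspond to Euler-Lagrange solutions up to the time reparameterization $|\dot\psi| = \sqrt{2(E - U(\psi))}$ dictated by the energy constraint. Either route recovers both the extremal property of the graph of $\psi_T$ and the reparameterization clause in the theorem.
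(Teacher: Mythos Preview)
The paper does not supply its own proof of this theorem: it is stated as a classical result with citations to \cite{Rota1990Mathematical, landau1972mechanics}, and the subsequent text simply unpacks its consequences (the normalized arc-length reparameterization, the geometric functional $\hat S_E$, and the identity $S_T^{\OM}[\psi_T]=\hat S_{E(T)}[\phi_{E(T)}]-E(T)T$). So there is nothing to compare your argument against at the level of the paper itself.

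Your outline is the standard textbook derivation and is correct in substance. The Legendre identity $L=\vec p\cdot\dot\psi-H$, the energy conservation $H\equiv E(T)$ along the minimizer from \eqref{eq:conservation}, and the resulting decomposition $S_T^{\OM}[\psi_T]=S_0[\psi_T]-E(T)T$ are exactly what the paper records in \eqref{eq:ST=S0-ET}. Your variation argument with a moving upper limit $T_\lambda$ under the constraint $H\equiv E$ is the approach in Landau--Lifshitz; the endpoint bookkeeping you flag is genuine but routine, since the boundary term from varying $T_\lambda$ is $L(\psi_T(T),\dot\psi_T(T))\,\d T_\lambda/\d\lambda$, which combines with $E\,\d T_\lambda/\d\lambda$ to give $\vec p\cdot\dot\psi\,\d T_\lambda/\d\lambda$, and this is absorbed into $\d S_0/\d\lambda$ because the spatial endpoint $\vec x_f$ is fixed. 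Your alternative route via integral curves of the Hamiltonian field on $\{H=E\}$ is the argument in Arnold's \emph{Mathematical Methods of Classical Mechanics} and is indeed cleaner; either is acceptable for what the paper is invoking.
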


The functional $S_0[\psi]$ is called the abbreviated action functional
or effective functional \cite{landau1972mechanics, lee2017finding,wang2010kinetic}.
Note that the value of $S_0$ does not depend on how
$\gamma(\psi)$ is parameterized.  A convenient way is to parameterize it using the normalized arclength.
The curve with this parameterization is denoted by $\phi(\alpha)$,
which is an element of $\bar{C}[0,1]$,
such that $\phi(0)=\vec{x}_s$, $\phi(1)=\vec{x}_f$
and $\abs{\phi'(\alpha)}\equiv \text{const}$ for $\alpha\in [0,1]$.
For the Hamiltonian \eqref{eq:Hamiltonian} under the constraint
$H(\phi,\vec{p})\equiv E$  and using the normalized arclength parameterization
for the curve, the abbreviated action functional becomes
\begin{equation}\label{hse}
\hS{E}[\phi] = \begin{cases}
\int_0^1\sqrt{2K_E(\phi)}\abs{\phi'}\d\alpha, & \mbox{if } \phi\in \bar{C}[0,1], \,U(\phi)\leqslant E \text{ for }\alpha\in [0,1],\\
+\infty, & \mbox{otherwise}.
\end{cases}
\end{equation}

More specifically, suppose the minimizer of $S^{\OM}_T[\psi]$ is $\psi_T$,
we define the new parameter $\alpha=\ell(t)$ as
\begin{equation}\label{eq:repara}
  \ell(t) = \frac{1}{L}\int_{0}^{t}|\dot{\psi}_T(s)|\d s, \quad \text{where }L = \int_{0}^{T}|\dot{\psi}_T|\d t.
\end{equation}
Let $\ell^{-1}(\alpha) = \inf\{t\in[0,T]|\ell(t)\geqslant\alpha\}\in [0,T]$, then by the
Maupertuis principle,
\[\phi_{E(T)}(\alpha) = \psi_T(\ell^{-1}(\alpha))\]
 is an extremal of $\hS{E(T)}[\phi]$. Note that $\dot{\ell}(t) = L^{-1}|\dot{\psi}_T|$ and $\psi_T\in C^2[0,T]$. We have that $\ell^{-1}(\alpha)$ is also twice differentiable when $|\dot{\psi}_T|\neq 0$ and
\[\abs{\dd{\phi_E}{\alpha}} = |\dot{\psi}_T|\dd{t}{\alpha} = L.\]
If $\psi_T$ satisfies the Assumption \ref{asm:uniformlybounded}, $\phi_E\in \bar{C}_M[0,1]$ with $M\geqslant L$. By the equation\eqref{eq:conservation}, $|\dot{\psi}_T|=0$ iff $E(T) = U(\psi_T(t))$, so $\phi_E(\alpha)$ is twice differentiable where $U(\phi_E(\alpha))<E$.

We emphasize that even when $\psi_T$ is the minimizer of $S^{\OM}_T[\psi]$,
$\phi_{E(T)}$ need not to be a minimizer of $\hS{E(T)}[\phi]$ but only an extremal.
An illustrative example will be discussed in Section \ref{sec:Exam}.

Denote the Lagrangian of $\hS{E}[\phi]$ by
\begin{equation}\label{eq:L0}
  L_E(\vec{x},\vec{y}) = \sqrt{2K_E(\vec{x})}\abs{\vec{y}}.
\end{equation}
 When $\vec{y}\neq 0$, $\partial L_E/\partial \vec{y}$ is well-defined. The extremal of $\hS{E}[\phi]$, denoted by $\phi_E$, satisfies the Euler-Lagrange equation in a weak form
 \begin{equation}\label{eq:wel}
   \int_{0}^{1}\pp{L_E}{\vec{y}}\cdot\Phi'\d\alpha + \int_{0}^{1}\pp{L_E}{\vec{x}}\cdot\Phi\d\alpha=0,\quad \forall \Phi\in C_0^{\infty}[0,1].
 \end{equation}
Using the specific form of $L_E$, we get
 \begin{equation}\label{eq:weakEL}
   \int_{0}^{1}\sqrt{2K_E(\phi)}\frac{\phi'}{\abs{\phi'}}\cdot\Phi'\d\alpha  - \int_{0}^{1}\frac{\abs{\phi'}\nabla U(\phi)\cdot\Phi}{\sqrt{2K_E(\phi)}}\d\alpha=0.
 \end{equation}
 When $U(\phi_E)<E$, $\phi_E\in C^2$, we have the strong form of the Euler-Lagrange equation
\begin{align}\label{eq:EL0}
& 2K_E(\phi) \phi''  +\abs{\phi'}^2 (I-\hat\tau\otimes\hat\tau)\cdot\nabla U(\phi)=0,\quad \alpha\in(0,1), \\
&\phi(0)=\vec{x}_s ,\  \phi(1)=\vec{x}_f \quad \text{and}\quad (\abs{\phi'})'=0,
\end{align}
where $\hat\tau=\phi'/\abs{\phi'}$ and $U(\phi)< E$.

Moreover, Eq. \eqref{eq:TandE0} can be written as
\begin{equation}\label{eq:TandE}
  T=\int_{0}^{1}\frac{|\phi_{E(T)}'|}{\sqrt{2K_{E(T)}(\phi_{E(T)})}}\d\alpha,
\end{equation}
and the infimum of the OM functional $S^{\OM}_T[\psi]$ is given by
\begin{align}\label{eq:ST=S0-ET}
  S^{\OM}_T[\psi_T] = & \int_{0}^{T}
\left(\frac{1}{2}|\dot{\psi}_T|^2 - U(\psi_T)\right) \d t \nonumber\\
              = & \int_{0}^{T}|\dot{\psi}_T|^2\d t -E(T)T \nonumber\\
              = & \int_{0}^{T}\sqrt{2K_{E(T)}(\psi_T)}|\dot{\psi}_T|\d t -E(T)T \nonumber\\
              = & \hS{E(T)}[\phi_{E(T)}] -E(T)T.
\end{align}
The connection between  $S^{\OM}_T[\psi_T]$ and $\hS{E(T)}[\phi_{E(T)}]$ in \eqref{eq:ST=S0-ET}
is essential for later analysis.

\begin{remark}
 The above results can be generalized to non-gradient systems.
For a general system with a drift $\vec{b}$, we have the Lagrangian $L(\vec{x},\vec{y}) = \frac{1}{2}\abs{\vec{y}}^2 - \vec{b}(\vec{x})\cdot\vec{y} - U(\vec{x})$ and the corresponding Hamiltonian $H(\vec{x},\vec{p})=\frac{1}{2}\abs{\vec{p}+\vec{b}(\vec{x})}^2+U(\vec{x})$. The Maupertuis principle also holds with the geometric functional
  \begin{equation}\label{NGhse}
  \hS{E}[\phi] = \begin{cases}
                   \int_0^1 (\sqrt{2K_E(\phi)}\abs{\phi'}
- \vec{b}\cdot\phi')\d\alpha, & \mbox{if } \phi\in \bar{C}[0,1] \text{ and }U(\phi)\leqslant E,\\
                   +\infty, & \mbox{otherwise}.
                 \end{cases}
\end{equation}
With the Lagrangian $L_E(\vec{x},\vec{y}) = \sqrt{2K_E(\vec{x})}\abs{\vec{y}}-\vec{b}(\vec{x})\cdot\vec{y}$, Eqs.~\eqref{eq:wel}, \eqref{eq:TandE} and \eqref{eq:ST=S0-ET} still hold. The strong form of the Euler-Lagrange equation is
\begin{equation}\label{eq:NGEL0}
  2K_E(\phi)\phi'' +\sqrt{2K_E(\phi)}(\nabla\vec{b}^\T-\nabla\vec{b})\phi' + \abs{\phi'}^2(I-\hat\tau\otimes\hat\tau)\cdot\nabla U(\phi) =0
\end{equation}
with the constraint $\abs{\phi'}=\text{const}$ and $U(\phi)< E$.
\end{remark}

\subsection{Minimization when $T$ goes to infinity}\label{sec:minT}

Given $\psi_T$, the minimizer of $S^{\OM}_T[\psi]$, the value of $S^{\OM}_T[\psi_T]$
can be viewed as a function of $T$.
 As we will see below,  taking the limit of $S^{\OM}_T[\psi_T]$ as
 $T\rightarrow\infty$ is equivalent to minimizing the OM functional
with respect to $\psi$ followed by the minimization with respect to $T>0$.
We will follow the double minimization approach,
which provides insights to the numerical results obtained
in \cite{pinski2010transition} and also facilitates comparisons with FW functional.
The double minimization problem \eqref{eq:doublemin}
reduces to a minimization with respect to $T$:
$S(\vec{x}_s,\vec{x}_f)=\inf_T S^{\OM}_T[\psi_T]$.
We have:
\begin{proposition}\label{prop:pspt}
  \[\pp{\hS{E}[\phi_E]}{E} = T,\quad \pp{S^{\OM}_T[\psi_T]}{T}=-E(T).\]
\end{proposition}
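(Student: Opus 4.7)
The plan is to derive both identities from a single envelope-type computation, combined with the representation \eqref{eq:ST=S0-ET} already established via the Maupertuis principle.

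For the first identity, I would differentiate $\hS{E}[\phi_E]$ in $E$ by the chain rule. Writing $L_E(\vec{x},\vec{y})=\sqrt{2(E-U(\vec{x}))}|\vec{y}|$, the derivative splits into three pieces: the explicit $\partial_E L_E$ contribution, a piece proportional to $\partial_E \phi_E$, and a piece proportional to $\partial_E \phi'_E$. Integrating the last piece by parts in $\alpha$ and using that the endpoints $\phi_E(0)=\vec{x}_s$, $\phi_E(1)=\vec{x}_f$ are fixed (so $\partial_E\phi_E$ vanishes at $\alpha=0,1$), the latter two pieces combine into $\int_0^1 \mathcal{D}L_E(\phi_E,\phi'_E)\cdot \partial_E\phi_E\,\d\alpha$, which vanishes because $\phi_E$ satisfies the weak Euler--Lagrange equation \eqref{eq:wel}. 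What remains is exactly
\[
\pp{\hS{E}[\phi_E]}{E}=\int_0^1 \pp{L_E}{E}(\phi_E,\phi'_E)\,\d\alpha=\int_0^1 \frac{|\phi'_E|}{\sqrt{2K_E(\phi_E)}}\,\d\alpha=T,
\]
where the last equality is precisely \eqref{eq:TandE}.

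For the second identity, I would differentiate $S^{\OM}_T[\psi_T]=\hS{E(T)}[\phi_{E(T)}]-E(T)T$ from \eqref{eq:ST=S0-ET} with respect to $T$ and substitute the first identity at $E=E(T)$:
\[
\pp{S^{\OM}_T[\psi_T]}{T}=\left.\pp{\hS{E}[\phi_E]}{E}\right|_{E=E(T)}\!\!E'(T)-E'(T)\,T-E(T)=T\cdot E'(T)-E'(T)\,T-E(T)=-E(T),
\]
so the $E'(T)T$ terms cancel cleanly.

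The principal obstacle is justifying the envelope step rigorously: one needs smoothness of the map $E\mapsto\phi_E$ to differentiate under the integral and to integrate by parts. Under Assumption \ref{asm:unique} together with the $C^6$-regularity supplied by Proposition \ref{prop:finitT}, an implicit function theorem argument applied to the Euler--Lagrange system \eqref{eq:EL} (linearized about the non-degenerate minimizer) yields the required smooth dependence on $E$. A secondary subtlety is the integrability of $|\phi'_E|/\sqrt{2K_E(\phi_E)}$ near turning points where $U(\phi_E)=E$; however, under the arclength reparameterization $|\phi'_E|=L$ is constant and $\sqrt{2K_E(\phi_E)}=|\dot\psi_T|$, so the integrand equals $L/|\dot\psi_T|$ and the integral simply reproduces the finite total transit time $T$, confirming that no spurious singularity arises.
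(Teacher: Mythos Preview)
Your argument is correct and follows essentially the same route as the paper: an envelope differentiation of $\hS{E}[\phi_E]$ in which the Euler--Lagrange equation kills the variation-of-$\phi_E$ terms and leaves only $\int_0^1 |\phi'_E|/\sqrt{2K_E}\,\d\alpha=T$, followed by differentiating the relation $S^{\OM}_T[\psi_T]=\hS{E(T)}[\phi_{E(T)}]-E(T)T$ in $T$. The paper presents this as a formal derivation and defers the rigorous justification to the Hamilton--Jacobi literature, so your closing remarks on the smoothness of $E\mapsto\phi_E$ and on the integrability near turning points go somewhat beyond what the paper itself supplies.
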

This is a classical result in the Hamilton-Jacobi theory.
One may refer to \cite{giaquinta2013calculus} for a rigorous proof.
A formal derivation is as follows.
\begin{align}\label{eq:pspt}
  \pp{\hS{E}[\phi_E]}{E} &= \int_{0}^{1}\frac{\abs{\phi_E'}\d\alpha}{\sqrt{2K_E(\phi_E)}} + \int_{0}^{1}\mathcal{D}L_E(\phi_E,\phi_E')\frac{\partial \phi_E}{\partial E}\d\alpha.
\end{align}
Since $\phi_E$ is an extremal of $\hS{E(T)}[\phi]$, we have
$\mathcal{D}L_E(\phi_E,\phi_E')=0$, thus $\pp{\hS{E}[\phi_E]}{E}=T$.
Furthermore, by Eq. \eqref{eq:ST=S0-ET},
\[\pp{S^{\OM}_T[\psi_T]}{T}=\pp{\hS{E}[\phi_E]}{E}\pp{E}{T}-E(T)-T\pp{E}{T}=-E(T).\]

Next we show that $E(T)>0$ when $T$ is sufficiently large.
Then it follows from Proposition \ref{prop:pspt} that $S^{\OM}_T[\psi_T]$
is a decreasing function of $T$ when $T$ is sufficiently large.

\begin{lemma}\label{lem:liminfE}
  \[\liminf_{T\to +\infty}E(T)\geqslant \max_{\vec{x}\in\mathbb{R}^d}U(\vec{x})>0.\]
\end{lemma}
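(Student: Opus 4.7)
The plan is to treat the two assertions separately: first establishing the strict positivity $\max_{\vec{x}} U(\vec{x}) > 0$ directly from Assumption~\ref{asm:potV}, and then proving the main inequality $\liminf_{T\to\infty} E(T) \geqslant \max_{\vec{x}} U(\vec{x})$ by a sandwich argument comparing the minimizer $\psi_T$ against an explicit comparison path that spends almost all of its time sitting at a maximizer of $U$.

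For positivity, I would simply evaluate $U$ at the local minimizer $\vec{x}_m$ of $V$ provided by Assumption~\ref{asm:potV}. Since $\nabla V(\vec{x}_m) = 0$, formula~\eqref{eq:GU} reduces to $U(\vec{x}_m) = \epsilon \laplace V(\vec{x}_m) = \epsilon\,\mathrm{tr}(\nabla^2 V(\vec{x}_m))$, which is strictly positive because the Hessian $\nabla^2 V(\vec{x}_m)$ is strictly positive definite. Together with Assumption~\ref{asm:decay} and the continuity of $U$, this also guarantees that the supremum is attained at some point $\vec{x}^\star$ with $U(\vec{x}^\star) = \max_{\vec{x}} U(\vec{x}) > 0$.

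For the liminf bound I would sandwich $S^{\OM}_T[\psi_T]$ from below and from above. The lower bound is immediate from the identity~\eqref{eq:ST=S0-ET}: the conservation law~\eqref{eq:conservation} forces $U(\psi_T(t)) \leqslant E(T)$ along the minimizer, hence $K_{E(T)}(\phi_{E(T)}) \geqslant 0$ and $\hS{E(T)}[\phi_{E(T)}] \geqslant 0$, which yields $S^{\OM}_T[\psi_T] \geqslant -E(T)T$. For the upper bound, for every $T \geqslant 2$ I would build a piecewise linear admissible path $\tilde\psi_T \in \bar{C}_{\vec{x}_s}^{\vec{x}_f}[0,T]$ that runs linearly from $\vec{x}_s$ to $\vec{x}^\star$ on $[0,1]$, stays fixed at $\vec{x}^\star$ on $[1,T-1]$, and runs linearly from $\vec{x}^\star$ to $\vec{x}_f$ on $[T-1,T]$. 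A direct computation gives $S^{\OM}_T[\tilde\psi_T] \leqslant -(\max U)\,T + C_0$, where $C_0 = C_0(\vec{x}_s, \vec{x}_f, \vec{x}^\star)$ absorbs the two kinetic contributions $\frac{1}{2}|\vec{x}^\star - \vec{x}_s|^2$, $\frac{1}{2}|\vec{x}^\star - \vec{x}_f|^2$ and the two potential integrals on the transient segments, and is independent of $T$ because those segments live on fixed compact line segments. By minimality of $\psi_T$ we have $S^{\OM}_T[\psi_T] \leqslant S^{\OM}_T[\tilde\psi_T]$, so combining with the lower bound yields $E(T) \geqslant \max U - C_0/T$, and letting $T \to \infty$ closes the argument.

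The argument is essentially a direct computation, so no serious obstacle is expected; the only point requiring care is verifying that the constant $C_0$ in the upper bound is genuinely $T$-independent, which ultimately rests on the coarse feature that the two nontrivial pieces of $\tilde\psi_T$ live on fixed compact line segments determined by $\vec{x}_s$, $\vec{x}_f$, and $\vec{x}^\star$ alone. No compactness or $\Gamma$-convergence machinery is needed here; those tools will come into play only later, when the convergence of the graph minimizers themselves is studied.
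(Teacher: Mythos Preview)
Your proof is correct and takes a genuinely different, more elementary route than the paper's. The paper argues by contradiction: assuming $E_m := \liminf E(T) < E_{\max}$, it passes to a subsequence $T_k$ with $E(T_k)\to E_m$, then builds a competitor $\hat\psi$ by concatenating the minimizer from $\vec{x}_s$ to a point $\vec{x}_c$ with $U(\vec{x}_c)$ close to $E_{\max}$ on $[0,T_k/2]$ with the minimizer from $\vec{x}_c$ to $\vec{x}_f$ on $[T_k/2,T_k]$. Each piece has energy $\tilde E_i > E_{\max}-\Delta E/2$, and applying the decomposition $S^{\OM}=\hat S_E-ET$ to all three paths shows $S^{\OM}_{T_k}[\hat\psi]-S^{\OM}_{T_k}[\psi_k]\le C-\tfrac14\Delta E\,T_k<0$ for large $k$, contradicting minimality.

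Your sandwich argument is more direct and yields the quantitative bound $E(T)\geqslant \max U - C_0/T$, not just the liminf statement. It also uses strictly less: the paper needs Assumption~\ref{asm:uniformlybounded} to bound the geometric actions $\hat S_{\tilde E_i}[\tilde\phi_i]$ and $\hat S_{E_k}[\phi_k]$ uniformly, whereas your argument does not need that assumption at all---indeed the lower bound $S^{\OM}_T[\psi_T]\geqslant -E(T)T$ follows immediately from the conservation law via $\tfrac12|\dot\psi_T|^2-U(\psi_T)=|\dot\psi_T|^2-E(T)\geqslant -E(T)$, without even invoking \eqref{eq:ST=S0-ET} or the Maupertuis reparametrization. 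The only (minor) advantage of the paper's route is that it stays entirely within the geometric-action framework set up for the rest of Section~\ref{sec:MinOM}.
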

\begin{proof}By Assumption \ref{asm:potV}, we have $\max_{\vec{x}\in\mathbb{R}^d}U(\vec{x})\geqslant U(\vec{x}_m)=\epsilon\laplace V(\vec{x}_m)>0$. Denote $E_m=\liminf_{T\to +\infty}E(T)$, $E_{\rm max} = \max_{\vec{x}\in\mathbb{R}^d}U(\vec{x})$, then
there exists a sequence $T_k\to +\infty$ such that $E_k=E(T_k)\to E_m$.

We will prove the result by contradiction. If $\Delta E = E_{\rm max}-E_m>0$,  we have $E_{\rm max}-E_{k}>\frac{3}{4}\Delta E>0$ when $k$ is large enough. Denote
the minimizer of $S^{\OM}_{T_k}[\psi]$ by $\psi_k$,
and the corresponding extremal of $\hS{E_k}[\phi]$ that has the same
graph as $\psi_k$ by $\phi_k$. We have
\[U(\phi_{k})\leqslant E_{k}< E_{\rm max}-\frac{3}{4}\Delta E\]
for any sufficiently large $k$. This implies that
$\phi_{k}(\alpha)\notin S = \{\vec{x}|E_{\rm max}-\Delta E/2<U(\vec{x})\leqslant E_{\rm max}\}$
for any $\alpha\in[0,1]$. Let us choose a point $\vec{x}_c$ from the set $S$.
For $T_{k}$ sufficiently large,
let $\tilde{\psi}_1(t)\in \bar{C}_M^{\vec{x}_s,\vec{x}_c}[0,T_{k}/2]$ be
the minimizer of $S^{\OM}_{T_{k}/2}[\psi]$ with $\psi(0)=\vec{x}_s$, $\psi(T_{k}/2)=\vec{x}_c$.
The energy conservation \eqref{eq:conservation} yields
\[\big|\dot{\tilde{\psi}}_1\big|^2 +2U(\tilde{\psi_1})\equiv 2\tilde{E}_1\]
for some constant $\tilde{E}_1$. Clearly,
$\tilde{E}_1\geqslant U(\vec{x}_c)>E_{\rm max}-\Delta E/2$.
By the Maupertuis principle, there exists an extremal
$\tilde{\phi}_1\in \bar{C}_M[0,1]$ of $\hS{\tilde{E}_1}[\phi]$
which has the same graph as $\tilde{\psi}_1$. Eq.~\eqref{eq:ST=S0-ET} gives
\[S^{\OM}_{T_{k}/2}[\tilde{\psi}_1]
= \hS{\tilde{E}_1}[\tilde{\phi}_1]-\frac{1}{2}\tilde{E}_1T_{k}.\]

Similarly, let $\tilde{\psi}_2\in \bar{C}_M^{\vec{x}_c,\vec{x}_f}[0,T_{k}/2]$ be
the minimizer of $S^{\OM}_{T_{k}/2}[\psi]$
with $\psi(0)=\vec{x}_c$ and $\psi(T_{k}/2)=\vec{x}_f$.
The energy along $\tilde{\psi}_2$ satisfies $\tilde{E}_2>E_{\rm max}-\Delta E/2$.
Let $\tilde{\phi}_2$ be the corresponding extremal of $\hS{\tilde{E}_2}[\phi]$
which has the same graph as $\tilde{\psi}_2$. We have
\[S^{\OM}_{T_{k/2}}[\tilde{\psi}_2]
= \hS{\tilde{E}_2}[\tilde{\phi}_2]-\frac{1}{2}\tilde{E}_2T_{k}.\]

Define $\hat{\psi}\in \bar{C}_M[0,T_{k}]$ as
\begin{equation}\label{eq:hatpsi}
  \hat{\psi}(t)=\begin{cases}
                  \tilde{\psi}_1(t), & \mbox{if } 0\leqslant t\leqslant \frac{1}{2}T_{k}, \\
                  \tilde{\psi}_2(t-\frac{1}{2}T_{k}), & \mbox{if } \frac{1}{2}T_{k}\leqslant t\leqslant T_{k}.
                \end{cases}
\end{equation}
We have
\begin{equation}\label{eq:Shat-Sk}
\begin{aligned}
  S^{\OM}_{T_{k}}[\hat{\psi}] - S^{\OM}_{T_{k}}[\psi_k]
=& \hS{\tilde{E}_1}[\tilde{\phi}_1] + \hS{\tilde{E}_2}[\tilde{\phi}_2] - \hS{E_{k}}[\phi_{k}] - \frac{1}{2}(\tilde{E}_1 + \tilde{E}_2 - 2E_{k})T_{k}\\
 \leqslant &\int_{0}^{1}\sqrt{2\tilde{E}_1-2U(\tilde{\phi}_1)}\abs{\tilde{\phi}_1'}\d\alpha + \int_{0}^{1}\sqrt{2\tilde{E}_2-2U(\tilde{\phi}_2)}\abs{\tilde{\phi}_2'}\d\alpha\\
   & -\int_{0}^{1}\sqrt{2E_{k}-2U(\phi_k)}\abs{\phi_k'}\d\alpha - \frac{1}{4}\Delta E T_{k}.
\end{aligned}
\end{equation}
Since $\tilde{\phi}_1, \tilde{\phi}_2, \phi_k\in \bar{C}_M[0,1]$, $\hS{\tilde{E}_1}[\tilde{\phi}_1]$, $\hS{\tilde{E}_2}[\tilde{\phi}_2]$ and $\hS{E_{k}}[\phi_{k}]$ are all uniformly bounded. Thus
\[S_{T_{k}}[\hat{\psi}] - S_{T_{k}}[\psi_k]\leqslant C - \frac{1}{4}\Delta ET_{k}<0\]
when $T_{k}$ is sufficiently large, where $C$ is a generic constant. This contradicts with the
assumption that $\psi_{k}$ is the minimizer of $S_{T_{k}}[\psi]$.
\end{proof}

Proposition \ref{prop:pspt} and Lemma \ref{lem:liminfE} show
that $S^{\OM}_T[\psi_T]$ is a decreasing function of $T$
with a positive decreasing rate when $T$ is sufficiently large.
Thus  $S^{\OM}_T[\psi_T]\rightarrow - \infty$ as $T\rightarrow +\infty$.
Next we show that the infimum of $S^{\OM}_T[\psi_T]$ occurs only when $T\to +\infty$.

\begin{proposition}\label{prop:D}
$S^{\OM}_T[\psi_T]$ assumes its infimum only when $T\to +\infty$. Moreover, we have
\[\inf\limits_{T>0}\inf\limits_{\psi\in \bar{C}_{\vec{x}_s}^{\vec{x}_f}[0,T]}S^{\OM}_T[\psi] = \lim\limits_{T\to +\infty}\inf\limits_{\psi\in \bar{C}_{\vec{x}_s}^{\vec{x}_f}[0,T]}S^{\OM}_T[\psi]=-\infty.\]
\end{proposition}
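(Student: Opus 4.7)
The plan is to combine the derivative formula $\partial_T S^{\OM}_T[\psi_T]=-E(T)$ from Proposition \ref{prop:pspt} with the lower bound on $E(T)$ from Lemma \ref{lem:liminfE} to show that $S^{\OM}_T[\psi_T]$ decreases linearly (in fact with a strictly positive rate bounded away from zero) for all sufficiently large $T$, and hence diverges to $-\infty$. Since for every finite $T>0$ the functional $S^{\OM}_T[\psi_T]$ is finite by Proposition \ref{prop:finitT} (the minimizer exists and is smooth), divergence to $-\infty$ in the limit forces the infimum to be $-\infty$ and precludes its attainment at any finite time.

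More concretely, I would first invoke Lemma \ref{lem:liminfE} to select $T_0>0$ such that
\begin{equation*}
E(T)\geqslant \tfrac{1}{2}\max_{\vec{x}\in\mathbb{R}^d}U(\vec{x})=:c>0\qquad\text{for all }T\geqslant T_0,
\end{equation*}
which is possible because $\max_{\vec{x}}U(\vec{x})>0$ (as already used in the proof of that lemma, via Assumption \ref{asm:potV}). Using Proposition \ref{prop:pspt} and integrating from $T_0$ to $T$,
\begin{equation*}
S^{\OM}_T[\psi_T]=S^{\OM}_{T_0}[\psi_{T_0}]-\int_{T_0}^{T}E(s)\,\mathrm{d}s\leqslant S^{\OM}_{T_0}[\psi_{T_0}]-c(T-T_0).
\end{equation*}
Letting $T\to+\infty$ yields $\lim_{T\to+\infty}S^{\OM}_T[\psi_T]=-\infty$.

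Combining this with the fact that, for each fixed $T>0$, $\inf_{\psi\in\bar{C}_{\vec{x}_s}^{\vec{x}_f}[0,T]}S^{\OM}_T[\psi]=S^{\OM}_T[\psi_T]$ is a finite real number (Proposition \ref{prop:finitT} plus Assumption \ref{asm:unique}), the infimum over $T>0$ equals $-\infty$ and cannot be realized at any finite $T$; it is assumed only in the limit $T\to+\infty$. The only subtle point, and really the only nontrivial ingredient, is the uniform positive lower bound on $E(T)$ for large $T$—and that has already been established in Lemma \ref{lem:liminfE}. Differentiability of $T\mapsto S^{\OM}_T[\psi_T]$ on $(T_0,\infty)$, needed to justify the integration above, follows from Assumption \ref{asm:unique} and the classical Hamilton–Jacobi envelope-type arguments underlying Proposition \ref{prop:pspt}; if one prefers to avoid any regularity issue of $T\mapsto E(T)$, one can instead build a direct competitor by the same concatenation construction used in the proof of Lemma \ref{lem:liminfE} (splice two half-paths through an intermediate point where $U$ is close to its maximum) to show that $S^{\OM}_T[\psi_T]\to-\infty$ at rate at least linear in $T$.
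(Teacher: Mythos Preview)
Your proof is correct and slightly more streamlined than the paper's. The paper proceeds by a three–regime analysis of $T$: it first shows (via the relation \eqref{eq:TandE} and the mean value theorem) that $E(T)\to+\infty$ as $T\to 0^+$, hence $S^{\OM}_T[\psi_T]$ is decreasing near $T=0$ and cannot attain its infimum there; then it bounds $S^{\OM}_T[\psi_T]$ on a compact middle interval $[T_m,T_M]$; and finally, for $T>T_M$, it uses the decomposition $S^{\OM}_T[\psi_T]=\hat S_{E(T)}[\phi_{E(T)}]-E(T)T$ together with a uniform bound on $\hat S_{E(T)}[\phi_{E(T)}]$ to conclude $S^{\OM}_T[\psi_T]\to-\infty$. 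You bypass all this by integrating the derivative formula from Proposition~\ref{prop:pspt} directly, and you only need that $S^{\OM}_T[\psi_T]$ is a finite real number at each fixed $T$ (which follows from Proposition~\ref{prop:finitT}) to rule out attainment at finite $T$. Your route is shorter and conceptually cleaner for the stated conclusion; the paper's route, on the other hand, yields a bit more structural information (the behavior near $T=0$ and explicit bounds via $\hat S_E$) and avoids needing any regularity of $T\mapsto E(T)$ beyond what is already packaged into Proposition~\ref{prop:pspt}. Your acknowledgment of this regularity point, and the suggested alternative via the concatenation competitor from Lemma~\ref{lem:liminfE}, are both appropriate.
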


\begin{proof}
By the mean value theorem and Assumption \ref{asm:uniformlybounded},
we derive from Eq. \eqref{eq:TandE} that
  \begin{equation}\label{eq:propD1}
 \begin{aligned}
    \frac{\abs{\vec{x}_s-\vec{x}_f}}{\sqrt{2K_{E}(\psi_T(t_c))}}
\leqslant T & =
\frac{\int_{0}^{T}|\dot{\psi_T}|\d t}{\sqrt{2K_{E}(\psi_T(t_c))}}
\leqslant \frac{M}{\sqrt{2K_{E}(\psi_T(t_c))}}
    \end{aligned}
  \end{equation}
where $K_{E}(\psi_T(t_c)) =E(T)-U(\psi_T(t_c))$ for some $t_c\in [0,T]$. Thus
  \begin{align}\label{eq:propD2}
      U(\psi_T(t_c))   + \frac{\abs{\vec{x}_s -\vec{x}_f}^2}{2T^2}\leqslant E(T)
    \leqslant U(\psi_T(t_c)) + \frac{M^2}{2T^2}.
  \end{align}
Since $\{\psi_T\}$ is uniformly bounded, $\{U(\psi_T)\}$ is also uniformly bounded by a constant $M_U$. Inequality \eqref{eq:propD2} implies $E(T)\to +\infty$ when $T\to 0+$.
By Proposition \ref{prop:pspt}, $S^{\OM}_T[\psi_T]$
is decreasing in a neighborhood of $T=0$. So there exists $T_m>0$,
such that $S^{\OM}_T[\psi_T]$ cannot attain its infimum in $T\in[0,T_m]$.

By Lemma \ref{lem:liminfE}, there exists $T_M>0$
such that when $T>T_M$, $E(T)>\frac{1}{2}E_{\rm max}>0$.
For $T\in [T_m,T_M]$, \eqref{eq:propD2} implies the uniform boundedness of $E(T)$. Denote the upper bound  by $M_E$, lower bound $m_E$. For any path $\phi\in \bar{C}_{M,E}[0,1]$, $\phi$ is uniformly bounded. We have
  \[\hS{E(T)}[\phi_E]\leqslant M\sqrt{2M_E+2M_U},\]
which yields
   \[-M\sqrt{2M_E+M_U}-M_ET_m \leqslant S^{\OM}_T[\psi_T]
\leqslant M\sqrt{2M_E+M_U} - m_ET_M.\]
So  $S^{\OM}_T[\psi_T]$ is bounded in $[T_m,T_M]$.

When $T>T_M$, $S^{\OM}_T[\psi_T]$ is a decreasing function of $T$. So
\[\inf_{T>T_M}\inf_{\psi}S^{\OM}_T[\psi]=\lim_{T\to +\infty}\inf_{\psi}S^{\OM}_T[\psi]\leqslant\lim_{T\to +\infty} M\sqrt{2M_E+2M_U} - \frac{1}{2}E_{\rm max}T= -\infty.\]
Combining with previous facts we obtain
   \[\inf_{T>0}\inf_{\psi}S^{\OM}_T[\psi]=\lim_{T\to +\infty}\inf_{\psi}S^{\OM}_T[\psi]=-\infty.\]
\end{proof}

Proposition \ref{prop:D} shows that the infimum of the
OM functional is always negative infinity. This can be viewed as a special case of the so-called Man\'e potential \cite{contreras1997lagrangian, mane1997lagrangian}.
The Man\'e potential  has a positive Man\'e critical value.
This critical value is given by $E_{\max}=\max_{\vec{x}\in\mathbb{R}^d}U(\vec{x})$
for the OM functional. It can be shown that
for any $C$ below the critical value (e.g. $C=0$ as in the current case),
one has $\inf_{T} \inf_{\psi} S^{\OM}_T[\psi] + CT = -\infty$.
In this sense, we can also call $\hat{S}_E[\phi]$
the \textit{renormalized OM functional} since it removes the divergence term $-E(T)T$ as $T\rightarrow\infty$.

\begin{remark}
Propositions \ref{prop:pspt} and \ref{prop:D} can be generalized to non-gradient case
with slight modifications.
\end{remark}

\subsection{The Graph limit of minimizers of OM functionals}

 Proposition \ref{prop:D} shows that it is inappropriate to define the minimizer $(T^\star, \psi_{T^\star})$ of the original functional $S_T^{\OM}[\psi]$ with time parameterization. However, this limit process could be meaningful if we inspect the transition path sequence in the configuration space instead of using the time coordinate. Next we will study the graph limit of OM minimizers
 as $T\rightarrow +\infty$.

We have shown using the Maupertuis principle that for any $T>0$,
the minimizer $\psi_T$ can be mapped to $\phi\in C[0,1]$ by a reparameterization
such that $\gamma(\psi_T)=\gamma(\phi)$. Moreover, $\phi$ is an extremal
of $\hS{E}[\phi]$ with $E=E(T)$.
In the following, we show that there exist a proper energy parameter
$E^\star$, a path $\phi^\star$ which is an extremal of $\hS{E^\star}[\phi]$
and a subsequence of the minimizer $\left\{\psi_{T_k}\right\}$,
such that $\rho(\psi_{T_k},\phi^\star)\to 0$ when $k\to \infty$.

\begin{theorem}\label{prop:conv}
  Let $\{T_k\}_{k=1}^{\infty}$ be a positive sequence such that $T_k\to +\infty$
when $k\to\infty$. Let $\psi_k$ be the minimizer of the OM functional
$S^{\OM}_{T_k}[\psi]$, $E_k = E(T_k)$, and $\phi_k$ be the extremal of
$\hS{E_k}[\phi]$ which has the same graph as $\psi_k$. We have
  \begin{enumerate}
    \item There exists a subsequence $\{\phi_{k_l}\}$ which uniformly converges to
  $\phi^\star\in \bar{C}_M^{\vec{x}_s,\vec{x}_f}[0,1]$.
    \item The subsequence $\{E_{k_l}\}$ converges to $E^\star=\max_{\vec{x}} U(\vec{x})$
and $\phi^\star\in \bar{C}_{M,E^\star}[0,1]$.
    \item $\phi^\star$ passes through a critical point at some $\alpha_c\in [0,1]$.
    \item $\rho(\psi_{k_l},\phi^\star)\to 0$ when $l\to+\infty$.
    \item For any $\delta>0$, the integral
    \[\int_{\alpha_c-\delta}^{\alpha_c+\delta}\frac{\abs{\phi{^\star}{'}}\d\alpha}
{\sqrt{2E^\star-2U(\phi^\star)}}\]
     diverges. In particular,
     \[T^\star=\int_{0}^{1}\frac{\abs{\phi{^\star}{'}}\d\alpha}
{\sqrt{2E^\star-2U(\phi^\star)}}=+\infty.\]
  \end{enumerate}
\end{theorem}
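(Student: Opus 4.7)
For claim (1), Assumption \ref{asm:uniformlybounded} gives $L_k = \int_0^{T_k}|\dot\psi_k|\d t \leqslant M$, so $|\phi_k'|\equiv L_k\leqslant M$ and $\phi_k\in\bar{C}_M^{\vec{x}_s,\vec{x}_f}[0,1]$; Lemma \ref{lem:compact} then delivers a uniformly convergent subsequence $\phi_{k_l}\to\phi^\star$ in $C[0,1]$. For claim (2), I first show that $\{E_k\}$ is bounded above: since $\{\phi_k\}$ lies in a common compact ball of radius $|\vec{x}_s|+M$ on which $U$ is bounded, $E_k\to+\infty$ would force the integrand in \eqref{eq:TandE} to vanish uniformly, contradicting $T_k\to\infty$. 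Passing to a further subsequence and relabeling, $E_{k_l}\to E^\star$, and Lemma \ref{lem:liminfE} gives $E^\star\geqslant E_{\rm max}:=\max_{\vec{x}}U(\vec{x})$. For the reverse inequality I argue by contradiction: if $E^\star>E_{\rm max}$, then $U(\phi_{k_l})\to U(\phi^\star)\leqslant E_{\rm max}<E^\star$ uniformly makes $2E_{k_l}-2U(\phi_{k_l})$ uniformly bounded below by a positive constant for $l$ large, and \eqref{eq:TandE} then forces $T_{k_l}$ to stay uniformly bounded, a contradiction. Hence $E^\star=E_{\rm max}$, and passing to the limit in the pointwise inequality $U(\phi_k)\leqslant E_k$ yields $\phi^\star\in\bar{C}_{M,E^\star}[0,1]$.

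Claim (3) is a byproduct of the same contradiction: were $U(\phi^\star(\alpha))<E^\star$ uniformly on $[0,1]$, the same reasoning would bound $T_k$, so there must exist $\alpha_c\in[0,1]$ with $U(\phi^\star(\alpha_c))=E_{\rm max}$, placing $\phi^\star(\alpha_c)\in\Lambda$. Claim (4) is then immediate: by construction $\gamma(\phi_{k_l})=\gamma(\psi_{k_l})$, so $\rho(\phi_{k_l},\psi_{k_l})=0$, and the triangle inequality together with the trivial bound $\rho(f_1,f_2)\leqslant\|f_1-f_2\|_\infty$ for equally parametrized curves gives $\rho(\psi_{k_l},\phi^\star)\leqslant\|\phi_{k_l}-\phi^\star\|_{C[0,1]}\to 0$.

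The hardest claim is (5). Here I plan to first invoke the BV compactness of $\{\phi_k'\}$ established in the Appendix to pass the arclength constraint $|\phi_k'|\equiv L_k$ to the limit, securing $|\phi^{\star\prime}|\equiv L^\star$ a.e., where $L^\star=\lim_l L_{k_l}\geqslant|\vec{x}_s-\vec{x}_f|>0$. Setting $\vec{x}^\star=\phi^\star(\alpha_c)\in\Lambda$, the global maximality of $U$ at $\vec{x}^\star$ together with Assumption \ref{asm:nondegenerate} forces $\nabla^2 U(\vec{x}^\star)$ to be strictly negative definite, so a second-order Taylor expansion gives $E^\star-U(\vec{x})\leqslant C|\vec{x}-\vec{x}^\star|^2$ in a neighborhood of $\vec{x}^\star$. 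Combining with the Lipschitz estimate $|\phi^\star(\alpha)-\vec{x}^\star|\leqslant L^\star|\alpha-\alpha_c|$ yields $\sqrt{2E^\star-2U(\phi^\star(\alpha))}\leqslant C'|\alpha-\alpha_c|$ near $\alpha_c$, so the integrand is bounded below by $L^\star/(C'|\alpha-\alpha_c|)$, which is non-integrable at $\alpha_c$; the integral over any $(\alpha_c-\delta,\alpha_c+\delta)$ diverges, giving also $T^\star=+\infty$. The principal obstacle is precisely the promotion of $|\phi_k'|\equiv L_k$ to $|\phi^{\star\prime}|\equiv L^\star$ a.e.; without it one cannot rule out a flat segment of $\phi^\star$ near $\alpha_c$ that would render the integrand formally $0/0$ and potentially finite.
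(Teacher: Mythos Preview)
Your arguments for (1)--(4) track the paper's proof essentially line by line: compactness from Lemma~\ref{lem:compact}, the identification $E^\star=E_{\max}$ via Lemma~\ref{lem:liminfE} together with a contradiction from~\eqref{eq:TandE} staying bounded, and the Fr\'echet bound through the shared graph. The only cosmetic difference is that the paper first proves $E^\star=\max_\alpha U(\phi^\star(\alpha))$ and then sandwiches this with $E_{\max}$, whereas you compare $E^\star$ to $E_{\max}$ directly; the two routes are interchangeable.

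For (5) you are in fact more scrupulous than the paper. The paper's final displayed inequality replaces $|\phi^{\star\prime}|$ by $|\vec{x}_s-\vec{x}_f|$, which tacitly assumes that $\phi^\star$ inherits the constant-speed parametrization $|\phi^{\star\prime}|\equiv L^\star\geqslant|\vec{x}_s-\vec{x}_f|$ from the $\phi_k$; this is precisely the point you single out as the principal obstacle. Your remedy---invoking Proposition~\ref{prop:condev} (the Appendix BV result) to pass $|\phi_k'|\equiv L_k$ to $|\phi^{\star\prime}|\equiv L^\star$ a.e.---is correct and not circular, since the Appendix proof uses only parts (1)--(3) of the present theorem. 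It is admittedly heavy machinery for this purpose; a lighter route is to observe that the integral in (5) is parametrization-invariant, rewrite it as $\int_{\gamma}\frac{\d s}{\sqrt{2E^\star-2U}}$ over the arc-length variable $s$ of $\phi^\star$, and use $|\phi^\star(s)-\vec{x}_c|\leqslant|s-s_c|$ together with the Taylor bound to force divergence, provided $\phi^\star$ has nonzero arc length on at least one side of the critical point (guaranteed since $\vec{x}_s\neq\vec{x}_f$). Either way, your diagnosis of the difficulty is accurate and your proposed fix is sound.
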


\begin{proof}
  (1)-(2)-(3).
  Denote $U_k = \max_\alpha U(\phi_k(\alpha))$. Eq.~\eqref{eq:TandE} gives
  \begin{equation}\label{eq:propA1}
    T_k=\int_{0}^{1}\frac{\abs{\phi'_k}\d\alpha}{\sqrt{2E_k - 2U(\phi_k)}}\leqslant \frac{M}{\sqrt{2E_k - 2U_k}}.
  \end{equation}
  So we have
  \begin{equation}\label{eq:propA2}
    U_k\leqslant E_k\leqslant U_k + \frac{M^2}{2T_k^2}.
  \end{equation}

By Assumption \ref{asm:uniformlybounded}, $\{E_k\}$ is uniformly bounded by a constant $M_E$ and $\phi_k\in \bar{C}_{M,M_E}$. From Lemma \ref{lem:compact},
there exists a subsequence $\left\{k_l\right\}$ such that
$E_{k_l}\rightarrow E^\star$ and $\phi_{k_l}\rightarrow\phi^\star\in \bar{C}_{M,M_E}$.  To show $\phi^\star\in \bar{C}_{M,E^\star}$, we note that
\[0\leqslant E_{k_l}-U(\phi_{k_l})\to E^\star-U(\phi^\star)\quad \text{when}\quad l\to\infty,\]
which means $\max_{\alpha}U(\phi^\star(\alpha))\leqslant E^\star$.

Next  we  show $E^\star=\max_{\alpha}U(\phi^\star(\alpha))$. Otherwise $E^\star - \max_{\alpha\in [0,1]}U(\phi^\star(\alpha))=m>0$. Since $E_{k_l}\to E^\star$ and $\phi_{k_l}\to \phi^\star$ uniformly, we have $\abs{E_{k_l}-E^\star}<\delta_0$, $\abs{U(\phi^\star)-U(\phi_{k_l})}<\delta_0$  for any positive $\delta_0$ when $l$ is sufficiently large.
Choose $\delta_0<m/4$, we obtain
  \begin{equation}\label{eq:Ek-Uk}
    E_{k_l}-U(\phi_{k_l}) = (E_{k_l}-E^\star) + (E^\star-U(\phi^\star)) + (U(\phi^\star)-U(\phi_{k_l}))>m-2\delta_0>0.
  \end{equation}
Since $E_{k_l}-U(\phi_{k_l})\to E^\star-U(\phi^\star)$, it follows from
the dominate convergence theorem that
  \begin{equation}\label{Estar=U}
  \begin{aligned}
    +\infty &=\lim\limits_{l\to\infty}T_{k_l} = \lim\limits_{l\to\infty}\int_{0}^{1}\frac{\abs{\phi_{k_l}'}\d\alpha}{\sqrt{2E_{k_l}-2U(\phi_k)}}\\
   & \leqslant\lim\limits_{l\to\infty}M\int_{0}^{1}\frac{\d\alpha}{\sqrt{2E_{k_l}-2U(\phi_k)}}=M\int_{0}^{1}\lim\limits_{l\to\infty}\frac{\d\alpha}{\sqrt{2E_{k_l}-2U(\phi_k)}}\\
    &\leqslant M\int_{0}^{1}\frac{\d\alpha}{\sqrt{2m}}=\frac{M}{\sqrt{2m}},
  \end{aligned}
  \end{equation}
which is a contradiction. Thus $E^\star = \max_{\alpha\in [0,1]}U(\phi^\star(\alpha))$.

 By Lemma \ref{lem:liminfE}, we have
\[\max_{\vec{x}\in\mathbb{R}^d}U(\vec{x})\leqslant\liminf_{T\to +\infty}E(T)\leqslant E^\star= \max_{\alpha\in [0,1]}U(\phi^\star(\alpha))\leqslant\max_{\vec{x}\in\mathbb{R}^d}U(\vec{x}).\]
 So we have $E^\star=\max_{\vec{x}\in\mathbb{R}^d}U(\vec{x})=\max_{\alpha\in[0,1]}U(\phi^\star(\alpha))$. It follows that there must exist an
$\alpha_c\in [0,1]$,
such that $U(\phi^\star(\alpha_c))
= \max_{\vec{x}\in\mathbb{R}^d}U(\vec{x})$.
Thus $\vec{x}_c = \phi^\star(\alpha_c)$ is a critical point.

 (4).   Since $\psi_{k_l}$ has the same graph as $\phi_{k_l}$, we have
  \[\rho(\psi_{k_l},\phi^\star) = \rho(\phi_{k_l},\phi^\star)\leqslant \max\limits_{\alpha\in [0,1]}\abs{\phi_{k_l}(\alpha)-\phi^\star(\alpha)}\to 0,\,\quad l\to +\infty.\]

(5). By Taylor expansion of $U(\phi^\star)$ near $\vec{x}_c$, we get
  \begin{align}\label{eq:taylorstar}
    E^\star-U(\phi^\star(\alpha)) = & -\frac{1}{2}[\phi^\star(\alpha)-\phi^\star(\alpha_c)]^\T\nabla^2 U(\vec{x}_c)[\phi^\star(\alpha)-\phi^\star(\alpha_c)]\\
    & + o(\abs{\phi^\star(\alpha)-\phi^\star(\alpha_c)}^2).\nonumber
  \end{align}
By Assumption \ref{asm:nondegenerate},  we obtain
\[E^\star-U(\phi^\star(\alpha))\leqslant\frac{\mu_M}{2}\abs{\phi^\star(\alpha)-\phi^\star(\alpha_c)}^2 + o(\abs{\alpha-\alpha_c})^2\leqslant C\abs{\alpha-\alpha_c}^2\]
for some constant $C>0$ in a neighborhood of $\alpha_c$, where $\mu_M$ is the largest eigenvalue of $-\nabla^2 U(\vec{x}_c)$. The integral
\[\int_{\alpha_c-\delta}^{\alpha_c+\delta}\frac{\abs{\phi{^\star}{'}}\d\alpha}{\sqrt{2E^\star-2U(\phi^\star)}}\geqslant\int_{\alpha_c-\delta}^{\alpha_c+\delta}\frac{\abs{\vec{x}_s-\vec{x}_f}\d\alpha}{\sqrt{2C}\abs{\alpha-\alpha_c}}=+\infty.\]
This ends the proof of the last statement.
\end{proof}

Theorem \ref{prop:conv} shows that, up to a subsequence, the limiting
transition energy $E^\star$ and the graph limit of the OM minimizer $\phi^\star$
are well-defined.
Next we show that the graph limit $\phi^\star$ is an extremal of $\hS{E^\star}[\phi]$.
\begin{theorem}\label{thm:extremal}
  $\phi^\star$ is an extremal of $\hS{E^\star}[\phi]$ passing through a critical point with $E^\star=\max_{\vec{x}\in\mathbb{R}^d}U(\vec{x})$.
\end{theorem}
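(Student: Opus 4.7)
The claim that $\phi^\star$ passes through a critical point with $E^\star = \max_{\vec{x}} U(\vec{x})$ is already contained in parts (2)--(3) of Theorem \ref{prop:conv}, so the only remaining task is to verify that $\phi^\star$ satisfies the weak Euler-Lagrange equation \eqref{eq:weakEL} at energy $E = E^\star$. My plan is to pass to the limit $l \to \infty$ in \eqref{eq:weakEL} applied to $\phi_{k_l}$ with energy $E_{k_l}$ (each such $\phi_{k_l}$ being an extremal of $\hS{E_{k_l}}$ by Proposition \ref{prop:finitT} combined with the Maupertuis principle), localized via a cutoff around the singular point $\alpha_c$.

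First I would invoke the BV compactness of $\{\phi_{k_l}'\}$ announced for the Appendix together with the uniform convergence from Theorem \ref{prop:conv} to extract a further subsequence (not relabelled) such that $\phi_{k_l}' \to \phi^{\star'}$ pointwise almost everywhere, with $|\phi_{k_l}'| \equiv L_{k_l} \to L^\star$ and hence $|\phi^{\star'}| = L^\star$ a.e. Next, for an arbitrary test function $\Phi \in C_0^\infty((0,1);\mathbb{R}^d)$ and a smooth cutoff $\eta_\delta$ that vanishes on $[\alpha_c - \delta, \alpha_c + \delta]$ and equals $1$ outside $[\alpha_c - 2\delta, \alpha_c + 2\delta]$, I would treat \eqref{eq:weakEL} in two pieces. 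On the support of $\eta_\delta \Phi$, continuity of $\phi^\star$ and Assumption \ref{asm:nondegenerate} give a uniform gap $E^\star - U(\phi^\star(\alpha)) \geqslant c_\delta > 0$, which by uniform convergence propagates to $E_{k_l} - U(\phi_{k_l}(\alpha)) \geqslant c_\delta/2$ for $l$ large; dominated convergence then transfers the $\eta_\delta \Phi$-part of \eqref{eq:weakEL} from $\phi_{k_l}$ to $\phi^\star$. On the remaining interval $[\alpha_c - 2\delta, \alpha_c + 2\delta]$, I would use the Taylor expansion \eqref{eq:taylorstar} together with Assumption \ref{asm:nondegenerate} to obtain $E^\star - U(\phi^\star(\alpha)) \geqslant c(\alpha - \alpha_c)^2$, and combine with $\nabla U(\vec{x}_c) = 0$ (so that $\nabla U(\phi^\star(\alpha)) = O(|\alpha - \alpha_c|)$) to see that both integrands in \eqref{eq:weakEL} are bounded in a neighborhood of $\alpha_c$, making their integrals there $O(\delta)$. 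Analogous uniform-in-$l$ bounds for the $\phi_{k_l}$-integrands, followed by $l \to \infty$ and then $\delta \to 0$, would give \eqref{eq:weakEL} for $\phi^\star$ at $E = E^\star$.

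The main obstacle I anticipate is establishing the $O(\delta)$ bounds uniformly in $l$ for the $\phi_{k_l}$-integrands near $\alpha_c$. For each finite $l$, the maximum $U_{k_l} = \max_\alpha U(\phi_{k_l}(\alpha))$ is attained at some $\alpha_{c,l}$ that need not coincide with $\alpha_c$, and one needs a quadratic lower bound $E_{k_l} - U(\phi_{k_l}(\alpha)) \gtrsim (\alpha - \alpha_{c,l})^2$ valid on a neighborhood of $\alpha_c$ whose size does not shrink as $l \to \infty$. Proving $\alpha_{c,l} \to \alpha_c$ and producing this uniform quadratic estimate relies crucially on the BV compactness of $\{\phi_{k_l}'\}$ in the Appendix, which supplies the equicontinuity needed to control the approach to and departure from the critical passage uniformly in $l$; without this input, the singular contribution near $\alpha_c$ could a priori survive in the limit and obstruct the identification of $\phi^\star$ as an extremal.
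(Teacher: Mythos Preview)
Your overall strategy matches the paper's: invoke Proposition~\ref{prop:condev} for a.e.\ convergence of $\phi_{k_l}'$, then pass to the limit in the weak Euler--Lagrange equation \eqref{eq:weakEL}. The difference is in how the singular second integrand $\abs{\phi'}\,\nabla U(\phi)\cdot\Phi/\sqrt{2K_E(\phi)}$ is handled near $\alpha_c$.

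You localize with a domain cutoff $\eta_\delta$ and must then bound the contribution on $[\alpha_c-2\delta,\alpha_c+2\delta]$ by $o(1)$ \emph{uniformly in $l$}, which you correctly flag as the main obstacle: for finite $l$ the points where $U(\phi_{k_l})=E_{k_l}$ are turning points of $\psi_{k_l}$, not critical points of $U$, so $\nabla U$ does not vanish there and the quadratic lower bound you want is not immediate. The paper sidesteps this entirely with a \emph{range} truncation instead of a domain cutoff. It first proves, via the Taylor expansion \eqref{eq:taylorstar} applied only to the limit $\phi^\star$, that
\[
G^\star(\alpha):=\frac{\nabla U(\phi^\star)\cdot\Phi}{\sqrt{2K_{E^\star}(\phi^\star)}}
\]
is globally bounded by some constant $C_0$ (using Assumption~\ref{asm:nondegenerate} to get $K_{E^\star}(\phi^\star)\geqslant C_1|\phi^\star(\alpha)-\phi^\star(\alpha_c)|^2$ and $|\nabla U(\phi^\star)|\leqslant C_2|\phi^\star(\alpha)-\phi^\star(\alpha_c)|$). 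Then, setting $G_{k_l,n}:=\min\{n,G_{k_l}\}$ for $n>C_0$, one applies dominated convergence to $G_{k_l,n}$ (bounded by $n$) as $l\to\infty$, and interchanges the $n$- and $l$-limits using that $G_{k_l,n}\to G^\star$ uniformly in $n>C_0$. The first integrand in \eqref{eq:weakEL} is handled by ordinary dominated convergence with no cutoff at all, since $\sqrt{2K_{E_{k_l}}(\phi_{k_l})}\,|\Phi'|$ is a uniform majorant. Thus the paper never needs uniform-in-$l$ estimates near the singularity, only boundedness of the \emph{limiting} integrand---a cleaner route than your cutoff plan, though both rest on the same essential inputs (Theorem~\ref{prop:conv}, Proposition~\ref{prop:condev}, and Assumption~\ref{asm:nondegenerate}).
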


The proof of Theorem \ref{thm:extremal} relies on the following result,
whose proof is rather technical thus will be carried out in the Appendix.

\begin{proposition}\label{prop:condev}
There exists a subsequence $\left\{\phi_{k_l}\right\}$
such that $\phi'_{k_l}\to \phi^{\star}{'}$ almost everywhere.
\end{proposition}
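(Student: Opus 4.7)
The plan is to establish uniform bounded-variation bounds on $\{\phi'_k\}$, apply Helly's selection theorem to extract a subsequence converging almost everywhere, and then identify the limit with $\phi^{\star}{'}$. Since each $\phi_k$ is parameterized by a normalized arclength so that $|\phi'_k|\equiv L_k$ is constant in $\alpha$, and $L_k\leqslant M$ by Assumption \ref{asm:uniformlybounded}, the family $\{\phi'_k\}$ is already uniformly bounded in $L^\infty([0,1];\mathbb{R}^d)$. The remaining task is to bound the total variation $\int_0^1|\phi''_k|\,\d\alpha$ uniformly in $k$.

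Away from any fixed neighborhood of the critical point $\vec{x}_c$ identified in Theorem \ref{prop:conv}, I would use the strong form \eqref{eq:EL0} of the Euler-Lagrange equation to write
\begin{equation*}
  \phi''_k = -\frac{L_k^2\,(I-\hat\tau_k\otimes\hat\tau_k)\nabla U(\phi_k)}{2(E_k-U(\phi_k))}.
\end{equation*}
By the uniform convergence $\phi_k\to\phi^\star$, the convergence $E_k\to E^\star=\max U$, and the strict inequality $U(\phi^\star(\alpha))<E^\star$ for $\alpha$ outside a small set around $\alpha_c$, the denominator is bounded below uniformly in $k$ on such regions while the numerator stays bounded. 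Hence $|\phi''_k|$ is uniformly bounded pointwise on any such region, giving the desired total-variation control there immediately.

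The essential difficulty is controlling $\int|\phi''_k|\,\d\alpha$ over a small neighborhood of $\alpha_c$, where the denominator can become small and $|\phi''_k|$ blows up as $k\to\infty$. Here I would exploit Assumption \ref{asm:nondegenerate} by Taylor-expanding $U$ about $\vec{x}_c$ to obtain, locally, lower bounds of the form $E_k-U(\phi_k)\geqslant c_1(E_k-E^\star)_++c_2|\phi_k-\vec{x}_c|^2$ together with $|\nabla U(\phi_k)|\leqslant C|\phi_k-\vec{x}_c|$. Combining these with the geometric fact that $|\phi_k(\alpha)-\vec{x}_c|\geqslant c_3 L_k|\alpha-\alpha_c^{(k)}|$ near the point $\alpha_c^{(k)}$ of closest approach, the estimate reduces to an explicit one-dimensional integral of the type $\int\frac{s\,\d s}{\eta_k^2+s^2}$, which is only logarithmically divergent and remains uniformly bounded on any fixed finite interval. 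This is the most technical step and is presumably the substance of the appendix.

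With uniform bounded-variation estimates in hand, Helly's selection theorem furnishes a subsequence $\phi'_{k_l}$ converging pointwise almost everywhere to some $\tau^\star$ of bounded variation on $[0,1]$. Passing to the limit in the identity $\phi_{k_l}(\alpha)=\vec{x}_s+\int_0^\alpha\phi'_{k_l}(s)\,\d s$ by dominated convergence, using the uniform $L^\infty$ bound on $\phi'_{k_l}$, we obtain $\phi^\star(\alpha)=\vec{x}_s+\int_0^\alpha\tau^\star(s)\,\d s$, which forces $\tau^\star=\phi^{\star}{'}$ almost everywhere.
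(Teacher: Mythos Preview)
Your overall architecture (uniform BV bounds on $\phi'_k$, Helly's theorem, identification of the limit via dominated convergence in $\phi_k(\alpha)=\vec{x}_s+\int_0^\alpha\phi'_k$) is exactly the paper's, and your treatment of the region away from $\alpha_c$ is fine. The gap is entirely in the near-critical estimate, and it is a real one.

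Your own computation shows it. Bounding the perpendicular component crudely by $|\nabla U(\phi_k)|\leqslant C|\phi_k-\vec{x}_c|$ and using $E_k-U(\phi_k)\gtrsim (E_k-E^\star)_++c\,|\phi_k-\vec{x}_c|^2$ leads, after the change of variable you indicate, to $\int_0^\delta \frac{s\,\d s}{\eta_k^2+s^2}=\tfrac12\ln\bigl(1+\delta^2/\eta_k^2\bigr)$. But $\eta_k^2\sim(E_k-E^\star)_+\to 0$ as $k\to\infty$, so this blows up like $\ln(1/\eta_k)$; ``logarithmically divergent'' and ``uniformly bounded'' are not compatible here. In addition, the geometric lower bound $|\phi_k(\alpha)-\vec{x}_c|\geqslant c_3 L_k|\alpha-\alpha_c^{(k)}|$ is asserted without justification; the arclength constraint gives only an \emph{upper} bound on $|\phi_k(\alpha)-\phi_k(\alpha_c^{(k)})|$, and nothing a priori prevents the curve from winding near $\vec{x}_c$.

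The paper does not try to salvage a pointwise bound on $|\phi''_k|$ in the $\alpha$-variable. Instead it passes to the time variable and studies
\[
\int|\phi''_k|\,\d\alpha = L_k\int_0^{T_k}\sqrt{\Theta_k(t)}\,\d t,\qquad
\Theta_T=\frac{|\nabla U|^2|\dot\psi|^2-(\nabla U\cdot\dot\psi)^2}{|\dot\psi|^4},
\]
which keeps the projection $(I-\hat\tau\otimes\hat\tau)$ that you threw away. Near $\vec{x}_c$ it linearizes the Hamiltonian system $\ddot\psi=-\nabla U(\psi)$ via a $C^{1,\beta}$ Hartman--Grobman theorem, reducing to $\ddot{\vec{x}}=A^2\vec{x}$ with $A^2=-\nabla^2U(\vec{x}_c)$, and then carries out a case-by-case analysis in the eigenbasis of $A$ of the explicit solution to show $\int_0^T\sqrt{\Theta^L_T}\,\d t$ is bounded uniformly in $T$. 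The cancellation that makes this work comes precisely from the cross-term structure of the numerator (schematically $\sum_{i<j}(\mu_i^2 a_i b_j-\mu_j^2 a_j b_i)^2$), which is invisible once you replace the projected gradient by $|\nabla U|$. The argument also needs a separate lemma bounding, uniformly in $T$, the number of times $U(\psi_T)=E(T)$, to control the finitely many points where $\phi_k$ may fail to be $C^2$. None of this is recoverable from the Taylor estimate you propose.
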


\textit{Proof of Theorem \ref{thm:extremal}.}
 By Theorem \ref{prop:conv} and Proposition \ref{prop:condev}, there exists
a subsequence $\{\phi_{k_l}\}$ which satisfies (1) $\phi_{k_l}\to \phi^\star$ uniformly; (2) $\phi'_{k_l}\to \phi{^\star}{'} a.e.$; (3) $E_{k_l}\to E^\star$.

Since $\phi_{k_l}$ is an extremal of $\hS{E_{k_l}}[\phi]$,  it satisfies the Euler-Lagrange equation \eqref{eq:weakEL}. We have
\begin{equation}\label{eq:Ellim}
  \lim\limits_{l\to\infty}\int_{0}^{1}\left(
\sqrt{2K_{E_{k_l}}(\phi_{k_l})}\frac{\phi_{k_l}'\cdot\Phi'}{\abs{\phi_{k_l}'}} - \frac{\abs{\phi_{k_l}'}\nabla U(\phi_{k_l})\cdot\Phi}{\sqrt{2K_{E_{k_l}}(\phi_{k_l})}}\right)
\d\alpha=0,\quad \forall \Phi\in C_0^{\infty}[0,1].
\end{equation}
From $\phi_{k_l}\in \bar{C}_M[0,1]$, we have $\sqrt{2K_{E_{k_l}}(\phi_{k_l})}\leqslant M_1$ for some constant $M_1$, thus
\[\sqrt{2K_{E_{k_l}}(\phi_{k_l})}\frac{\abs{\phi_{k_l}'\cdot\Phi'}}{\abs{\phi_{k_l}'}}\leqslant M_1\abs{\Phi'}.\]
It follows from the dominated convergence theorem that
\begin{equation}\label{eq:boundlhs}
  \lim\limits_{l\to\infty}\int_{0}^{1}\sqrt{2K_{E_{k_l}}(\phi_{k_l})}\frac{\phi_{k_l}'\cdot\Phi'}{\abs{\phi_{k_l}'}}\d\alpha=
  \int_{0}^{1}\sqrt{2K_{E^\star}(\phi^\star)}\frac{\phi{^\star}{'}\cdot\Phi'}{\abs{\phi{^\star}{'}}}\d\alpha.
\end{equation}

By Theorem \ref{prop:conv}, $\phi^\star$ passes through a critical point $\vec{x}_c$ at $\alpha=\alpha_c$. The Taylor expansion \eqref{eq:taylorstar} shows
  \[E^\star-U(\phi^\star(\alpha)) \geqslant \frac{\mu_m}{2}\abs{\phi^\star(\alpha)-\phi^\star(\alpha_c)}^2 + o(\abs{\phi^\star(\alpha)-\phi^\star(\alpha_c)}^2), \]
  where $\mu_m$ is the smallest eigenvalue of $-\nabla^2 U(\vec{x}_c)$.
By Assumption \ref{asm:nondegenerate},  there is a constant $C_1>0$
such that $E^\star-U(\phi^\star)\geqslant C_1\abs{\phi^\star(\alpha)-\phi^\star(\alpha_c)}^2$ in a neighborhood of $\alpha_c$.
Furthermore, we have
  \begin{equation*}
  \begin{aligned}
    \abs{\nabla U(\phi^\star)} &= \abs{\nabla^2 U(\vec{x}_c)(\phi^\star(\alpha)-\phi^\star(\alpha_c))} + o(\abs{\phi^\star(\alpha)-\phi^\star(\alpha_c)})\\
    & \leqslant \mu_M\abs{\phi^\star(\alpha)-\phi^\star(\alpha_c)} + o(\abs{\phi^\star(\alpha)-\phi^\star(\alpha_c)}).
  \end{aligned}
  \end{equation*}
There exists a constant $C_2>0$ such that
$\abs{\nabla U(\phi^\star)\cdot\Phi}\leqslant C_2
\abs{\Phi}\abs{\phi^\star(\alpha)-\phi^\star(\alpha_c)}$.
Combining the above results, we obtain
  \[\frac{\nabla U(\phi^\star)\cdot\Phi}{\sqrt{2E^\star-2U(\phi^\star)}}\leqslant C\abs{\Phi},\]
where the right-hand side is a bounded function.

  For any $l>0$ and given $\Phi(\alpha)$, define
  \[G_{k_l}(\alpha) = \frac{\nabla U(\phi_{k_l})\cdot\Phi}{\sqrt{2E_{k_l}-2U(\phi_{k_l})}}\quad \mbox{and}\quad G^\star(\alpha) = \frac{\nabla U(\phi^\star)\cdot\Phi}{\sqrt{2E^\star-2U(\phi^\star)}}.\]
  We know that $G_{k_l}\to G^\star$ and $G^\star$ is bounded by a constant $C_0>0$.  For any $n>C_0$, let $G_{k_l,n}=\min\{n,G_{k_l}\}$. We have
  \[\lim\limits_{n\to +\infty}G_{k_l,n}=G_{k_l},\quad \lim\limits_{l\to +\infty}G_{k_l,n}=G^\star \quad \mbox{uniformly for } n>C_0.\]
  For a fixed $n$, $G_{k_l,n}$ is bounded. By the dominated convergence theorem,
  \[\lim\limits_{l\to +\infty}\int_{0}^{1}G_{k_l,n}(\alpha)\d\alpha = \int_{0}^{1}G^\star(\alpha)\d\alpha.\]
  Since when $l\to\infty$, $G_{k_l,n}\to G^\star$ uniformly, we have
  \begin{equation}\label{eq:conv}
  \begin{aligned}
    \lim\limits_{l\to +\infty}\int_{0}^{1}\frac{\nabla U(\phi_{k_l})\cdot\Phi}
{\sqrt{2E_k-2U(\phi_k)}} d\alpha& = \lim\limits_{l\to +\infty}\lim\limits_{n\to +\infty}\int_{0}^{1}G_{k_l,n}\d\alpha\\
    & = \lim\limits_{n\to +\infty}\lim\limits_{l\to +\infty}\int_{0}^{1}G_{k_l,n}\d\alpha \\
    & = \int_{0}^{1}G^\star(\alpha)\d\alpha=\int_{0}^{1}\frac{\nabla U(\phi^\star)\cdot\Phi}{\sqrt{2E^\star-2U(\phi^\star)}}\d\alpha.
  \end{aligned}
  \end{equation}

  Because $\abs{\phi'_k}$ is constant and $\abs{\phi'_{k_l}}\to \abs{\phi^\star{'}}$, together with \eqref{eq:Ellim} and \eqref{eq:boundlhs},
  we see  that $\phi^\star$ satisfies the Euler-Lagrange equation \eqref{eq:weakEL} with energy parameter $E=E^\star$. Thus $\phi^\star$ is an extremal of $\hS{E^\star}[\phi]$.
\endproof

\begin{remark}
The weak convergence argument for $\varphi'_k$ is not enough to prove
the theorem due to the existence of the term $\abs{\phi'_k}$.
Although we have the uniform $L^\infty$ bound for both, which guarantees
$\varphi'_k\rightarrow f$ and $|\varphi'_k|\rightarrow g$,
we do not have $g=\abs{f}$ in general.
\end{remark}

Theorem \ref{thm:extremal} shows that the graph limit of minimizers of
$S^{\OM}_T[\psi]$ can be found by solving the Euler-Lagrange equation
of $\hS{E}[\phi]$ with the energy $E^\star=\max_{\vec{x}}U(\vec{x})$.
The extremal of $\hS{E^\star}[\phi]$ always exists, but may not be unique.

Theorem \ref{prop:conv} shows that it takes infinite time
for MPP to pass through a critical point. In general, the time that the MPP
spends in any interval is characterized by the function
\begin{equation}
\lambda^\star=\frac{\sqrt{2E^\star-2U(\phi^\star)}}{\abs{\phi{^\star}'}}.
\end{equation}

\begin{proposition}\label{prop:deltaT}
Let $[\alpha_1, \alpha_2]\subset [0,1]$ be an interval such that  $\lambda^\star(\alpha)>0$ for any $\alpha\in [\alpha_1,\alpha_2]$. Then the time that the MPP $\phi^\star$
takes to go from $\vec{x}_1=\phi^\star(\alpha_1)$ to $\vec{x}_2=\phi^\star(\alpha_2)$ is $\Delta T = \int_{\alpha_1}^{\alpha_2}\lambda{^\star}^{-1}\d\alpha$.
\end{proposition}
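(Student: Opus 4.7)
The plan is to recover a time parameterization for $\phi^\star$ via the Maupertuis principle and then change variables. Since Theorem~\ref{thm:extremal} shows $\phi^\star$ is an extremal of $\hS{E^\star}[\phi]$, the Maupertuis principle supplies, up to reparameterization, a time-parameterized curve $\psi^\star(t)$ with $\gamma(\psi^\star)=\gamma(\phi^\star)$ satisfying the Euler-Lagrange equation~\eqref{eq:EL} of $S^{\OM}$. Since the corresponding Hamiltonian system is autonomous, the energy is conserved along $\psi^\star$, giving the analogue of~\eqref{eq:conservation},
\begin{equation*}
\frac{1}{2}|\dot{\psi}^\star(t)|^2 + U(\psi^\star(t)) \equiv E^\star,
\end{equation*}
so that $|\dot{\psi}^\star(t)|=\sqrt{2(E^\star-U(\psi^\star(t)))}$.

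Next, I would introduce the monotonically increasing reparameterization $t:[\alpha_1,\alpha_2]\to [t_1,t_2]$ defined implicitly by $\phi^\star(\alpha)=\psi^\star(t(\alpha))$, in analogy with~\eqref{eq:repara}. Differentiating gives ${\phi^\star}'(\alpha)=\dot{\psi}^\star(t(\alpha))\,t'(\alpha)$, whence
\begin{equation*}
|{\phi^\star}'(\alpha)| = \sqrt{2(E^\star-U(\phi^\star(\alpha)))}\,t'(\alpha).
\end{equation*}
Recalling that $|{\phi^\star}'|$ is a positive constant under the normalized arclength parameterization, the hypothesis $\lambda^\star(\alpha)>0$ on $[\alpha_1,\alpha_2]$ is equivalent to $U(\phi^\star(\alpha))<E^\star$ there, so the square root is bounded away from zero and one may solve for
\begin{equation*}
t'(\alpha)=\frac{|{\phi^\star}'(\alpha)|}{\sqrt{2(E^\star-U(\phi^\star(\alpha)))}}={\lambda^\star(\alpha)}^{-1}.
\end{equation*}
Integrating from $\alpha_1$ to $\alpha_2$ then yields the desired formula
\begin{equation*}
\Delta T = t_2 - t_1 = \int_{\alpha_1}^{\alpha_2}{\lambda^\star(\alpha)}^{-1}\d\alpha.
\end{equation*}

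The only conceptual ingredient that needs care is energy conservation for the limiting extremal $\phi^\star$ at the exact value $E^\star$. Although $\phi^\star$ is obtained as a limit of the $\phi_k$ whose energies $E_k$ only converge to $E^\star$, Theorem~\ref{thm:extremal} ensures that $\phi^\star$ genuinely solves the Euler-Lagrange equation of $\hS{E^\star}$, and the Maupertuis principle applied in reverse yields a bona fide time-parameterized solution of~\eqref{eq:EL} with conserved energy exactly $E^\star$ on any subinterval where $U(\phi^\star)<E^\star$. The hypothesis $\lambda^\star>0$ isolates precisely such a region, ruling out the turning-point behavior at the critical point $\alpha_c$ where $\lambda^\star$ vanishes (which by statement~(5) of Theorem~\ref{prop:conv} would produce a divergent time integral). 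Once this is in place, the remainder is a routine change of variables and no real obstacle arises.
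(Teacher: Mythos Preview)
Your proof is correct but takes a genuinely different route from the paper's. The paper argues by limits: invoking Theorem~\ref{prop:conv} and Proposition~\ref{prop:condev}, it passes to a subsequence along which $\phi_k\to\phi^\star$ uniformly and $\phi_k'\to{\phi^\star}'$ a.e., writes the finite-$T_k$ transit time as
\[
\Delta T_k=\int_{\alpha_1}^{\alpha_2}\frac{|\phi_k'|}{\sqrt{2E_k-2U(\phi_k)}}\,\d\alpha
\]
from the energy conservation~\eqref{eq:conservation} for each $\psi_k$, and then sends $k\to\infty$ (the hypothesis $\lambda^\star>0$ keeps the integrand bounded, so the limit passes through). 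You instead work intrinsically with the limiting object: you use Theorem~\ref{thm:extremal} plus the reverse Maupertuis principle to manufacture a time-parameterized solution $\psi^\star$ with conserved energy exactly $E^\star$ on $[\alpha_1,\alpha_2]$, then change variables. Your argument is shorter and does not need Proposition~\ref{prop:condev} at all, but it implicitly \emph{defines} ``the time $\phi^\star$ takes'' via this reconstructed parameterization; the paper's limiting approach has the conceptual advantage of identifying $\Delta T$ as the limit of the times actually spent by the approximating minimizers $\psi_k$, which is the physically natural interpretation given that $\phi^\star$ arises only as a graph limit.
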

\begin{proof}
By Theorem \ref{prop:conv} and Proposition \ref{prop:condev}, we may assume $\phi_k\to\phi^\star$, $\phi_k'\to \phi{^\star}{'}$ without loss of generality. For any $T_k$,  the relation between $\phi_k$ and the minimizer $\psi_k$ of $S_{T_k}[\psi]$ is
\begin{equation}\label{eq:repa}
  \phi_k(\alpha) = \psi_k(\ell_k^{-1}(\alpha)),\quad \text{ where }\ell_k(t) = L_k^{-1}\int_{0}^{t}|\dot{\psi}_k|\d t,\ L_k = \int_{0}^{T_k}|\dot{\psi}_k|\d t.
\end{equation}
We have $\psi_k(t) = \phi_k(\ell_k(t))$.
Eq.~\eqref{eq:conservation} shows that the time spent by $\phi_k$  from $\vec{x}_1^k=\phi_k(\alpha_1)$ to $\vec{x}_2^k=\phi_k(\alpha_2)$ is
  \begin{equation}\label{eq:deltaTk}
    \Delta T_k = \int_{t_1}^{t_2}\frac{|\dot{\psi}_k|\d t}{\sqrt{2E_k-2U(\psi_k)}} = \int_{\alpha_1}^{\alpha_2}\frac{\abs{\phi'_k}\d \alpha}{\sqrt{2E_k-2U(\phi_k)}},
  \end{equation}
  where $t_1 = \ell_k^{-1}(\alpha_1)$, $t_2=\ell_k^{-1}(\alpha_2)$. Let $k\to \infty$. We obtain
  \begin{equation}\label{eq:DeltaT}
    \Delta T = \lim\limits_{k\to\infty}\Delta T_k = \lim\limits_{k\to\infty}\int_{\alpha_1}^{\alpha_2}\frac{\abs{\phi'_k}\d \alpha}{\sqrt{2E_k-2U(\phi_k)}} = \int_{\alpha_1}^{\alpha_2}\frac{\d\alpha}{\lambda^\star}.
  \end{equation}
\end{proof}

Proposition \ref{prop:deltaT} and Theorem \ref{prop:conv} show that $\lambda^\star(\alpha)$
characterizes the time that the MPP spends at any point $\alpha$.
It takes longer time to pass through points with smaller values of $\lambda^\star$.
Since $|\phi^\star{'}|$ is a constant, the local minima of $\lambda^\star$
correspond to the local maxima of $U(\phi^\star)$ along the MPP.
These states will be referred to as
the \textit{$\lambda$-critical state}.
Note that it does not need to take infinite time to pass through such states.

\begin{remark}
Theorem \ref{prop:conv} can be readily generalized to non-gradient systems.
However, the proof of Proposition \ref{prop:condev} highly relies
on the gradient nature of the system.
\end{remark}

\section{Comparison with the Freidlin-Wentzell functional}\label{sec:Compare}

In this section, we make comparisons between the double minimization problems
of the OM and FW functionals. We first summarize the differences
between these two problems and then give an illustrative example.

\subsection{Differences between the FW and OM functionals}

 When $\epsilon=0$, the OM functional reduces to the FW functional.
The {double minimization problem of the FW functional, which is stated as}
\[S^{\FW}(\vec{x}_s,\vec{x}_f) =
\inf\limits_{T>0}\inf\limits_{\psi\in \bar{C}_{\vec{x}_s}^{\vec{x}_f}[0,T]}S_T^{\FW}[\psi],\]
was studied in \cite{heymann2008geometric, Lv2014, zhou2016construction}.
The main results are summarized as follows:
\begin{enumerate}\label{summaryOM}
  \item The infimum occurs at some $T=T^\star$.
The MPP may not pass through any critical point.
$T^\star= +\infty$ if and only if the MPP passes through a critical point.
  \item $S^{\FW}(\vec{x}_s,\vec{x}_f)$, which is called
the local quasi-potential starting from $\vec{x}_s$ if $\vec{x}_s$ is a steady state,
is positive and finite in general.
  \item The corresponding energy parameter $E(T^\star)=0$
regardless of the value of $T^\star$ (finite or infinite).
The value of $S^{\FW}(\vec{x}_s,\vec{x}_f)$ can be obtained by minimizing the geometric functional $\hS{0}[\phi]$, i.e.
      \[S^{\FW}(\vec{x}_s,\vec{x}_f) = \inf\limits_{\phi\in \bar{C}_{\vec{x}_s}^{\vec{x}_f}}\hS{0}[\phi].\]
We note that for a prescribed value of $T$, the corresponding
energy $E$ may assume non-zero value.

  \item Under Assumption \ref{asm:uniformlybounded},
the minimizer of $S_T^{\FW}[\psi]$
has a subsequence that converges to the minimizer of $\hS{0}[\phi]$
as $T\rightarrow T^\star$ in the sense that $\rho(\psi_T,\phi_0)\to 0$, where $\phi_0$ is the minimizer of $\hS{0}[\phi]$.
\end{enumerate}

Our results in previous sections generalize the above conclusions for the FW functional
to  the OM functional. In Proposition \ref{prop:D}, we showed that
the double minimization of the OM functional always occurs at $T^\star=+\infty$.
Even when neither $\vec{x}_s$ nor $\vec{x}_f$ is critical, the minimizer
must pass through a critical point. The corresponding value of the OM functional
$S^{\OM}(\vec{x}_s,\vec{x}_f)$ is always $-\infty$ when $\epsilon>0$.
This implies that we cannot define an analog of the quasi-potential
as in the Freidlin-Wentzell theory via the minimization of the OM functional.
In spite of that, we can still study the graph limit of the minimizers of $S^{\OM}_T[\psi]$
in the configuration space with the help of the Maupertuis principle.
Theorem \ref{prop:conv} showed that the corresponding energy parameter $E(T)$
converges to $E^\star=\max_{\vec{x}\in\mathbb{R}^d}U(\vec{x})$.
The minimizer of $S^{\OM}_T[\psi]$ has a subsequence that converges
to an extremal of $\hS{E^\star}[\phi]$. However, the extremal may
not be a minimizer of $\hS{E^\star}[\phi]$.
Note that when $\epsilon\to 0$, $\max_{\vec{x}\in \mathbb{R}^d} U(\vec{x})\to 0$,
then formally $E(T)\to 0$ as $T\to +\infty$, which is consistent
with the Freidlin-Wentzell theory. The convergence from the OM functional to the FW
functional was rigorously studied using the $\Gamma$-convergence technique
in \cite{pinski2012gamma}.

In summary, the main differences between the double minimization of the FW and
OM functionals are: (1) $S^{\FW}(\vec{x}_s,\vec{x}_f)$ is finite while $S^{\OM}(\vec{x}_s,\vec{x}_f)$ is $-\infty$; (2) The minimizer of the FW functional has the energy
$E^\star=0$ while the minimizer of the OM functional has a positive energy;
(3) The graph limit of the minimizer of the OM functional is an extremal
rather than a minimizer of $\hS{E^\star}[\phi]$.
The second point implies that in the numerical methods, one has to identify the
MPP together with the unknown energy $E^\star$ at the same time. This is pursued in Section \ref{sec:Method}.

\subsection{An illustrative example}\label{sec:Exam}
In this section, we present a simple but informative example to
illustrate the main results derived earlier for the OM and FW functionals.
We consider a 2-D diffusion process with drift $\vec{b}(\vec{x}) = -\nabla V(\vec{x})$,
where $V(\vec{x}) = \frac{1}{2}|\vec{x}|^2$ is a quadratic potential.
The path potential $U(\vec{x}) = 2\epsilon -\frac{1}{2}|\vec{x}|^2$
has only one critical point $\vec{x}_c=0$, which is also the steady state
of the dynamics $\dot{\vec{x}}=-\nabla V(\vec{x})$.

 For any pair of states $\vec{x}_s$, $\vec{x}_f$ and fixed $T>0$,
the Euler-Lagrange equation of the OM functional is given by
\begin{equation}\label{eq:EL2d}
  \ddot{\psi}_T(t) - \psi_T(t) = 0,\quad t\in (0,T)
\end{equation}
with the boundary conditions $\psi_T(0)=\vec{x}_s $ and $\psi_T(T) = \vec{x}_f$.
The solution of Eq.~\eqref{eq:EL2d} is given by
\begin{equation}\label{eq:psiT}
  \psi_T(t) = \vec{A}\me^t + \vec{B}\me^{-t},
\end{equation}
where the vectors $\vec{A}$ and $\vec{B}$ are given by
\begin{equation}\label{eq:AB}
\vec{A} = (\me^T-\me^{-T})^{-1}(\vec{x}_f - \vec{x}_s  \me^{-T}),
\quad \vec{B} = (\me^T-\me^{-T})^{-1}(\vec{x}_s  \me^T - \vec{x}_f).
\end{equation}
They can be viewed as functions of $T$. Because $\nabla^2_{\vec{y}}L=I$ is positive definite, $\psi_T$ satisfies strict Legendre-Hadamard condition. So $\psi_T$ is the unique minimizer of $S^{\OM}_T[\psi]$. Since $\psi_T$ is independent of $\epsilon$, it is also the minimizer of the FW functional $S_T^{\FW}[\psi]$.

Along the path $\psi_T$, the conservation of energy \eqref{eq:conservation} gives
\[\frac{1}{2}|\dot{\psi}_T|^2 + U(\psi_T) =2\epsilon -2\vec{A}\cdot\vec{B},\]
thus the energy $E=E(T) = 2\epsilon -2\vec{A}\cdot\vec{B}$.

The value of $S^{\OM}_T[\psi_T]$ can be also calculated as
\begin{align}\label{eq:ST}
  S^{\OM}_T[\psi_T] &= \int_{0}^{T}\Big(\frac{1}{2}|\dot{\psi}_T|^2 + \psi_T\cdot\dot{\psi}_T + \frac12\abs{\psi_T}^2\Big)\d t -2\epsilon T\\
              &= \frac{1}{2}|\vec{x}_f|^2 - \frac{1}{2}|\vec{x}_s  |^2 - 2\epsilon T + \frac{1}{2}\abs{\vec{A}}^2(\me^{2T}-1) + \frac{1}{2}\abs{\vec{B}}^2(1-\me^{-2T}).\nonumber
\end{align}
It is straightforward to check that
\begin{equation}\label{eq:dST}
\pp{S^{\OM}_T[\psi_T]}{T} = 2\vec{A}\cdot\vec{B} - 2\epsilon=-E(T).
\end{equation}

In what follows we consider two special choices of $\vec{x}_s$ and $\vec{x}_f$.
Let $\vec{x}_s  =(R_1\cos\theta_1,$ $R_1\sin\theta_1)$ and
$\vec{x}_f=(R_2\cos\theta_2, R_2 \sin\theta_2)$.
We first consider the case where $\theta_1=\theta_2=0$ and $R_2>R_1>0$, i.e.
the two states $\vec{x}_s$ and $\vec{x}_f$ are on the positive $x$-axis on the $x$-$y$ plane,
and the initial state $\vec{x}_s$ is closer to the origin.
In the second case, we set $R_1=R_2>0$ and $0<\theta_1<\theta_2<\pi/2$ where
the two states $\vec{x}_s$ and $\vec{x}_f$ are on the same countor line of the
energy $V(\vec{x})$.
In both cases, we examine the minimizers of the OM and FW functionals
in detail.

\subsection*{Case 1: $\theta_1=\theta_2=0$ and $R_2>R_1>0$}

Let $\vec{x}_s   = (x_s,0)$ and $\vec{x}_f = (x_f,0)$,
where $0<x_s<x_f$. In this case, the vectors $\vec{A}=(A,0)$, $\vec{B}=(B,0)$
in \eqref{eq:psiT}, thus this is essentially a one-dimensional transition on the $x$-axis.
For simplicity we will use $\psi_T$ to denote the $x$ coordinate of the transition
path only.

\cref{fig:1d} (left panel) shows the graph of $\partial S^{\OM}_T[\psi_T]/\partial T = 2AB-2\epsilon$
(or $-E(T)$) versus $T$. Denote the two zeros of  $\partial S^{\OM}_T[\psi_T]/\partial T$
by $T_{\min}$ and $T_{\max}$, respectively. Then it is easily seen that
$S^{\OM}_T[\psi_T]$ attains a local minimum at $T=T_{\min}$ and
a local maximum at $T=T_{\max}$.
As $T\rightarrow +\infty$, it follows from \eqref{eq:AB} that
$\partial S_T^{\OM}[\psi_T]/\partial T \rightarrow -2\epsilon$.
Thus for sufficiently large $T$, we have
$$\pp{S_T^{\OM}[\psi_T]}{T} < -\epsilon<0, $$
and consequently, $\lim_{T\to\infty}S_T^{\OM}[\psi_T]= -\infty$,
which agrees with the results in Proposition \ref{prop:D}.

Next we examine the transition path $\psi_T$ and its graph in the configuration space
in detail.
Note that we have $A>0$ for any $T>0$ and $\dot{\psi}_T = A\me^t-B\me^{-t}$.
To see whether there exists $t\in [0,T]$ such that $\dot\psi(t)<0$, we define
$$f(T)=\frac{B}{A}=\frac{x_s \me^T-x_f}{x_f-x_s\me^{-T}}.$$
Since $f(T)$ is an increasing function for $T>0$,
there exist unique $T_b> T_a>0$ such that $f(T_a)=0$ and $f(T_b)=1$.
A straightforward calculation shows that $\partial S_T^{\OM}[\psi_T]/\partial T$ attains
the maximum at $T=T_b$ as shown in \cref{fig:1d}.
We consider the following two cases:

\vspace{0.3cm}
\noindent 1) $T \le T_b$. In this case, $B\le A$. Thus $\dot{\psi}_T\ge 0$
for all $t\in [0, T]$, which implies the system moves  from the initial state
$\vec{x}_s$  towards the final state $\vec{x}_f$ along the path $\psi_T(t)$.
The graph of the path is simply the line segment connecting $\vec{x}_s$ and $\vec{x}_f$.
Using the normalized arc-length as the parameterization, the path can be expressed
as
 \begin{equation}\label{eq:quadmin1}
 \phi_E(\alpha) = x_s  + L\alpha,
 \end{equation}
where $0\le \alpha \le 1$ and $L = x_f -x_s$. Note that although different
choices of $T$ correspond to different minimizers $\psi_T$,
they have the same graph in the configuration space.

\vspace{0.3cm}
\noindent 2) $T > T_b$. In this case, we have $B> A$ and
there exists a unique $t^\star = \ln \sqrt{B/A}\in [0,T]$
such that $\dot{\psi}_T(t^\star)=0$.
Furthermore we have $\dot{\psi}_T<0$ when $t\in [0, t^{\star})$ and
$\dot{\psi}_T>0$ when $t\in (t^{\star}, T]$.
It follows that, along the transition path, the system moves towards the origin
until $t=t^\star$ then it turns back and moves towards the final state $\vec{x}_f$.
The turning point is given by
$x_c = \psi_T(t^\star) = 2\sqrt{AB}<x_s $.

The mapping from $\psi_T$ on $[0,T]$ to $\phi_E$ on $[0,1]$ is done as follows.
We define the normalized arc-length parameterization as
\begin{equation}\label{eq:alphat}
  \alpha=\ell(t) = \begin{cases}
                L^{-1}(x_s  - \psi_T(t)), &  0\leqslant t\leqslant t^\star \\
                L^{-1}(x_s  - 2x_c + \psi_T(t)), & t^\star\leqslant t\leqslant T,
              \end{cases}
\end{equation}
where $L$ is the arc-length of the path
\[L = \int_{0}^{T}|\dot{\psi}_T|\d t = -\int_{0}^{t^\star}\dot{\psi}_T\d t + \int_{t^\star}^{T}\dot{\psi}_T\d t = x_f+x_s  -2x_c.\]
Then $\psi_T(t)$ is mapped to
\begin{equation}\label{eq:phi}
  \phi_E(\alpha) = \begin{cases}
                x_s  - L\alpha, &  0\leqslant \alpha\leqslant \alpha^\star \\
                2x_c - x_s  + L\alpha, & \alpha^\star\leqslant \alpha\leqslant 1,
              \end{cases}
\end{equation}
where $\alpha^\star = \ell(t^\star) = L^{-1}(x_s -x_c)$.

The graph $\phi_E$ in \eqref{eq:phi} depends on $T$. In particular,
the turning point $x_c$ is a function of $T$,
$$x_c = 2\sqrt{AB}=(2x_s x_f\cosh T -x_s ^2-x_f^2)^{1/2}/\sinh T .$$
It is easilly seen that as $T\to +\infty$, $\vec{x}_c\to 0$,
 i.e. the turning point converges to the critical point.
This shows that the graph limit of the MPP will pass through
a critical point even when $\vec{x}_s$ and $\vec{x}_f$ are not
at the steady state of $\dot{\vec{x}}=-\nabla V(\vec{x})$.
Furthermore, the energy $E(T)\rightarrow E^\star=2\epsilon$ as $T\rightarrow +\infty$.
In this limit, $\phi_E$ converges to
\begin{equation}\label{eq:phistar}
\phi_{E^\star}(\alpha)  = \begin{cases}
                x_s  - L\alpha, &  0\leqslant \alpha\leqslant (x_f+x_s )^{-1}x_s, \\
                L\alpha - x_s , & (x_f+x_s )^{-1}x_s \leqslant \alpha\leqslant 1.
              \end{cases}
\end{equation}

The geometric functional corresponding to $E^\star=2\epsilon$ is given by
$$\hS{E^\star}[\phi] = \int_{0}^{1}|\phi|\abs{\phi'}\d\alpha.$$
The corresponding strong form of the Euler-Lagrange equation is
\begin{equation}\label{eq:ELE-Exam}
|\phi|^2\phi''+(\phi\cdot \phi')\phi'-|\phi'|^2\phi=0.
\end{equation}
It is easy to see that the graph limit $\phi_{E^\star}$
with the turning point $\vec{x}_c=0$ is a weak solution
of \eqref{eq:ELE-Exam}, thus an extremal of the functional $\hS{E^\star}[\phi]$.
This verifies the result obtained in Theorem \ref{thm:extremal}.
To further check whether $\phi_{E^\star}$ is the global minimizer
of $\hS{E^\star}[\phi]$, we compare it with the following function
$$\tilde\phi(\alpha)=x_s+\tilde{L}\alpha,\quad \alpha\in [0,1], $$
where $\tilde{L}=x_f-x_s$. This is the graph of the path obtained when $T\le T_b$.
A direct calculation yields
$$\hS{E^\star}[\tilde\phi]=\frac12(x_f^2-x_s^2).$$
In comparison,
$$\hS{E^\star}[\phi_{E^\star}]=\frac12(x_f^2+x_s^2) > \hS{E^\star}[\tilde\phi].$$
Thus $\phi_{E^\star}$ is only an extremal of $\hS{E^\star}[\phi]$,
but not its global minimizer.

Next we compare the results obtained above for the OM functional
with the FW functional.
For the FW functional, since $\epsilon=0$, we have
$\partial S^{\FW}_T[\psi_T]/\partial T=2AB$. The right panel of \cref{fig:1d}
shows the graph of  $\partial S^{\FW}_T[\psi_T]/\partial T$ versus $T$.
The only zero of  $\partial S^{\FW}_T[\psi_T]/\partial T = 0$ occurs at $T=T_a$,
at which $S^{\FW}_T[\psi_T]$ attains the global minimum. The graph of the
corresponding transition path is the one in \eqref{eq:quadmin1}, which
has the action
$$S^{\FW}_{T_a}[\psi_{T_a}]=x_f^2-x_s ^2 = 2V(\vec{x}_f)-2V(\vec{x}_s  ).$$
In comparison, when $T\to\infty$, the graph of the transition path converges
to the one given in \eqref{eq:phistar} with the action
\[\lim\limits_{T\to +\infty}S_T^{\FW}[\psi_T] = x_f^2=2V(\vec{x}_f)-2V(\vec{0}),\]
which is obviously larger than $S^{\FW}_{T_a}[\psi_{T_a}]$.
\begin{figure}
  \centering
  \scalebox{0.45}{\includegraphics[angle=90]{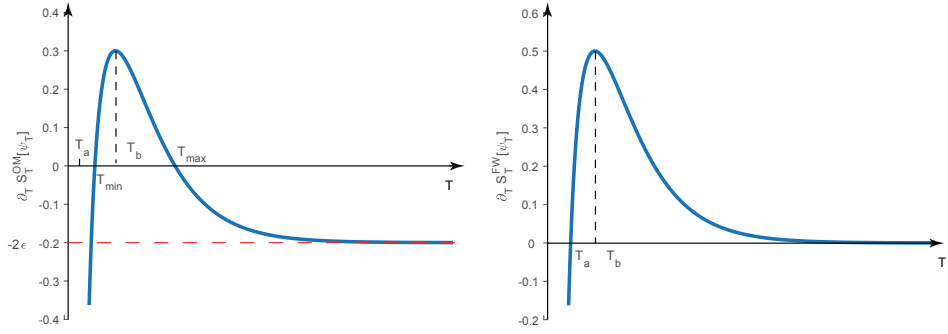}}
\caption{
The plot of  $\partial S_T[\psi_T]/\partial T$ (or $-E(T)$)
versus $T$ for the Onsager-Machlup  functional (left panel)
and the Freidlin-Wentzell (right panel), where $x_s =1, x_f=2, \epsilon = 0.1$.
$T_a<T_b$ satisfy $f(T_a)=0, f(T_b)=1$. $T_{\min}$ and $T_{\max}$ are two zeros of $\partial S_T^{\OM}[\psi_T]/\partial T$.
}\label{fig:1d}
\end{figure}

\subsection*{Case 2: $R_1=R_2=R>0$ and $0<\theta_1<\theta_2<\pi/2$}
In this case, we illustrate the different transition behaviors modeled by
the OM and FW functionals between the two states
$\vec{x}_s  =(R\cos\theta_1, R\sin\theta_1)$, $\vec{x}_f=(R\cos\theta_2, R\sin\theta_2)$.
The two states lie on the same contour line of $V(\vec{x})$ with
  $V(\vec{x}_s  )=V(\vec{x}_f)=R^2/2$.

Let us consider the OM functional first.
As in case 1, the fact that
$$\pp{S^{\OM}_T[\psi_T]}{T}=-E\to -2\epsilon<0$$
as $T \to +\infty$ implies that the infimum of $S^{\OM}_T[\psi_T]$ occurs when $T\to +\infty$.
To examine if the transition path passes through the origin, we consider
the distance of the minimizer $\psi_T$ to the origin. For any given $T$, we have
\[\dd{\,}{t} |\psi_T|^2= 2\abs{\vec{A}}^2\me^{2t} - 2\abs{\vec{B}}^2\me^{-2t}.\]
The solution of $\d |\psi_T|^2/\d t=0$ is $t^\star=\ln(|\vec{B}|/|\vec{A}|)/2>0$.
In addition, we have $\d^2|\psi_T|^2/\d t^2>0$ for all $t$, therefore,
$|\psi_T|$ attains its minimum at $t=t^\star$, i.e.
$\psi_T(t^\star)$ is the state along the path which is the closest to the origin.
This shortest distance is given by
\begin{equation}\label{eq:dist}
|\psi_T|_{\min}^2=|\psi_T|^2(t^\star)=2|\vec{A}||\vec{B}|+2\vec{A}\cdot\vec{B},
\end{equation}
which converges to 0 as $T\rightarrow \infty$.
Therefore, the graph limit of the transition path
passes through the critical point $\vec{x}=0$.
A typical path is shown in  \cref{fig:2D}.

For the FW functional, the minimum of $S^{\FW}_T[\psi_T]$ is attained at
$T=T_c$, the solution to  $\partial S^{\FW}_T[\psi_T]/\partial T=0$, or explicitly,
\[
T_c =\cosh^{-1}\left(\frac{1}{\cos(\theta_2-\theta_1)}\right).
\]
The corresponding action is given by
$S^{\FW}_{T_c}[\psi_{T_c}]=4R^2\sin(\theta_2-\theta_1)$.
The shortest distance from the path $\psi_{T_c}$ to the origin can be computed
using \eqref{eq:dist}, which yields
\[\abs{\psi_{T_c}}^2_{\min} =  R^2\cos(\theta_2-\theta_1)\leqslant R^2.\]
This shows that the minimizer of the FW functional, $\psi_{T_c}$,
has a positive distance from the origin, and the distance is smaller than $R$.
A typical graph of the transition path is shown in  \cref{fig:2D}.
Note that it is not along the contour of $V(\vec{x})$.

\begin{figure}
  \centering
  \includegraphics[width=7cm]{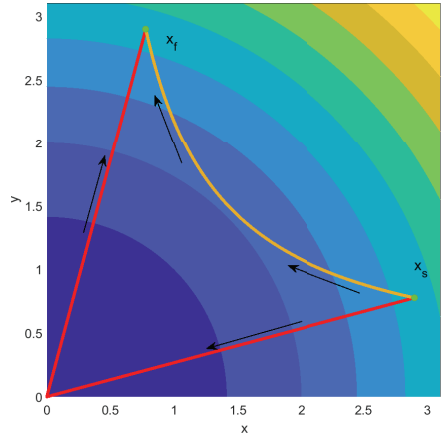}
  \caption{(Color Online). The minimizer of the FW and OM functionals
from $\vec{x}_s$ to $\vec{x}_f$, where $R=3$, $\theta_1=\pi/12$, $\theta_2=5\pi/12$, $\epsilon =0.1$. Yellow curve: the minimizer of the FW functional.
Red line: the minimizer of the OM functional. The background colors show the contour of $V(\vec{x})$.}\label{fig:2D}
\end{figure}

\section{Numerical method}\label{sec:Method}

Based on the insights gained from the theoretical analysis in previous sections,
we design an  energy-climbing geometric minimization method
to compute the graph limit of the minimizer of the OM
functional when $T$ goes to infinity.

The graph limit of the minimizer satisfies the Euler-Lagrange
equation \eqref{eq:EL0}, with the kinetic energy $K_E = E-U(\phi)$ and
$E =\max_\alpha U(\phi(\alpha))$. This is a highly nonlinear equation.
We propose to use the following steepest decent like dynamics
to find the solution:
\begin{align}\label{eq:relax}
\pp{\phi}{s} =2K(s, \alpha)\phi'' +\abs{\phi'}^2 (I-\hat\tau\otimes\hat\tau)\cdot\nabla U(\phi)+\lambda \hat\tau,\quad \alpha\in(0,1),\ s>0\nonumber\\
    \phi(s,0) = \vec{x}_s  ,\  \phi(s,1) = \vec{x}_f, \quad \text{and}\quad (\abs{\phi'})'=0
\end{align}
with initial condition $\phi(0,\alpha) = \phi^0(\alpha)$,
where $\lambda$ is a Lagrange multiplier to ensure the
constraint $(\abs{\phi'})'=0$, and
\begin{equation}
K(s, \alpha):=E(s)-U(\phi(s, \alpha))
\quad \text{and}\quad E(s) := \max_\alpha U(\phi(s,\alpha)).
\end{equation}
Here $s$ denotes the artificial relaxation time.

The well-posedness and long time behavior of Eq.~\eqref{eq:relax}
is not well-understood
yet due to the nonlinearity and degeneracy of $K(s, \alpha)$ at
$\alpha_c= \argmax_{\alpha} U(\phi)$. We will leave this problem to
our future studies.
Nevertheless, provided the dynamical system \eqref{eq:relax} reaches
a steady state as $s\to \infty$, the steady-state solution solves
 \eqref{eq:EL0}, thus gives the graph limit
of the minimizer of the OM functional.

We use \eqref{eq:relax} to construct numerical schemes.
We first semi-discretize Eq.~\eqref{eq:relax} with respect to the
relaxation time $s$. Using an explicit time-stepping with stepsize $\delta s$,
we get
\begin{align}\label{eq:semischeme}
  \phi^{n+1} = \phi^n + \delta s \Big(2K_n(\alpha) (\phi^n)''
+\abs{(\phi^n)'}^2 (I-\hat\tau^n\otimes\hat\tau^n)\cdot\nabla U(\phi^n)\Big)
+\lambda^{n+1} \hat\tau^{n+1},
\end{align}
where $\phi^n(\alpha)$ is the numerical approximation of $\phi(n\delta s,\alpha)$ and
\begin{equation}\label{eq:EnKn}
 K_n(\alpha):=E_n-U(\phi^n), \quad E_n := \max\limits_{\alpha}U(\phi^n(\alpha)).
\end{equation}

The scheme \eqref{eq:semischeme} is equivalent to first obtain  $\tilde\phi^{n+1}$ by
\begin{align}\label{eq:SchemeWithoutRep}
 \tilde\phi^{n+1} = \phi^n + \delta s \Big(2K_n(\alpha) (\phi^n)''
+\abs{(\phi^n)'}^2 (I-\hat\tau^n\otimes\hat\tau^n)\cdot\nabla U(\phi^n)\Big),
\end{align}
then get $\phi^{n+1}$ by reparameterizing  $\tilde\phi^{n+1}$ with equi-arclength condition.

For a given $\phi^0$, the scheme \eqref{eq:semischeme} generates a sequence
of paths $\{\phi^n\}$ and the corresponding energy  $\{E_n\}$.
We have the following theorem concerning the properties
of the above numerical scheme.

\begin{proposition}\label{prop:C}
  Suppose $\phi^0(\alpha)\in C^{\infty}[0,1]$, $U(\vec{x})\in C^{\infty}(\mathbb{R}^d)$,
 then $\phi^n(\alpha)\in C^{\infty}[0,1]$.
When $\delta s$ is sufficiently small, the sequence $\{E_n\}$
generated using \eqref{eq:semischeme} is nondecreasing.
Moreover, if $\phi^n$ converges to a steady state  $\phi^\star$, then $E_n\to E_c$, and $\phi^\star$ solves the Euler-Lagrange
equation \eqref{eq:EL0} with $E=E_c$. Here, $E_c=U(\phi^\star(\alpha^\star))$ for some $\alpha^\star\in [0,1]$. $\vec{x}^\star=\phi^\star(\alpha^\star)$ satisfies the condition $\nabla U(\vec{x}^\star)=0$.
\end{proposition}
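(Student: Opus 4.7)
The plan is to establish the three parts by an induction for smoothness, a local Taylor expansion at the argmax for the monotonicity of $\{E_n\}$, and a normal--tangential decomposition after passing to the limit for the characterization of steady states.

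For smoothness I would argue by induction on $n$. Given $\phi^n\in C^\infty[0,1]$ with $\abs{(\phi^n)'}$ equal to a positive constant (so $\hat\tau^n=(\phi^n)'/\abs{(\phi^n)'}$ is smooth), the right-hand side of \eqref{eq:SchemeWithoutRep} is visibly smooth in $\alpha$ since $U\in C^\infty$. Hence $\tilde\phi^{n+1}\in C^\infty[0,1]$; for $\delta s$ sufficiently small $\abs{(\tilde\phi^{n+1})'}$ stays bounded away from zero, so the equi-arclength reparameterization is a smooth diffeomorphism of $[0,1]$ and $\phi^{n+1}\in C^\infty[0,1]$.

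For the monotonicity, let $\alpha_n^\star$ be a maximizer of $U(\phi^n(\alpha))$, which I take to be interior (the natural setting when $\vec{x}_s,\vec{x}_f$ are not near critical points of $U$). Then $K_n(\alpha_n^\star)=0$, and the first-order condition $(U\circ\phi^n)'(\alpha_n^\star)=0$ yields $\nabla U(\phi^n(\alpha_n^\star))\perp\hat\tau^n(\alpha_n^\star)$, whence $(I-\hat\tau^n\otimes\hat\tau^n)\nabla U=\nabla U$ at $\alpha_n^\star$. Substituting into \eqref{eq:SchemeWithoutRep} gives
$$\tilde\phi^{n+1}(\alpha_n^\star)=\phi^n(\alpha_n^\star)+\delta s\,\abs{(\phi^n)'(\alpha_n^\star)}^2\,\nabla U(\phi^n(\alpha_n^\star)),$$
and a second-order Taylor expansion of $U$ produces
$$U(\tilde\phi^{n+1}(\alpha_n^\star))=E_n+\delta s\,\abs{(\phi^n)'(\alpha_n^\star)}^2\,\abs{\nabla U(\phi^n(\alpha_n^\star))}^2+O(\delta s^2).$$
Reparameterization preserves the image of $U\circ\phi^n$, so $E_{n+1}\ge U(\tilde\phi^{n+1}(\alpha_n^\star))\ge E_n$ for $\delta s$ small enough, with the inequality being immediate when $\nabla U(\phi^n(\alpha_n^\star))=0$ (then the update vanishes at $\alpha_n^\star$).

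For the convergence, if $\phi^n\to\phi^\star$ uniformly, continuity of $U$ gives $E_n\to E_c:=\max_\alpha U(\phi^\star(\alpha))=U(\phi^\star(\alpha^\star))$ for some $\alpha^\star$. Passing to the limit $\phi^{n+1}-\phi^n\to 0$ in \eqref{eq:semischeme} yields
$$2K_c(\alpha)(\phi^\star)''+\abs{(\phi^\star)'}^2(I-\hat\tau^\star\otimes\hat\tau^\star)\nabla U(\phi^\star)+\lambda^\star\hat\tau^\star=0.$$
Because $\abs{(\phi^\star)'}$ is constant, $(\phi^\star)''\perp\hat\tau^\star$, and the middle term is normal by construction; projecting onto the normal and tangential directions gives $\lambda^\star=0$ together with the Euler-Lagrange equation \eqref{eq:EL0} at energy $E=E_c$. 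At $\alpha=\alpha^\star$ the vanishing of $K_c$ reduces the equation to $(I-\hat\tau^\star\otimes\hat\tau^\star)\nabla U(\phi^\star(\alpha^\star))=0$, i.e.~$\nabla U\parallel\hat\tau^\star$ at $\alpha^\star$, which combined with the interior maximum condition $\nabla U\cdot\hat\tau^\star=0$ forces $\nabla U(\phi^\star(\alpha^\star))=0$. The main obstacle is making the monotonicity estimate uniform in $n$: the $O(\delta s^2)$ remainder involves $\nabla^2 U$ and $\abs{(\phi^n)'}$ along the iterates, so the admissible threshold for $\delta s$ must be controlled by a uniform $C^2$-bound on $U$ over a compact set enclosing all $\phi^n$. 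This compactness can be bootstrapped from Assumption \ref{asm:decay} together with the just-established monotonicity of $E_n$, which confines the iterates to a sublevel-related neighborhood of the critical set.
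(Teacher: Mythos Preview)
Your proposal is essentially the paper's own argument: induction for smoothness, Taylor expansion at the argmax $\alpha_n^\star$ to show $E_{n+1}\ge E_n$, and projection onto $\hat\tau^\star$ at the steady state to conclude $\lambda^\star\equiv 0$ and then $\nabla U(\phi^\star(\alpha^\star))=0$. The paper's derivation of $\lambda^\star=0$ is exactly your normal--tangential decomposition (it uses $(\phi^\star)''\perp\hat\tau^\star$ from $\abs{(\phi^\star)'}=\text{const}$ implicitly).

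One small omission: you assume the maximizer $\alpha_n^\star$ is interior. The paper handles the boundary case $\alpha_n^\star\in\{0,1\}$ separately and trivially: since the endpoints $\phi^n(0)=\vec{x}_s$, $\phi^n(1)=\vec{x}_f$ are fixed by the scheme, $U(\phi^{n+1}(0))=U(\phi^n(0))$ and likewise at $1$, so $E_{n+1}\ge E_n$ holds automatically there. You should add this one-line check rather than assume interiority. Your closing remarks about the $\delta s$ threshold depending on a uniform $C^2$ bound on $U$ go beyond what the paper establishes (it, too, leaves ``$\delta s$ sufficiently small'' dependent on $n$ in principle), so that discussion is fine as an aside but is not needed to match the paper's level of rigor.
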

\begin{proof}
Because $\phi^0$ and $U$ are smooth, we have $\phi^1\in C^{\infty}$
from \eqref{eq:semischeme}. The fact that $\phi^n\in C^{\infty}$ can be
deduced by induction.

Let $\alpha_n=\inf\{\alpha\in [0,1]|U(\phi^n(\alpha)) = \max_\alpha U(\phi^n(\alpha))\}$. We have $E_n = U(\phi^n(\alpha_n))$. We first assume $\alpha_n\in(0,1)$.
If $\nabla U(\phi^n(\alpha_n))=0$,  then there exists $\beta_n\in (0,1)$ such that
 $\phi^{n+1}(\beta_{n})=\tilde\phi^{n+1}(\alpha_{n})=\phi^n(\alpha_n)$ by \eqref{eq:semischeme} and \eqref{eq:SchemeWithoutRep}.

So $E_{n+1}\geqslant U(\phi^{n+1}(\beta_{n}))=E_n$. If $\nabla U(\phi^n(\alpha_n))\neq 0$, we have
  \begin{align}\label{eq:propC1}
    E_{n+1} & - E_n  \geqslant U(\tilde\phi^{n+1}(\alpha_{n})) - U(\phi^{n}(\alpha_{n}))\notag\\
          & = \nabla U(\phi^n(\alpha_n))\cdot(\tilde\phi^{n+1}(\alpha_n)-\phi^{n}(\alpha_n)) + O(\abs{\tilde\phi^{n+1}(\alpha_n)-\phi^n(\alpha_n)}^2).
  \end{align}
  When $\alpha = \alpha_n$, $\nabla U\cdot(\phi^n)'=0$. According to scheme \eqref{eq:SchemeWithoutRep}, $\tilde\phi^{n+1}(\alpha_n)-\phi^{n}(\alpha_n)=\delta s\abs{(\phi^n)'}^2\nabla U(\phi^n(\alpha_n))$, $\abs{\tilde\phi^{n+1}(\alpha_n)-\phi^n(\alpha_n)}=O(\delta s)$. From \eqref{eq:propC1}, we have
  \begin{equation}\label{eq:propC2}
    E_{n+1} - E_n \geqslant  \abs{(\phi^n)'}^2\abs{\nabla U(\phi^n(\alpha_n))}^2\delta s + O(\delta s^2)\geqslant 0
  \end{equation}
  when $\delta s^2$ is sufficiently small. So $\{E_k\}$ is nondecreasing.

  If $\alpha_n=0$, $E_{n+1} = \max U(\phi^{n+1}(\alpha))\geqslant U(\phi^{n+1}(0))=U(\vec{x}_s )=U(\phi^n(0))=E_n$. So $E_{n+1}\geqslant E_n$ still holds. The case $\alpha_n=1$ is similar. In all, we have shown that $\{E_k\}$ is nondecreasing.

  If the scheme reaches a steady state $\phi^\star$, then $E_n$ converges to some $E_c$ by the definition
$E_n = \max_\alpha U(\phi^n(\alpha))$ and the assumption that $\phi^n$ converges. The limit $\phi^\star$ solves
\begin{align}\label{eq:IterLimit}
2K_{E_c}(\phi^\star) (\phi^\star)''
+\abs{(\phi^\star)'}^2 (I-\hat\tau \otimes\hat\tau)\cdot\nabla U(\phi^\star)
+\lambda \hat\tau=0,
\end{align}
subject to the constraint $(|(\phi^\star)'|)'=0$, where
\begin{equation*}
K_{E_c}(\phi^\star):=E_c-U(\phi^\star), \quad E_c := \max\limits_{\alpha}U(\phi^\star(\alpha)).
\end{equation*}
Take inner product of both sides of \eqref{eq:IterLimit} with $\hat\tau$, we get the Lagrange multiplier $\lambda(\alpha)\equiv 0$ and thus $\phi^\star$ solves Eq. \eqref{eq:EL0} with $E=E_c$. Let $\alpha^\star=\arg\max\limits_{\alpha} U(\phi^\star(\alpha))$. From \eqref{eq:IterLimit}, we have $\nabla U(\phi^\star(\alpha^\star))=(\hat\tau\cdot\nabla U(\phi^\star(\alpha^\star)))\hat\tau =0$.
\end{proof}

\begin{remark}
A careful inspection of the proof shows that the properties on $E_n$ and $\phi^\star$ hold also for non-gradient systems as long as the path has second order differentiability.
\end{remark}

Let us re-examine the iteration at $\alpha = \alpha_n$.
Since $E_n = U(\phi^n(\alpha_n))$, we have $\nabla U\cdot(\phi^{n})'=0$, and the iteration in \eqref{eq:SchemeWithoutRep} reduces to $\tilde\phi^{n+1}(\alpha_n) = \phi^n(\alpha_n) + \delta s$ $\abs{(\phi^{n})'}^2\nabla U(\phi^n(\alpha_n))$. This can be viewed as a steepest ascent method to find the maximum of $U(\vec{x})$ with step size $\delta s\abs{(\phi^{n})'}^2$. The convergence result shows that $\phi^n(\alpha_n)$ converges to a point with $\nabla U=0$. So the iteration \eqref{eq:semischeme} can be considered as a combination of
a relaxation method to solve the Euler-Lagrange equation
and the steepest ascent method to find the maximum of $U(x)$ .

In practical computations, the path $\phi^n$ is discretized into a collection
of discrete states, $\phi^n_j=\phi^n(jh)$, where $h=1/N$ and $j=0, 1, \cdots, N$,
and the spatial derivatives of $\phi^n$ are discretized using the central difference
formula. This  gives the following algorithm:

\vspace{0.3cm}
\begin{algorithm}\label{alg:1}
\caption{Energy climbing geometric minimization algorithm (EGMA)}
\begin{enumerate}\label{alg:alg1}
\item Given $\phi^0_j=\phi^0(jh)$ for  $j=0,1,\dots N$ and $h=1/N$.  Set $n=0$.

\item For path $\phi^n_j$, let $E_n = \max_j U(\phi^n_j)$ and compute
\begin{align*}
& D\phi_j^n = (\phi_{j+1}^n-\phi_{j-1}^n)/(2h),\\
& D^2\phi_j^n=(\phi_{j+1}^{n} - 2\phi_j^n +\phi_{j-1}^n)/h^2,
\end{align*}
 for $j=1,2,\dots N-1$.

\item Let $\tilde{\phi}_0^{n+1} = \vec{x}_s  $,
$\tilde{\phi}_N^{n+1} =  \vec{x}_f$. Then compute
\begin{align*}
    \tilde{\phi}_j^{n+1} = & \phi_j^n + \delta s\Big[(2E_n-2U)D^2\phi_j^n
+ \abs{D\phi_j^n}^2\nabla U - (\nabla U\cdot D\phi_j^n)D\phi_j^n    \Big]
  \end{align*}
  for $j=1,2,\dots, N-1$. Here $U$ and $\nabla U$ are both evaluated at $\phi_j^n$.

\item Compute $\phi_j^{n+1}$ by interpolating $\{\tilde{\phi}_j^{n+1}\}$
based on the equi-arclength constraint.

\item  Terminate the iteration if $|\phi^{n+1}-\phi^n|/\delta s < TOL$, where $TOL$ is a prescribed tolerance.

\item Set $n: = n+1$ and goto step 2.
\end{enumerate}
\end{algorithm}

\vspace{0.3cm}
  It is worth noting that the explicit numerical scheme in the above algorithm can be
replaced by a semi-implicit scheme, in which the spatial derivative
$\phi''$ is evaluated at $(n+1)\delta s$:
$$\phi_j''\approx D^2\tilde{\phi}_j^{n+1}=
(\tilde{\phi}_{j+1}^{n+1}-2\tilde{\phi}_{j}^{n+1}+\tilde{\phi}_{j-1}^{n+1})/h^2.
$$
This will be a more stable scheme in practice.
At each step, this semi-implicit scheme requires solving a tri-diagonal linear system,
but allows a relatively large time stepsize.
Such linear systems can be solved by fast algorithms, e.g.
the Thomas algorithm, whose computational cost is comparable to the cost of the
explicit scheme.

The purpose of step (4) in EGMA is to redistribute the discrete
images so that they are equally spaced along the path.
This can be done using interpolation techniques
as introduced in the string method \cite{weinan2002string,weinan2007string}.
One simple strategy that uses the linear interpolation
is illustrated as follows.

\vspace{0.3cm}
\begin{algorithm}\label{alg:2}
\caption{Equal-arclength reparametrization}
\begin{enumerate}
  \item Let $L(0)=0$ and
  \[L(j) = \ssum{m=1}{j}\abs{\tilde{\phi}_j^{n+1} - \tilde{\phi}_{j-1}^{n+1}},\quad j=1,2,\dots, N.\]
  \item Compute the equally spaced arc-length parameter
  $l(j) = jL(N)/N$ for $j=0,1,\dots, N$.
  \item For each $j=1,2,\dots, N-1$, find $i \in\{0,1,\dots, N-1\}$
 such that $L(i)<l(j)\leqslant L(i+1)$, then compute $\phi_j^{n+1}$ as follows:
  \[\phi_j^{n+1} = \tilde{\phi}_i^{n+1} + (l(j)-L(i))\frac{\tilde{\phi}_{i+1}^{n+1}-\tilde{\phi}_i^{n+1}}{\abs{\tilde{\phi}_{i+1}^{n+1}-\tilde{\phi}_i^{n+1}}}.\]
\end{enumerate}
\end{algorithm}

\vspace{0.3cm}
The stopping criterion in  EGMA   is
based on the  rate of change of  $\phi^n$. In the computations below,
we set the tolerance $TOL =10^{-6}$.

\begin{remark}
 EGMA can also be applied to non-gradient type of systems
by solving the following  semi-discrete scheme
\begin{align}\label{eq:NGsemischeme}
  \phi^{n+1} = & \phi^n + \delta s \Big(2K_n(\phi^n)(\phi^n)'' + \sqrt{2K_n(\phi^n)}(\nabla\vec{b}^\T-\nabla\vec{b})\cdot(\phi^n)'\nonumber\\
   &+ \abs{(\phi^n)'}^2 (I-\hat\tau^n\otimes\hat\tau^n)\cdot \nabla U(\phi^n)\Big) +\lambda^{n+1} \hat\tau^{n+1},
\end{align}
where $K_n$ is defined in \eqref{eq:EnKn}. We remark that
in this semi-discrete scheme, the smoothness of $\phi^n$ does not guarantee
the smoothness of  $\phi^{n+1}$. However, this does not introduce any difficulty in applying the
full discretization scheme of \eqref{eq:NGsemischeme} in numerical computations.
\end{remark}

\section{Numerical examples}\label{sec:Results}

We apply the numerical method to three examples: an example in 2D,
the re-arrangement of seven-atom cluster, and the Maier-Stein model.

In these examples, we use
\[\lambda(\alpha) = \frac{\sqrt{2E-2U(\phi(\alpha))}}{\abs{\phi'}}\]
as the indicator of $\lambda$-critical states. Here, $\phi(\alpha)$ is the path we get after iteration, $E=\max_{\alpha} U(\phi)$. The local minima of $\lambda(\alpha)$ are $\lambda$-critical states. When $\epsilon=0$, metastable states and transition states are all $\lambda$-critical states($\lambda(\alpha)=0$ at these states). As proved in proposition \ref{prop:deltaT}, the transition spent longer time at these states than others.

\subsection{A 2D example}
We consider the potential $V$ in two-dimensional space:
\[
  \begin{aligned}
    V(&x,y) = 4(x^2+y^2-1)^2y^2-\exp(-4((x-1)^2+y^2)) -  \exp(-4((x+1)^2+y^2))\\
           &+ \exp(8(x-1.5)) +\exp(-8(x-1.5)) + \exp(\gamma(y+0.25)) + 0.2\exp(-8x^2).
  \end{aligned}
\]
The potential $V(x, y)$ has two minima at $a$ and $b$, and two saddle points at
$s_1$ and $s_2$, as shown in \cref{fig:3}.
For a properly chosen $\gamma$($\gamma\approx 12.16$), the two saddle points have the same potential energy.
The FW functional (i.e. when $\epsilon=0$)
has two minima for the transition between
$a$ and $b$, one being a straight line through $s_1$ and the other a circular
path through $s_2$. The two paths have the same action, and they are also the
minimum energy paths for the potential $V$.

\begin{figure}
  \centering
  \scalebox{0.4}{\includegraphics{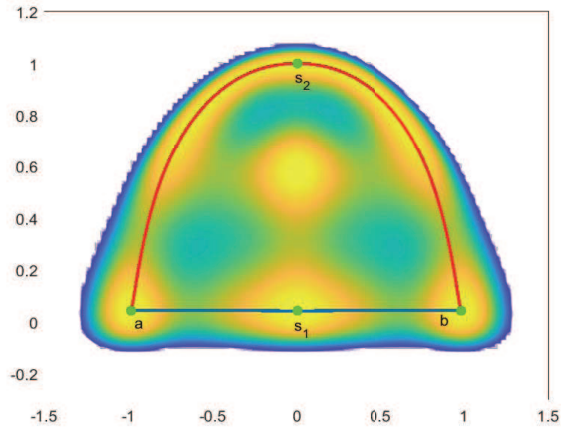}}
  \caption{(Color Online). The heat plot of $V(x,y)$ and two transition paths when $\epsilon=0$ (the FW theory). The blue and red lines correspond to the direct and circular transition paths, respectively. The green dots correspond to the minima ($a$ and $b$) and saddle points ($s_1$ and $s_2$) of $V(x,y)$. The two saddle points $s_1$ and $s_2$ have the same energy when $\gamma\approx 12.16$. The background color indicates the value of $V$, where the yellow color corresponds to small values of $V$, and blue color corresponds to large values of $V$.}\label{fig:3}
\end{figure}

Notice that when $\epsilon=0$, all critical points of $V$, i.e. the
states $\vec{x}$ with $\nabla V(\vec{x})=0$,
are critical points (maxima) of $U$. When $\epsilon>0$ but small,
the local/global maxima of $U$ are still in the neighborhood of the critical points
of $V$. For example, as shown in  \cref{fig:4},
$U$ attains the maxima near the
minima, the saddle points and the maxima of $V$ when $\epsilon=0.05$.
Among these maxima of $U$, the one near $s_2$ is the global maximum, thus the
critical point of $U$.

For $\epsilon=0.05$, we solved \eqref{eq:relax}
for the transition path between $a$ and $b$ using  EGMA.
Using different initial path in the iteration, we obtained
two paths connecting $a$ and $b$: one passing through the three local maxima
of $U$ near $a$, $s_1$ and $b$, respectively, and the other
passing through the global maxima of $U$ near $s_2$. Both satisfies the Euler-Lagrange equation \eqref{eq:EL0}, thus are extremals of $\hat{S}_E[\phi]$. However, the two paths have different energy:
$E_c\approx 1.0849$ for the first one and $E_c\approx 1.5516$ for the second one.
 These energy values are also the maximum of $U$ along the corresponding
path. From the analysis in previous sections, the path passing through the
global maximum of $U$
is the graph limit of the minimizers of $S_T^\OM[\psi]$ as $T\rightarrow\infty$.

\begin{figure}
  \centering
  \scalebox{0.5}{\includegraphics[angle=90]{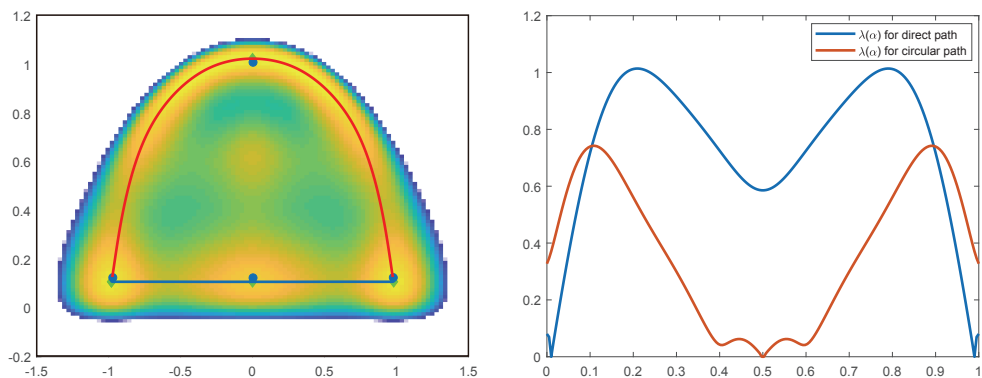}}
  \caption{(Color Online). Left panel: Two solutions of the Euler-Lagrange
equation \eqref{eq:EL0} with $\epsilon=0.05$. The blue straight line passes through local maxima of $U(x,y)$ close to the minima of $V$. The red circular line passes through the global maxima of $U$ that are close to the upper saddle of $V$. The background is the heat plot of $U$, in which the yellow color corresponds to
large values of $U$, and blue color corresponds to small values of $U$, which is opposite to \cref{fig:3}. The blue circles indicate the minima and saddle points of $V$, and the green diamonds are local maxima of $U$. Right panel: The indicator $\lambda(\alpha)$ of two paths. The blue and red curves correspond to the straight and circular paths, respectively. The indicator $\lambda=0$ corresponds to the states with $U(\phi)=E_c$.}\label{fig:4}
\end{figure}

When $\epsilon$ is relatively large, on the other hand,
$U$ may have a different landscape.
For example, when $\epsilon=0.5$, the critical point of $U$ near $s_2$ disappears and
two new critical points appear slightly far away from this saddle point.
\cref{fig:5} shows that the converged path passes through
these two critical points with energy $E_c\approx 23.9939$.
This result shows that with a finite $\epsilon$, the OM functional may
give quite different $\lambda$-critical states compared with the transition states for the zero noise limit.

\begin{figure}
\centering
\scalebox{0.48}{\includegraphics[angle=90]{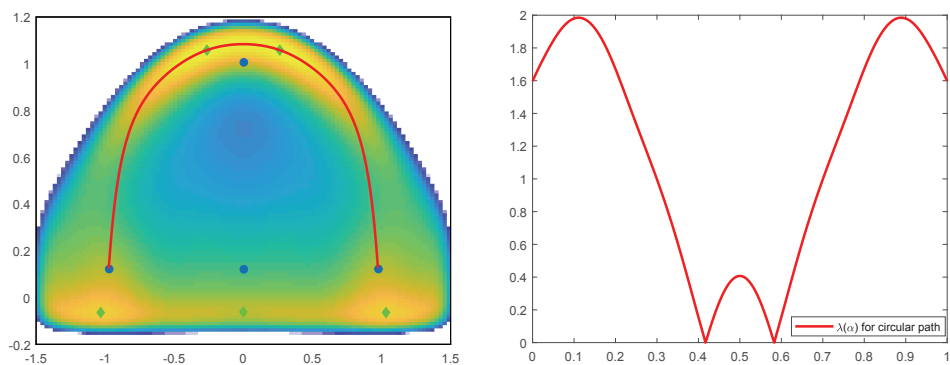}}
\caption{(Color Online). Left panel: The circular transition path
when $\epsilon=0.5$. The path passes through two critical points
that are away from the upper saddle point of $V$. The background is the heat plot of $U$.
Right panel: The indicator $\lambda(\alpha)$. The zeros of $\lambda$ corresponds to two critical points. }\label{fig:5}
\end{figure}

\subsection{Rearrangement of a seven-atom cluster}

In this example, we consider the rearrangement of a cluster consisting of
seven atoms. This problem was studied
in \cite{dellago1998efficient, weinan2002string, pinski2010transition}
using different approaches. The positions of the atoms are denoted
by $\vec{x}^{(i)}\in \mathbb{R}^2, i=1,2,\dots 7$. The state of the system
in the configuration space is represented by a
vector $\vec{x}=(\vec{x}^{(1)},\dots\vec{x}^{(7)})\in\mathbb{R}^{14}$.
The interactions between the atoms are modeled by the Lennard-Jones potential:
\begin{equation}\label{eq:LJP}
  V(\vec{x}) = 2\delta\sum_{i\neq j}\left(\frac{\sigma}{r_{ij}}\right)^{12} - \left(\frac{\sigma}{r_{ij}}\right)^{6},
\end{equation}
where $\delta,\sigma>0$, $r_{ij} = |\vec{x}^{(i)}-\vec{x}^{(j)}|$
is the distance between $\vec{x}^{(i)}$ and $\vec{x}^{(j)}$.
The global minimum of this potential corresponds to the configuration in which
an atom is surrounded by the other six in a hexagonal shape.
We compute the pathway along which
the central atom (colored in white in Figs \cref{fig:5.5}-\cref{fig:8})
migrates to the surface using  EGMA  with different values
of $\epsilon$. We use $N=192$ points to discretize the path, and
take $\delta=\sigma=1$ in the potential.

We first show the transition path and transition states inferred by
the FW theory, i.e. the case when $\epsilon=0$, in  \cref{fig:5.5}.
The curve in  \cref{fig:5.5} shows the indicator $\lambda$.
As shown in the FW theory \cite{heymann2008geometric},
the metastable states and transition states along the transition path are given by the points with $\lambda(\alpha)=0$. They are all $\lambda$-critical states.
We plot the configuration of each $\lambda$-critical state in  \cref{fig:5.5}.
These $\lambda$-critical states are also the minima or saddle points of $V$ and they are
the same as those obtained
in the earlier work \cite{dellago1998efficient,weinan2002string}.

\begin{figure}
  \centering
  \scalebox{0.4}{\includegraphics[angle=90]{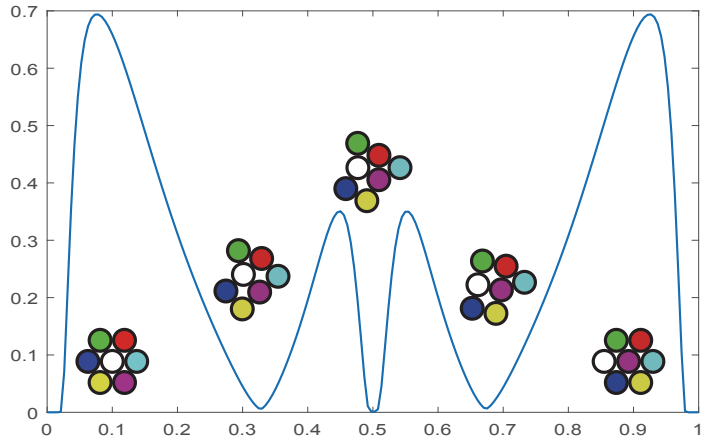}}
  \caption{(Color Online). The indicator $\lambda(\alpha)$
for the MPP and the corresponding $\lambda$-critical states when $\epsilon=0$
(i.e. the FW functional).
The $\lambda$-critical states are the states where $\lambda$ attains local minima.}
\label{fig:5.5}
\end{figure}

\cref{fig:6} shows the indicator $\lambda(\alpha)$ along the path obtained
with $\epsilon=0.01$.
As we have mentioned, each local minimum of $\lambda$ corresponds to a $\lambda$-critical state. These states have different interpretations. The initial and final states, which correspond to the case where one atom is surrounded by  the other six,  are the minimum of $\lambda$. This means that they are more stable than the other three $\lambda$-critical states. Similarly, the symmetric state (observe with a small tilt angle) in the middle of the path  is more stable than another two asymmetric ones aside. Because $\epsilon$ is quite small here, the $\lambda$-critical state we get are  similar as those obtained by FW theory.

\begin{figure}
  \centering
  \scalebox{0.4}{\includegraphics[angle=90]{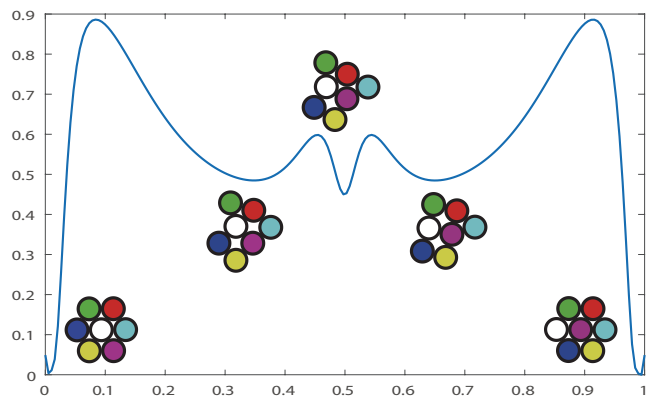}}
  \caption{(Color Online). The indicator $\lambda(\alpha)$ for the  MPP and
the corresponding $\lambda$-critical states when $\epsilon=0.01$.
The transition behavior resembles the case when $\epsilon=0$.
}\label{fig:6}
\end{figure}

We also applied  EGMA  to the cases $\epsilon=0.1$ and $\epsilon=1$, and
the numerical results are shown in Figs. \cref{fig:7} and \cref{fig:8}, respectively.
In  \cref{fig:7}, we observe similar transition pattern as
in the case $\epsilon=0.01$. Although the critical points of $U$ that
the path goes through are slightly perturbed from the initial and final states,
their configurations are qualitatively indistinguishable from those states.
The symmetric state in the middle remains to be a $\lambda$-critical state
as a local minimum of the indicator $\lambda$.
In the case $\epsilon=1$ (shown in Fig, \cref{fig:8})), the two asymmetric states inside replace the initial and final states as the critical states. Furthermore, the middle symmetric state is no longer a $\lambda$-critical state. During the transition, the atoms have slight overlapping due to strong noise perturbations. Similar phenomena was also observed in \cite{pinski2010transition}.

\begin{figure}
  \centering
  \scalebox{0.4}{\includegraphics[angle=90]{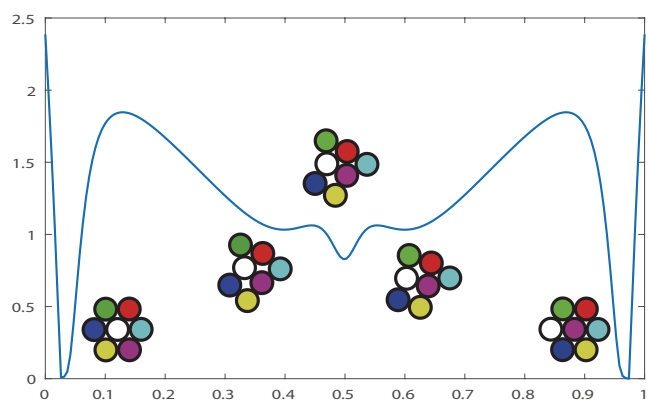}}
  \caption{(Color Online). The indicator $\lambda(\alpha)$ for the  MPP and
the corresponding $\lambda$-critical states when $\epsilon=0.1$.
The qualitative behavior remains unchanged compared with the case $\epsilon=0.01$.}\label{fig:7}
\end{figure}

\begin{figure}
  \centering
  \scalebox{0.4}{\includegraphics[angle=90]{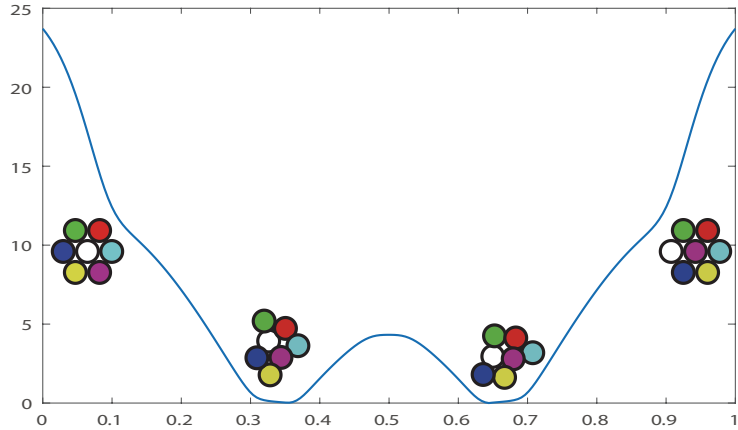}}
  \caption{(Color Online). The indicator $\lambda(\alpha)$ for the  MPP and
the corresponding $\lambda$-critical states when $\epsilon=1$.
The two asymmetric states shown in the figure
replace the initial and final states
as the critical states. The atoms slightly overlap due to the strong noise.}\label{fig:8}
\end{figure}

\subsection{The Maier-Stein model}

In this example, we apply the numerical method to a non-gradient system.
The Maier-Stein model is a standard non-gradient diffusion process
that has been carefully studied \cite{maier1996scaling}.
The drift term $\vec{b}$ in Eq. \eqref{eq:diff} is
\begin{equation}\label{eq:MS}
  \vec{b}(x,y) = \begin{bmatrix}
                   x - x^3 -\beta xy^2 \\
                   -(1+x^2)y
                 \end{bmatrix},
\end{equation}
where $\beta>0$ is a parameter. The system is of the gradient type only when
$\beta=1$. For any $\beta>0$, the deterministic dynamics
$\dot{\vec{x}}=\vec{b}(\vec{x})$ has two stable fixed points at $(\pm 1,0)$
 and one saddle point at $(0,0)$.
The path potential $U(x,y)$ is given by
\begin{equation}\label{eq:MSU}
  U(x,y) = 4\epsilon x^2 + \epsilon\beta y^2 - \frac{1}{2}\big[(x-x^3-\beta xy^2)^2+(1+x^2)^2y^2\big].
\end{equation}

In the limit $\epsilon\to 0$, Maier and Stein studied the transition path from $(-1,0)$ to $(1,0)$
for various $\beta$, and found two transition patterns \cite{maier1996scaling}.
When $\beta<4$, the path is the line segment connecting $(-1,0)$
and $(1,0)$, while when $\beta>4$, the path is composed of two parts:
one from $(-1,0)$ to $(0,0)$ following the curved heteroclinic orbit and the other
from $(0,0)$ to $(0,1)$ following the line segment.

Using  EGMA, we can study the same transition
for $\epsilon>0$.  \cref{fig:1} shows the numerical results for
$\epsilon=0.1$. We also obtain two types of transition paths. However,
the critical value of $\beta$ that separates the two pattens is lowered to
a value between 3.4 and 3.5. More interestingly, as predicted by the
theoretical result, the paths now pass through a critical point $(x_c,0)$ which
is located to the left of $(-1, 0)$.

\begin{figure}
  \centering
  \scalebox{0.45}{\includegraphics{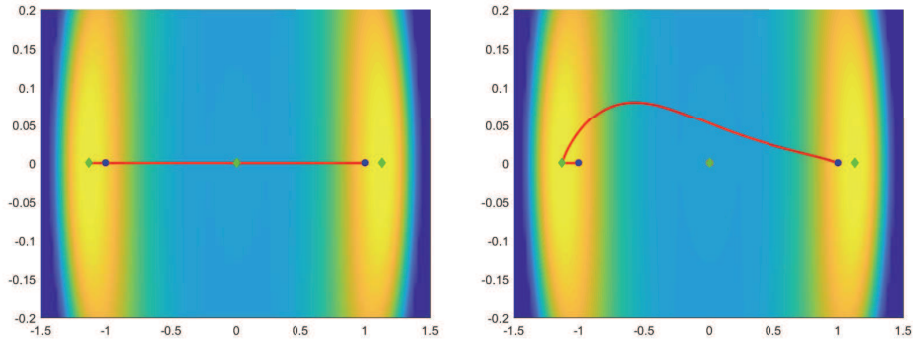}}
  \caption{(Color Online). The MPP of the OM functional for the Maier-Stein model, where $x_s = (-1,0)$, $x_f=(1,0)$ and $\epsilon=0.1$. Left panel: $\beta=3.4$. Right panel: $\beta=3.5$.  Blue circles are fixed points of $\dot{\vec{x}}=\vec{b}(\vec{x})$. Green diamonds are local maximum points of $U(x)$. The background is the heat map of $U(\vec{x})$.
}\label{fig:1}
\end{figure}

The critical point $(x_c,0)$ can be calculated explicitly as $x_c^2=(2+\sqrt{1+24\epsilon})/3$. The corresponding energy is given by $E_c=U(x_c,0)$.
This helps us study the convergence property of our   EGMA. We set $\epsilon=0.1$, $\delta s=0.01$, $\beta=10$ and run 500 iterations
with different spatial resolution $N$.
We compute the difference between $E_c$ and $E_n$ in each iteration step.
The convergence history of the energy in the first 100 iterations
is shown in the upper part of Table \ref{tab:itererr} when $N=1000$.
We observe that $E_n$ increases monotonically towards $E_c$.
A quantitative fitting shows that
$|E_{n}-E_c|\approx O(n^{-2}) + e_N$, where
$e_N$ is the difference of the limit of $E_n$, denoted by $E^\star_N$, and
 $E_c$. The error $e_N$ is mainly determined by the spatial resolution $N$,
which is also shown in Table \ref{tab:itererr}. A fitting
shows that $|E^\star_N-E_c|\approx O(h^{2.314})$. This suggests
approximately second order convergence of the energy parameter
with second order spatial discretization.
We will leave the rigorous convergence analysis to the future study.

\begin{table}[h]
\renewcommand{\arraystretch}{1.2}
\caption{Upper part: Convergence of energy parameter with respect to the iteration number $n$. Lower part: Convergence of the limit energy with respect to the spatial resolution $N$.}\label{tab:itererr}
\begin{center}
\begin{tabular}{c|ccccc}
  \hline\hline
  IterNum $n$ & 10 & 20 & 30 & 50 & 100\\
  \hline
  $E_c-E_n$ &   $2.1\times 10^{-3}$ & $3.9\times 10^{-4}$ & $1.5\times 10^{-4}$ & $8.1\times 10^{-5}$ & $6.2\times 10^{-5}$ \\
    \hline\hline
  SpatialRes $N$ & 100  & 1000 & 2000 & 4000 & 5000\\
  \hline
  $E_c-E^\star_N$ & $6.0\times 10^{-5}$ & $2.0\times 10^{-5}$ & $3.0\times 10^{-6}$  & $1.2\times 10^{-7}$ & $5.3\times 10^{-9}$\\
  \hline
\end{tabular}
\end{center}
\end{table}

From Eq.~\eqref{eq:MSU} we have
\[U(0,y)=(\epsilon\beta-\frac{1}{2})y^2,\]
which shows that $U(x,y)$ is unbound along the $y$-axis
when $\epsilon\beta>\frac{1}{2}$. This violates the Assumption \ref{asm:decay}.
In principle this is beyond our theoretical framework. But it is still interesting to apply  EGMA  in this case. Based on the result
in Proposition \ref{lem:liminfE}, the energy parameter $E$
will tend to infinity while the path may diverge along $y$-axis.
The numerical results confirm this conjecture,
although the current theory does not cover this case.
The divergent curves are shown in \cref{fig:diverge}.

\begin{figure}
  \centering
  \scalebox{0.5}{\includegraphics{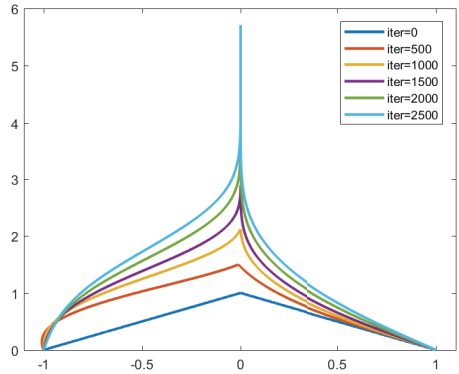}}
  \caption{(Color Online). Divergence of the $y$-component of
$\phi^n(\alpha)$ in the iterations when $\epsilon\beta>\frac{1}{2}$. Here we choose $\epsilon=1$, $\beta=10$.}\label{fig:diverge}
\end{figure}

\section{Conclusion and discussion} \label{sec:con}

In this paper, we studied the minimization problem of the OM
functional $S^{\OM}_T[\psi]$ when $T$ tends to infinity. Under mild conditions, we rigorously showed that
the infimum of $S^{\OM}_T[\psi]$ over $\psi$ and $T$ is always $-\infty$
and it occurs only when $T\to+\infty$. Moreover, we proved that
when $T\to +\infty$, the minimizer of $S^{\OM}_T[\psi]$ has convergent
subsequence in the configuration space. With the help of Maupertuis principle,
the problem of finding the graph limit of the minimizers of the OM functional
is translated into that of finding an extremal of $\hS{E}[\phi]$
with the energy $E^\star = \max_{\vec{x}} U(\vec{x})$,
where $U$ is the path potential.

Based on these theoretical results, we designed an efficient
algorithm (EGMA) to find the energy $E^\star$ and solve the Euler-Lagrange
equation simultaneously.
This algorithm can be viewed as a nontrivial extension of
the geometric minimum action method (gMAM), which was proposed
 for the double minimization problem of  the FW functional at zero temperature
\cite{heymann2008geometric}.
In gMAM, the energy $E$ is fixed at zero,
and the corresponding optimal transition
time $T=\int_{0}^{1}(2E-2U(\phi))^{-\frac{1}{2}}\abs{\phi'}\d\alpha$
can be either finite or infinite.
We note that the method can be extended to the case of non-zero $E$,
which corresponds to a prescribed value for the transition time.
 In EGMA, the energy changes step by step to
ensure that the time $T$ always goes to infinity.
This algorithm was successfully applied to several typical rare event examples.
Although the rigorous proof of the convergence of the algorithm is still absent,
our numerical examples in Sections 6.1 and 6.2 demonstrated that it
converges to an extremal of $\hS{E^\star}[\phi]$ in practice.

Some possible extensions and unsolved problems arise naturally based
on the theoretical analysis of the current paper. Below we list some of them.

\begin{enumerate}
\item[(i)] As mentioned in the remarks, the conclusions that
$\inf_{T}\inf_{\psi}S^{\OM}_T[\psi]=-\infty$ and the minimizer of $S^{\OM}_T[\psi]$
has convergent subsequence in the configuration space can be generalized
to non-gradient systems with drift $\vec{b}(\vec{x})$ under suitable assumptions.
However, the proof of the key result that $\phi^\star$ is an extremal
of $\hS{E^\star}[\phi]$ relies on the gradient form of $\vec{b}(\vec{x})$.
Indeed, the proof of Lemma \ref{lem:finitjump} holds because
the value of $\hS{E}[\phi]$ is non-negative. Besides, we used an
analog of Hartman-Grobman theorem to transfer the uniform BV property of $\phi_k'$ for linearized problem to  the non-linear case in the neighborhood of the  critical point $\vec{x}_c$. This approach requires that $(\vec{x}_c,0)$ is a hyperbolic fixed point of the first order system \eqref{eq:nlfoel}. For non-gradient case, the first order system \eqref{eq:nlfoel} becomes
\begin{equation}\label{eq:NGnlfoel}
  \begin{aligned}
  \dot{\Psi}_1 &= \Psi_2,\\
  \dot{\Psi}_2 &= -\nabla^2U(\Psi_1) - (\nabla\vec{b}^\T(\Psi_1)-\nabla\vec{b}(\Psi_1))\Psi_2.
  \end{aligned}
\end{equation}
 The point $(\vec{x}_c,0)$ is still a fixed point. However, the eigenvalues of the Jacobian of \eqref{eq:NGnlfoel} at $(\vec{x}_c,0)$ have non-zero imaginary past. To study the behavior of $\phi_k'$ near the critical point, we might need more delicate  study of the dynamics \eqref{eq:NGnlfoel} on center manifold and more advanced result on linearization. So how to extend Theorem \ref{thm:extremal} to
 the non-gradient case remains an interesting issue.

\item[(ii)] Presumably, the Freidlin-Wentzell functional can be viewed as a limit of Onsager-Machlup functional when $\epsilon\to 0$. However, we have $\lim_{\epsilon\to 0}\inf_{T}$ $\inf_{\psi}S^{\OM}_T[\psi]=-\infty$ by the Proposition \ref{prop:D}. This suggests that a naive process is not valid to establish such connection. A possible alternative to investigate the limit of OM functional is to relate $T$ and $\epsilon$ by a function $T=T(\epsilon)$ and study the convergence of $\inf_{\psi}S^{\OM}_{T(\epsilon)}[\psi]$ as $\epsilon\to 0$. This idea has been partially studied in \cite{pinski2012gamma}. They showed that for the scaling $T=\epsilon^{-\alpha}$, $0<\alpha\leqslant 1$, the OM functional $\Gamma$-converges to a functional completely characterized by the FW theory. However, for a more general and physical scaling $T=T(\epsilon)$, the convergence of OM functional and its minimizer is not clear. The renormalized OM functional \eqref{eq:ROM} might be a good candidate to perform such analysis.

\item[(iii)] The discretization scheme described in  EGMA
can be further improved. For example, one may discretize $\hS{E}[\phi]$ first then
use some optimization methods like quasi-Newton or conjugate gradient type methods
to search for the extremal. However, the algorithm we proposed here
combines the iteration for solving Euler-Lagrange equation and
finding the maximum of $U(\vec{x})$ simultaneously. It has the advantage
that we can compute the optimal energy parameter and the MPP at the same time.
 This strategy may not apply  for the optimization methods.
Designing more efficient numerical methods to perform these
two tasks together is an issue of practical interests.
\end{enumerate}

In summary, the current work provides new ideas on the FW-OM dilemma. It will be instructive to further  study the FW-OM connections with this new perspective.


\Acknowledgements{The authors are grateful to Profs.  Shaobo Gan, Eric Vanden-Eijnden, Jiazhong Yang and Shulin Zhou for stimulating discussions. Special thanks are due to Prof. Wenmeng Zhang for his patient explanation about their recent progress on $C^{1,\beta}$-linearization problem. The work of T. Li is supported by the NSFC under grants No. 11421101, 91530322 and 11825102. The work of W. Ren is partially supported by Singapore MOE ACRF grants R-146-000-267-114 (Tier-1) and R-146-000-232-112 (Tier-2), and NSFC grant No. 11871365. The work of X. Li is supported by  the Construct Program of the Key Discipline in Hunan Province.}



\bibliographystyle{plain}
\bibliography{OM}

\begin{appendix}

\section{Proof of Proposition \ref{prop:condev}}
We now prove Proposition \ref{prop:condev} in this Appendix. This relies on a series of lemmas. Some of them are quite technical. Recall that  $\psi_k$ is the minimizer of $S^{\OM}_{T_k}[\psi]$, where $T_k\to +\infty$ as $k\to \infty$. The corresponding energy $E_k=E(T_k)$ and the extremal of $\hS{E_k}[\phi]$ is $\phi_k$ for $k=1,2,\dots$. By theorem \ref{prop:conv}, we may assume $E_k\to E^\star$ and $\phi_k\to \phi^\star$ without loss of generality.

Our main idea is to show that each component of $\{\phi_k'\}$ has uniformly bounded variations. Denote $\phi_k'^{(i)}$ the $i$th component of $\phi'_k$. We have the total variation
  \begin{equation}\label{eq:componentTV}
  \begin{aligned}
    \bigvee\limits_0^1\phi_k'^{(i)}(\alpha) &= \sup\limits_{\Delta}\sum_{j=0}^{n} \big|\phi_k'^{(i)}(\alpha_{j+1})-\phi_k'^{(i)}(\alpha_j)\big| \\
    &\leqslant\sup\limits_{\Delta}\sum_{i=1}^{d}\sum_{j=0}^{n} \big|\phi_k'^{(i)}(\alpha_{j+1})-\phi_k'^{(i)}(\alpha_j)\big|\\
    &\leqslant\sup\limits_{\Delta}\sum_{j=0}^{n}\sqrt{d}\left(\sum_{i=1}^{d}\big|\phi_k'^{(i)}(\alpha_{j+1})-\phi_k'^{(i)}(\alpha_j)\big|^2\right)^{\frac{1}{2}}\\
    &=\sup\limits_{\Delta}\sum_{j=0}^{n}\sqrt{d}\abs{\phi'_k(\alpha_{j+1})-\phi'_k(\alpha_j)},
  \end{aligned}
  \end{equation}
  where $\Delta$ is a partition of $[0,1]$ with subdivision points $0=\alpha_0<\alpha_1<\alpha_2\dots<\alpha_{n+1}=1$. We only need to show that
  \begin{equation}\label{eq:summation}
    \sup\limits_{\Delta}\sum_{j=0}^{n}\abs{\phi'_k(\alpha_{j+1})-\phi'_k(\alpha_j)}
  \end{equation}
  is uniformly bounded.

Next, we divide $[0,1]$ into intervals in which $\phi_k\in C^2$. We need the following lemma.

\begin{lemma}\label{lem:finitjump}
  There is a constant $K>0$ which is independent of $T$, such that for any $T>0$, $\#\{t\in[0,T]|U(\psi_T(t))=E(T)\}\leqslant K$.
\end{lemma}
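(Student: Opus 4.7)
The plan is to prove the lemma by contradiction, combining ODE uniqueness with two surgery arguments that exploit the minimality of $\psi_T$. By the energy conservation identity \eqref{eq:conservation}, the set $\{t\in[0,T] : U(\psi_T(t))=E(T)\}$ coincides with the set of turning points $\{t : \dot{\psi}_T(t)=0\}$, so it suffices to bound the number of turning points uniformly in $T$. I would first rule out degenerate configurations: if some turning point $t^\star$ satisfied $\nabla U(\psi_T(t^\star))=0$, then the Euler--Lagrange equation \eqref{eq:EL} is a second-order ODE with equilibrium data $(\psi_T(t^\star),0)$, forcing $\psi_T\equiv\psi_T(t^\star)$ and contradicting $\vec{x}_s\neq\vec{x}_f$; hence $\nabla U$ is nonvanishing at each turning point and the turning points are isolated in time. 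Similarly, two distinct turning times $t_i<t_j$ must correspond to distinct positions $\vec{x}_i:=\psi_T(t_i)\neq\psi_T(t_j)=:\vec{x}_j$; otherwise ODE uniqueness from the data $(\vec{x}_i,0)$ makes $\psi_T|_{[t_i,t_j]}$ a nontrivial closed orbit, and replacing it by the constant path $\tilde{\psi}\equiv\vec{x}_i$ on $[t_i,t_j]$ yields a competitor in $\bar{C}_{\vec{x}_s}^{\vec{x}_f}[0,T]$ whose action is strictly smaller by $\hS{E(T)}|_{[t_i,t_j]}>0$ via the Maupertuis decomposition \eqref{eq:ST=S0-ET}, contradicting minimality.

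With the turning positions distinct and confined to the compact set $\mathcal{L}_{E(T)}\cap\overline{B(0,R)}$ (by Corollary \ref{col:bound}), I would invoke Assumption \ref{asm:levelset} to get uniformly finitely many connected components of $\mathcal{L}_{E(T)}$ for $E(T)$ in a compact neighborhood of $E^\star$. A pigeonhole argument then guarantees that once the number of turning points exceeds this threshold, two of them $\vec{x}_i,\vec{x}_j$ must lie in the same connected component $B$. The second surgery replaces $\psi_T|_{[t_i,t_j]}$ by a slow uniform-speed traversal of a path $\sigma\subset B$ from $\vec{x}_i$ to $\vec{x}_j$, along which $U\equiv E(T)$, and compares the two actions via \eqref{eq:ST=S0-ET}. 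Together with the uniform upper bound $\hS{E(T)}[\phi_{E(T)}]=\int_0^T|\dot{\psi}_T|^2 \d t \leqslant C$ (obtained from $|\dot{\psi}_T|^2=2(E(T)-U(\psi_T))$ combined with Corollary \ref{col:bound} and Assumption \ref{asm:uniformlybounded}) and a suitable lower bound on the segment contribution $\hS{E(T)}|_{[t_i,t_j]}$, this should yield a strict decrease of the action, contradicting the minimality of $\psi_T$ and thereby forcing the number of turning points to remain below the threshold.

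The main obstacle will be making the second surgery quantitative. The kinetic cost $L_\sigma^2/(2(t_j-t_i))$ of the surrogate path can compete with the geometric contribution $\hS{E(T)}|_{[t_i,t_j]}$ when $t_j-t_i$ is short -- exactly the regime that occurs when turning points cluster near a critical point of $U$. Overcoming this will require selecting the pair $(i,j)$ so that $t_j-t_i$ is sufficiently large (an averaging argument provides such a pair whenever the total number of turning points is large) together with a sharp lower bound on $\hS{E(T)}|_{[t_i,t_j]}$ based on the non-degenerate quadratic behavior of $U$ near critical points guaranteed by Assumption \ref{asm:nondegenerate}.
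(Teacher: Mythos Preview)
Your overall plan shares its skeleton with the paper's proof: both arguments use Assumption~\ref{asm:levelset} to decompose $\mathcal{L}_{E(T)}$ into finitely many closed connected pieces $B_i$ and then argue that $\psi_T$ cannot hit the same component twice. The divergence is in how that last point is established, and the paper's route is considerably shorter than your surgery.

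The paper observes that the restriction $\tilde\psi:=\psi_T|_{[t_1,t_2]}$ is itself a minimizer of $S^{\OM}_{\tilde T}$ with endpoints $\vec{x}_1,\vec{x}_2\in B_i$, hence (via Maupertuis) a geodesic of the Jacobi metric $d\rho_E=\sqrt{2(E(T)-U)}\,|d\phi|$. Since $B_i$ is connected and the Jacobi metric vanishes identically on $B_i$, any length-minimizing curve from $\vec{x}_1$ to $\vec{x}_2$ has zero Jacobi length and hence must stay in $B_i$; but then $U(\tilde\psi)\equiv E(T)$ and the transit-time integral $\int|d\tilde\psi|/\sqrt{2(E(T)-U(\tilde\psi))}$ diverges, contradicting $\tilde T<\infty$. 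No competitor is constructed and no quantitative comparison is needed. Having shown each $B_i$ is visited at most once, the paper bounds $K$ directly via the uniform arclength bound $M$ and the minimum inter-component distance $l$: $M\geqslant(K-1)l$.

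Your second surgery, by contrast, requires a delicate inequality $\hS{E(T)}\big|_{[t_i,t_j]}>L_\sigma^2/(2(t_j-t_i))$, and you are right to flag this as the main obstacle. The averaging argument you propose (finding a pair with $t_j-t_i$ large) does not obviously mesh with the pigeonhole constraint that the pair lie in the same component, and the lower bound on $\hS{E(T)}\big|_{[t_i,t_j]}$ via Assumption~\ref{asm:nondegenerate} is not worked out. As written this step is a genuine gap; the paper's transit-time contradiction circumvents it entirely. Separately, your appeal to ``uniformly finitely many connected components of $\mathcal{L}_{E(T)}$ for $E(T)$ in a compact neighborhood of $E^\star$'' is not what Assumption~\ref{asm:levelset} says: it gives finiteness for each fixed $E$, with no uniformity in $E$. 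The paper instead gets $T$-independence of $K$ from the arclength--distance inequality above (though one may note that $l$ itself could in principle depend on $E(T)$).
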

\begin{proof}
  By Assumption \ref{asm:levelset}, we have the decomposition
  \[\Big\{\vec{x}|U(\vec{x})=E(T)\Big\} = \bigcup\limits_{i=1}^N B_i,\]
  where $B_i\cap B_j=\emptyset$ for $i\neq j$, $B_i$ is closed and connected. We are going to show that $\psi_T$ passes through each $B_i$ at most once.

 Assume that there exist $0<t_1<t_2<T$ such that $\vec{x}_1=\psi_T(t_1)\in B_i$, $\vec{x}_2=\psi_T(t_2)\in B_i$. Define $\tilde{\psi}_{\tilde T}(t)=\psi_T(t+t_1)$ for $t\in[0,t_2-t_1]$, where $\tilde T=t_2-t_1$. $\tilde{\psi}_{\tilde T}$ must be the minimizer of $S_{\tilde T}[\psi]$ with boundary conditions $\psi(0)=\vec{x}_1$, $\psi(\tilde T)=\vec{x}_2$.
Since $\tilde{\psi}_{\tilde{T}}$ is the minimizer of $S_{\tilde{T}}[\psi]$,  by Maupertuis principle we know that the minimizer $\tilde{\psi}_{\tilde{T}}$ induces a geodesic on $B_i$ from $\vec{x}_1$ to $\vec{x}_2$ with Riemannian metric \cite{Rota1990Mathematical}(Theorem in page 246)
  \[\d\rho_E = \sqrt{2E(T)-U(\phi)}\abs{d\phi}.\]
$\tilde{\psi}_{\tilde{T}}$ must lie on $B_i$ since it minimizes the distance induced by $\rho_E$. However, we have
  \[T\geqslant\int_{\vec{x}_1}^{\vec{x}_2}\frac{\abs{\d \tilde{\psi}_{\tilde{T}}}}{\sqrt{2E(T)-2U(\tilde{\psi}_{\tilde{T}})}}=+\infty.\]
  This contradiction implies that $\psi_T$ passes through $B_i$ at most once.

 Since there are only finite components $B_i$ of level set $\{\vec{x}|U(\vec{x})=E(T)\}$, there is a positive lower bound of the distance between each two components
  \[\min\limits_{i\neq j}\mbox{dist}(B_i,B_j)=l>0.\]
By Assumption \ref{asm:uniformlybounded}, the length of $\psi_T$ is uniformly bounded. So we have for any $T>0$,
  \[M\geqslant \int_{0}^{T}|\dot{\psi}_T|\d t\geqslant (K-1) l.\]
  This leads to the conclusion that $K$ is finite and independent of $T$.
\end{proof}

Lemma \ref{lem:finitjump} shows that for any $k$, the points $\alpha$ satisfying $U(\phi_k(\alpha))=E_k$ are finite. Note that when $U(\phi_k(\alpha))<E_k$, $\phi_k\in C^2$. By Euler-Lagrange equation \eqref{eq:EL0}, in an interval $[\beta_1,\beta_2]$ such that $\phi_k\in C^2$, the total variation of $\phi_k'$ is
\begin{equation}\label{eq:ddphi}
  \bigvee_{\beta_1}^{\beta_2}\phi_k^{(i)} \leqslant \int_{\beta_1}^{\beta_2}\abs{\phi_k''}\d\alpha=\int_{\beta_1}^{\beta_2}\frac{\abs{\phi_k'}\sqrt{\abs{\nabla U}^2\abs{\phi_k'}^2-\inp{\nabla U}{\phi_k'}^2}}{2E_k-2U(\phi_k)}\d\alpha.
\end{equation}
We can rewrite Eq. \eqref{eq:ddphi} with time parameterization as
\begin{equation}\label{eq:ddphipsi}
  \begin{aligned}
  \int_{\beta_1}^{\beta_2}\abs{\phi_k''}\d\alpha =& \int_{\beta_1}^{\beta_2}\frac{\abs{\phi_k'}\sqrt{\abs{\nabla U}^2\abs{\phi_k'}^2-\inp{\nabla U}{\phi_k'}^2}}{2E_k-2U(\phi_k)}\d\alpha\\
  & = \int_{\beta_1}^{\beta_2}\frac{\abs{\phi_k'}^2\sqrt{\abs{\nabla U}^2\abs{\hat{\tau}_k}^2-\inp{\nabla U}{\hat{\tau}_k}^2}}{2E_k-2U(\phi_k)}\d\alpha\\
  &=\int_{t_1}^{t_2}\frac{\abs{\phi_k'}^2\sqrt{\abs{\nabla U}^2|\dot{\psi}_k|^2-\langle\nabla U, \dot{\psi}_k\rangle^2}}{|\dot{\psi}_k|^3}\dd{\alpha}{t}\d t\\
  & = \abs{\phi_k'}\int_{t_1}^{t_2}\sqrt{\frac{\abs{\nabla U}^2|\dot{\psi}_k|^2-\langle \nabla U, \dot{\psi}_k\rangle ^2}{|\dot{\psi}_k|^4}}\d t,
  \end{aligned}
\end{equation}
where $\hat{\tau}_k=\phi_k'/|\phi_k'|$, $t_1 = \ell^{-1}(\beta_1)$ and $t_2 = \ell^{-1}(\beta_2)$.

For any $T>0$, we define
\begin{equation}\label{eq:Theta}
  \Theta_T(t) = \frac{\abs{\nabla U(\psi_T)}^2|\dot{\psi}_T|^2-\langle\nabla U(\psi_T), \dot{\psi}_T\rangle^2}{|\dot{\psi}_T|^4}
\end{equation}
 and denote $\Theta_k(t)=\Theta_{T_k}(t)$. It is obvious that $\Theta_T(t)$ is well-defined for $U(\psi_T(t))<E(T)$. Recall that the minimizer $\psi_T$ satisfies the Euler-Lagrange equation
\[\ddot{\psi} + \nabla U(\psi)=0,\]
or equivalently the first order system
\begin{equation}\label{eq:nlfoel}
  \begin{aligned}
  \dot{\Psi}_1 &= \Psi_2, \\
  \dot{\Psi}_2 &= -\nabla U(\Psi_1).
  \end{aligned}
\end{equation}
We have $\psi\equiv \Psi_1$. We will use $\tilde{\Psi}$ to denote the extended variable $(\Psi_1,\Psi_2)\in\mathbb{R}^{2d}$. The state $\tilde{\Psi}_c=(\vec{x}_c, 0)$ is a fixed point of the system, where $\vec{x}_c$ is a critical point. The following lemma shows that the function $\Theta_T(t)$ can be continuously extended to $t\in [0,T]$.

\begin{lemma}[Preliminary properties of $\Theta_T$ and $\Psi$]\label{lem:bounded}
  \begin{enumerate}
    \item Given $\vec{x}_s\neq \vec{x}_c$, $\vec{x}_f\neq \vec{x}_c$ and $T>0$,  $\lim_{s\to t}\Theta_T(s)$ exists for any $t\in [0,T]$.
    \item There exists a neighborhood $\mathcal{U}$ of critical point $\vec{x}_c$ and constant $m>0$, such that for any given $t_0>0$, $\vec{x}_s,\vec{x}_f\in\partial\mathcal{U}$, we have $\abs{\Psi_2}\geqslant m>0$ for $t\in [0,t_0]$  when $T\to +\infty$.
  \end{enumerate}
\end{lemma}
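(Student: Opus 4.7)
The plan is to treat the two statements separately, since they rest on quite different ideas.

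For (1), Lemma \ref{lem:finitjump} reduces the claim to showing that $\lim_{s\to t^\star}\Theta_T(s)$ exists at each of the finitely many turning points $t^\star\in[0,T]$ with $|\dot\psi_T(t^\star)|=0$; at every other point $\Theta_T$ is continuous by inspection. At such a $t^\star$, energy conservation \eqref{eq:conservation} forces $U(\psi_T(t^\star))=E(T)$, and my first step would be to rule out $\vec{a}:=\nabla U(\psi_T(t^\star))=0$. Indeed, if $\psi_T(t^\star)$ were a critical point of $U$, then $\ddot\psi+\nabla U(\psi)=0$ with datum $(\psi_T(t^\star),0)$ would have the constant trajectory by ODE uniqueness, contradicting $\vec{x}_s,\vec{x}_f\neq\vec{x}_c$. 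With $\vec{a}\neq 0$ in hand, I would Taylor-expand $\psi_T(t)$ (using the Euler-Lagrange equation iteratively to extract $\ddot\psi_T(t^\star)=-\vec{a}$, $\dddot\psi_T(t^\star)=-\nabla^2 U(\psi_T(t^\star))\dot\psi_T(t^\star)=0$, and the next coefficient) and $\nabla U(\psi_T(t))$ (via the chain rule) around $t^\star$ in $\tau:=t-t^\star$. A direct computation then shows that the leading $|\vec{a}|^4\tau^2$ contributions to $|\nabla U|^2|\dot\psi_T|^2$ and $(\nabla U\cdot\dot\psi_T)^2$ cancel exactly; moreover the next $\tau^4$ contributions also match, so the numerator of $\Theta_T(t)$ is $O(\tau^5)$ while $|\dot\psi_T|^4=|\vec{a}|^4\tau^4+O(\tau^5)$. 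Hence $\Theta_T(t)=O(\tau)$ and $\lim_{s\to t^\star}\Theta_T(s)=0$.

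For (2), I would take $\mathcal{U}=B(\vec{x}_c,r)$ with $r$ small; by Assumption \ref{asm:nondegenerate} the Hessian $\nabla^2 U(\vec{x}_c)$ is negative definite, and a Taylor estimate together with the compactness of $\partial\mathcal{U}$ gives $\delta:=U(\vec{x}_c)-\max_{\partial\mathcal{U}}U>0$. Theorem \ref{prop:conv} supplies $E(T)\to U(\vec{x}_c)$, so for all sufficiently large $T$ and every $\vec{x}_s\in\partial\mathcal{U}$ energy conservation provides the immediate bound $|\Psi_2(0)|^2=2(E(T)-U(\vec{x}_s))\geqslant\delta$. In the extended phase space, $(\vec{x}_c,0)$ is a hyperbolic fixed point of the system \eqref{eq:nlfoel}, with Jacobian eigenvalues $\pm\sqrt{\mu_i}$ where $\{\mu_i\}$ are the (positive) eigenvalues of $-\nabla^2 U(\vec{x}_c)$. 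I would then show that as $T\to\infty$ the finite-$T$ phase-space orbit $(\psi_T(t),\dot\psi_T(t))$ on the interval $[0,t_0]$ converges (uniformly in $\vec{x}_s\in\partial\mathcal{U}$ by compactness) to a limiting orbit on the unstable manifold of $(\vec{x}_c,0)$ that approaches this fixed point only as $t\to\infty$; on the compact interval $[0,t_0]$ this limit orbit therefore stays bounded away from $(\vec{x}_c,0)$. By linearization, $|\Psi_2(t)|$ is comparable to $|\Psi_1(t)-\vec{x}_c|$ on the unstable manifold, which yields a positive lower bound $m(t_0)$; a continuous-dependence argument then transfers the bound to large finite $T$ with constant $m/2$.

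The principal obstacle in (2) is the competition between shrinking $\mathcal{U}$ (which improves the accuracy of the linearization near $\vec{x}_c$) and maintaining $\delta$ (which supplies the a priori lower bound on $|\Psi_2(0)|$). A clean remedy would be to rescale $\vec{x}-\vec{x}_c$ by $r$ and work with the exactly solvable linearized Hamiltonian system $\ddot u=-\nabla^2 U(\vec{x}_c)u$, absorbing the $O(r)$ corrections uniformly. A subtler point is to make precise the sense in which the finite-$T$ phase-space orbits converge to the limiting unstable-manifold orbit on compact time intervals; this is likely to require combining Theorem \ref{prop:conv} with the stable-manifold theorem or a Hartman-Grobman style linearization, echoing point (i) of Section \ref{sec:con}.
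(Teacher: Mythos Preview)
Your argument for (1) is essentially the paper's: Taylor-expand $\psi_T$ and $\nabla U(\psi_T)$ around a turning point and observe the leading cancellations in the numerator of $\Theta_T$, forcing the limit to be zero. The paper carries this out by noting that all odd-order derivatives $\Psi^{(1)},\Psi^{(3)},\Psi^{(5)}$ vanish at $t_c$ (so the numerator is actually $O(\tau^6)$, not merely $O(\tau^5)$), but your weaker estimate already suffices. Your explicit exclusion of $\nabla U(\psi_T(t^\star))=0$ via ODE uniqueness is a point the paper leaves implicit.

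For (2) your route diverges from the paper's, and the paper's argument is considerably more elementary than what you propose. Rather than invoking the hyperbolic structure of $(\vec{x}_c,0)$, stable/unstable manifolds, or convergence of finite-$T$ orbits to a limiting invariant-manifold orbit, the paper proceeds by pure compactness and energy conservation. One observes that the initial data $(\vec{x}_s,\vec{y}_s(T))$ lie in a fixed compact set $\partial\mathcal{U}\times\{|\vec{y}|\leqslant C\}$ (the bound on $\vec{y}_s$ coming from \eqref{eq:conservation} and $E(T)\to E^\star$), so by continuous dependence the phase-space distance $|\Phi_1-\vec{x}_c|^2+|\Phi_2|^2$ has a uniform positive minimum on $[0,t_0]$. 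One then argues by contradiction: if $|\Psi_2(t_m(T_k))|\to 0$ along some sequence, energy conservation forces $U(\Psi_1(t_m(T_k)))\to U(\vec{x}_c)$, hence $\Psi_1(t_m(T_k))\to\vec{x}_c$ (since $\vec{x}_c$ is the unique maximizer of $U$ in $\mathcal{U}$), contradicting the positive minimum. This sidesteps both difficulties you flag (the $r$-versus-$\delta$ tension and the convergence of finite-$T$ orbits); no linearization or invariant-manifold theory is needed at this stage. Incidentally, the limiting orbit on $[0,t_0]$ would lie on the \emph{stable} manifold (it approaches the fixed point as $t\to+\infty$), not the unstable one.
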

\begin{proof}
  (1) Since $\Psi_1,\Psi_2\in C^1[0,T]$, $\Theta_T$ is continuous except the points with $\Psi_2(t_c)=0$. Below we will show these points are removable singularities.

Suppose $\Psi_2(t_c)=0$.  We have the following Taylor expansion near $t_c$
\begin{equation}\label{eq:taylorpsi}
\begin{aligned}
  -\nabla U(\Psi_1) &= \Psi^{(2)} + \frac{1}{2}\Psi^{(4)}(t-t_c)^2 + \frac{1}{24}\Psi^{(6)}(t-t_c)^4 + o(t-t_c)^5,\\
  \Psi_2 & = \Psi^{(2)}(t-t_c) + \frac{1}{6}\Psi^{(4)}(t-t_c)^3 + \frac{1}{120}\Psi^{(6)}(t-t_c)^5 + o(t-t_c)^6,
\end{aligned}
\end{equation}
where $\Psi^{(2)}$, $\Psi^{(4)}$, $\Psi^{(6)}$ are corresponding higher derivatives of $\Psi_1(t)$ assuming value at $t_c$. The odd order derivatives of $\Psi_1$ disappear because $\Psi^{(1)}=0$,
\[\Psi^{(3)} = \dd{\,}{t}\Psi_1^{(2)}=-\dd{\,}{t}\nabla U(\Psi_1)=-\nabla^2 U\cdot\dot{\Psi}_1=0\]
and
\[\Psi^{(5)} = \dd{\,}{t}\Psi_1^{(4)}=\dd{\,}{t}\nabla^2 U\nabla U=\nabla(\nabla^2 U\nabla U)\cdot\dot{\Psi}_1=0.\]
 Substituting \eqref{eq:taylorpsi} into $\Theta_T(t)$, we obtain that in a neighborhood of $t_c$,
\begin{equation}\label{eq:Thetat}
  \Theta_T(t) = \frac{\big(\abs{\Psi^{(2)}}^2\abs{\Psi^{(4)}}^2 - \inp{\Psi^{(2)}}{\Psi^{(4)}}^2\big)(t-t_c)^2 + o(t-t_c)^2}{9\abs{\Psi^{(4)}}^2+o(1)}.
\end{equation}

With the fact that $\Psi^{(2)} = -\nabla U$, $\Psi^{(4)}=\nabla^2 U\nabla U$ taking value at $\Psi_1(t_c)$, we have
\[\Theta_T(t) = \frac{[\widehat{\nabla U}(\nabla^2 U)^2\widehat{\nabla U} - (\widehat{\nabla U}\nabla^2 U\widehat{\nabla U})^2](t-t_c)^2 + o(t-t_c)^2}{9+o(1)},\]
where $\widehat{\nabla U}=\nabla U/\abs{\nabla U}$. Since $\Psi_1$ is uniformly bounded, $\lim_{t\to t_c}\Theta_T(t)=0$. This ends the proof of the first statement.

(2) By assumption \ref{asm:nondegenerate}, all eigenvalues of $\nabla^2U(\vec{x}_c)$ are negative, so there is a neighborhood $\mathcal{U}$ of $\vec{x}_c$, for any $\vec{x}\in\partial\mathcal{U}$, $\vec{x}\neq\vec{x}_c$, $U(\vec{x})<U(\vec{x}_c)$. For $\vec{x}, \vec{y}\in\mathbb{R}^d$, let $(\Phi_1(t,\vec{x},\vec{y}),\Phi_2(t,\vec{x},\vec{y}))$ be the solution of \eqref{eq:nlfoel} at time $t$ with initial condition $\Psi_1(0)=\vec{x}$, $\Psi_2(0)=\vec{y}$. So $\Psi_1(t)=\Phi_1(t,\vec{x}_s,\vec{y}_s(T))$, $\Psi_2(t)=\Phi_2(t,\vec{x}_s,\vec{y}_s(T))$ for some proper $\vec{y}_s$. Conservation of energy implies
\begin{equation}\label{eq:boundy}
  \abs{\vec{y}_s}^2 = 2E(T) - 2U(\vec{x}_s).
\end{equation}
From $E(T)\to E^\star$, $\vec{x}_s$ assumes value on a compact set $\partial\mathcal{U}$, we know $\abs{\vec{y}_s}^2\leqslant 2(E^\star+1) + 2\max_{\vec{x}_s\in\partial\mathcal{U}}U(\vec{x}_s)$ for large enough $T$.

Because $\Phi_1(t,\vec{x},\vec{y})$, $\Phi_2(t,\vec{x},\vec{y})$ are continuous functions of $(t,\vec{x},\vec{y})$, the triple $(t,\vec{x},\vec{y})$ assumes value in a compact set $[0,t_0]\times\partial\mathcal{U}\times\{\abs{\vec{y}}\leqslant 2(E^\star+1) + 2\max_{\vec{x}_s\in\partial\mathcal{U}}U(\vec{x}_s)\}$, the continuous function $\abs{\Phi_1-\vec{x}_c}^2 + \abs{\Phi_2}^2$ has a minimum $m$. Note that $(\vec{x}_c,0)$ is a fixed point of \eqref{eq:nlfoel}, the minimum $m$ must be positive, otherwise the uniqueness of initial value problem will be violated. So we have
\begin{equation}\label{eq:lowerbound}
  \abs{\Phi_1(t,\vec{x}_s,\vec{y}_s)-\vec{x}_c}^2 + \abs{\Phi_2(t,\vec{x}_s,\vec{y}_s)}^2\geqslant m>0.
\end{equation}
Let $t_m(T)=\arg\min_{t\in[0,t_0]}\abs{\Phi_2(t,\vec{x}_s,\vec{y}_s(T))}$, we have
\begin{equation}\label{eq:lowerPhi2}
  \liminf_{T\to+\infty}\abs{\Phi_2(t_m(T),\vec{x}_s,\vec{y}_s(T))}>0.
\end{equation}
Otherwise, there is a subsequence $T_k$ such that $\abs{\Phi_2(t_m(T_k),\vec{x}_s,\vec{y}_s(T_k))}\to 0$. By the energy conservation \eqref{eq:conservation}, we get
\begin{equation*}
\begin{aligned}
  \liminf_{k\to\infty}2E(T_k)& =2E^\star=2U(\vec{x_c})\\
  & =\liminf_{k\to\infty}\left\{\abs{\Phi_2(t_m,\vec{x}_s,\vec{y}_s(T_k))}^2+2U(\Phi_1(t_m(T_k),\vec{x}_s,\vec{y}_s(T_k)))\right\}\\
  & = 2\liminf_{k\to\infty}U(\Phi_1(t_m(T_k),\vec{x}_s,\vec{y}_s(T_k))).
\end{aligned}
\end{equation*}
So we can select a subsequence $U(\Phi_1(t_m(T_{k_l}),\vec{x}_s,\vec{y}_s(T_{k_l})))$ such that
\[\lim\limits_{l\to\infty}U(\Phi_1(t_m(T_{k_l}),\vec{x}_s,\vec{y}_s(T_{k_l}))) = U(\vec{x}_c).\]
Since the set $\{\Phi_1(t_m(T_{k_l}),\vec{x}_s,\vec{y}_s(T_{k_l}))\}$ is bounded, we can select a subsequence from $\Phi_1(t_m(T_{k_l}),\vec{x}_s,\vec{y}_s(T_{k_l}))$ which converges to a point $\vec{x}_m$. For simplicity, we still denote it by $\Phi_1(t_m(T_{k_l}),\vec{x}_s,\vec{y}_s(T_{k_l}))$. By continuity, $U(\vec{x}_c)=U(\vec{x}_m)$.  Because $\vec{x}_c$ is the unique maximum of $U(\vec{x})$ in $\mathcal{U}$, we must have $\vec{x}_m=\vec{x}_c$.

However, we know from \eqref{eq:lowerbound} that
\[0=\liminf_{l\to\infty}\abs{\Phi_1(t_m(T_{k_l}),\vec{x}_s,\vec{y}_s(T_{k_l}))-\vec{x}_c}^2 + \abs{\Phi_2(t_m(T_{k_l}),\vec{x}_s,\vec{y}_s(T_{k_l}))}^2\geqslant m>0.\]
This contradiction implies that \eqref{eq:lowerPhi2} is true.
So we have
\[\abs{\Phi_2(t,\vec{x}_s,\vec{y}_s)}\geqslant\abs{\Phi_2(t_m,\vec{x}_s,\vec{y}_s)}\geqslant m>0.\]
The proof is completed.
\end{proof}

 With the help of Lemmas \ref{lem:finitjump} and \ref{lem:bounded}, we can show that for any fixed $k$, $\phi_k'$ has bounded variation.
\begin{lemma}\label{lem:BV}
  For fixed $T_k>0$, $\phi'_k$ has bounded variation.
\end{lemma}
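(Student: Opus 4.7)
The plan is to combine Lemma \ref{lem:finitjump}, which bounds the number of points where $U(\phi_k)$ touches the energy level $E_k$, with the regularity of the Euler-Lagrange solution on the complement and the extended continuity of $\Theta_k$ provided by Lemma \ref{lem:bounded}(1). Concretely, by Lemma \ref{lem:finitjump} the set $S_k = \{\alpha \in [0,1] : U(\phi_k(\alpha)) = E_k\}$ is finite, say $0 \le \alpha_1^k < \alpha_2^k < \cdots < \alpha_{m_k}^k \le 1$ with $m_k \le K$. These points partition $[0,1]$ into finitely many open intervals $I_j$ on which $U(\phi_k) < E_k$ strictly, so by Proposition \ref{prop:finitT} (and the strong form \eqref{eq:EL0} of the Euler–Lagrange equation) $\phi_k \in C^2(\bar I_j)$.

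On each such $I_j = (\alpha_{j}^k, \alpha_{j+1}^k)$, the total variation of $\phi_k'$ equals $\int_{I_j} |\phi_k''|\,d\alpha$. I would apply the reparameterization identity \eqref{eq:ddphipsi}, which rewrites this quantity as
\begin{equation*}
\int_{I_j} |\phi_k''|\,d\alpha \;=\; |\phi_k'|\int_{t_j^k}^{t_{j+1}^k} \sqrt{\Theta_k(t)}\,dt,
\end{equation*}
where $t_j^k = \ell_k^{-1}(\alpha_j^k)$ and $\Theta_k$ is defined by \eqref{eq:Theta}. Summing over $j$ yields
\begin{equation*}
\sum_j \int_{I_j} |\phi_k''|\,d\alpha \;\le\; |\phi_k'|\int_0^{T_k} \sqrt{\Theta_k(t)}\,dt.
\end{equation*}

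Next I would invoke Lemma \ref{lem:bounded}(1): the function $\Theta_k$, initially defined only where $|\dot\psi_k| \ne 0$, extends continuously to the full interval $[0,T_k]$ (the singular times correspond to removable singularities, with limiting value $0$ by the Taylor computation \eqref{eq:Thetat}). Since $[0,T_k]$ is compact, $\Theta_k$ attains a finite maximum $M_k$, and hence $\int_0^{T_k}\sqrt{\Theta_k(t)}\,dt \le \sqrt{M_k}\,T_k < \infty$. Recalling that the equi-arclength parameterization forces $|\phi_k'|$ to equal a constant $L_k$, the sum of the smooth-piece variations is bounded by $L_k\sqrt{M_k}\,T_k$.

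Finally, I would account for the possible jumps of $\phi_k'$ across the finitely many points in $S_k$. Each such jump is at most $2|\phi_k'| = 2L_k$ in norm, contributing at most $2KL_k$ to the total variation. Combining the two contributions gives a finite bound
\begin{equation*}
\bigvee_0^1 \phi_k' \;\le\; L_k\sqrt{M_k}\,T_k + 2KL_k,
\end{equation*}
which proves that $\phi_k'$ has bounded variation for each fixed $k$. The only non-routine ingredient is the continuous extension of $\Theta_k$ across its apparent singularities, which is already established in Lemma \ref{lem:bounded}(1); everything else is a bookkeeping assembly of the pieces. I note that this argument does not yet give a bound uniform in $k$, which will be the harder task needed later for the compactness claim of Proposition \ref{prop:condev}, but the statement of Lemma \ref{lem:BV} itself only asserts finiteness for each fixed $T_k$.
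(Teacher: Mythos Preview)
Your proposal is correct and follows essentially the same route as the paper: use Lemma~\ref{lem:finitjump} to reduce to finitely many open subintervals where $\phi_k\in C^2$, convert $\int|\phi_k''|\,d\alpha$ to the time integral of $\sqrt{\Theta_k}$ via \eqref{eq:ddphipsi}, bound $\Theta_k$ on $[0,T_k]$ using the continuous extension from Lemma~\ref{lem:bounded}(1), and absorb the finitely many possible jumps of $\phi_k'$ by the trivial bound $2L_k$ each. The paper organizes the same pieces through the partition definition of total variation (its Cases~I and~II correspond exactly to your ``jumps'' and ``smooth pieces''), so the arguments are equivalent; one minor imprecision in your write-up is the claim $\phi_k\in C^2(\bar I_j)$, which need not hold at the endpoints where $U(\phi_k)=E_k$, but since $\int_{I_j}|\phi_k''|\,d\alpha$ is already shown to be finite via the bounded $\Theta_k$, this does not affect the conclusion.
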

\begin{proof}
  It is sufficient to show
  \begin{equation}\label{eq:BVTk}
    \sup\limits_{\Delta}\sum_{j=0}^{n}\abs{\phi'_k(\alpha_{j+1})-\phi'_k(\alpha_j)}
  \end{equation}
  is bounded, where $\Delta$ is a partition of $[0,1]$, i.e.,
  \[\Delta: 0=\alpha_0<\alpha_1<\alpha_2<\dots<\alpha_{n+1}=1.\]

  By Lemma \ref{lem:finitjump}, the set $\{\alpha\in [0,1]|U(\phi_k(\alpha))=E_k\}$ is finite. Denote the elements in this set by $\alpha^k_1<\alpha^k_2<\dots<\alpha^k_N$. The summation in \eqref{eq:BVTk} can be divided into two cases.

  \textbf{Case I:} $\alpha_j\in(\alpha_i^k,\alpha_{i+1}^k)$ for some $0\leqslant i\leqslant N-1$ and $\alpha_{j+1}\notin (\alpha_i^k,\alpha_{i+1}^k)$. Denote the index set for $j$ in this case by I.

By Lemma \ref{lem:finitjump}, \#$\{j\in \mathrm{I}\}\leqslant N$. We have
  \begin{equation}\label{eq:caseI}
    \sum_{j\in\mathrm{I}}\abs{\phi'_k(\alpha_{j+1})-\phi'_k(\alpha_j)}\leqslant 2NM.
  \end{equation}

  \textbf{Case II:} $\alpha_j, \alpha_{j+1}\in (\alpha_i^k,\alpha_{i+1}^k)$.   Denote the index set for $j$ in this case by II.

  For $j\in \mathrm{II}$, $\phi_k\in C^2[\alpha_j,\alpha_{j+1}]$. We have
\[
  \begin{aligned}
    \sum_{j\in\mathrm{II}}\abs{\phi'_k(\alpha_{j+1})-\phi'_k(\alpha_j)}&=\sum_{j\in\mathrm{II}}\abs{\int_{\alpha_j}^{\alpha_{j+1}}\phi''_k\d\alpha}\\
    &\leqslant\sum_{l=1}^{N}\int_{\alpha^k_l}^{\alpha^k_{l+1}}\abs{\phi''_k}\d\alpha\\
    &\leqslant\sum_{l=1}^{N}\int_{t^k_l}^{t^k_{l+1}}\sqrt{\Theta_k(t)}\d t\\
    & = \int_{0}^{T_k}\sqrt{\Theta_k(t)}\d t,
  \end{aligned}
\]
  where $t^k_l = \ell_k^{-1}(\alpha^k_l)$. $\ell_k$ is defined in \eqref{eq:repa}. By Lemma \ref{lem:bounded}, $\Theta(t)$ is continuous so it is bounded by $M_{\Theta}$ in $[0,T_k]$. This leads to
  \[\sum_{j\in\mathrm{II}}\abs{\phi'_k(\alpha_{j+1})-\phi'_k(\alpha_j)}\leqslant \sqrt{M_{\Theta}}T_k.\]
Combining with \eqref{eq:caseI}, $\phi'_k$ has bounded variation.
\end{proof}

By Proposition \ref{prop:conv}, we know that $\phi_k$, which has the same graph as $\psi_k$, tends to $\phi^\star$ passing through a critical point $\vec{x}_c$ when $T\rightarrow\infty$. We will show that $\phi_k'$ has uniformly bounded variation in a neighborhood of $\vec{x}_c$. This will be done through linearization analysis and Hartman-Grobman theorem.

In the neighborhood of $(\vec{x}_c,0)$, The nonlinear system \eqref{eq:nlfoel} can be well-approximated by its linearizaion
\begin{equation}\label{eq:lfosys}
  \begin{aligned}
  \dot{\vec{x}}_1 &= \vec{x}_2, \\
  \dot{\vec{x}}_2 &= A^2\vec{x}_1,
  \end{aligned}
\end{equation}
where $A^2=-\nabla^2 U(\vec{x}_c)$. Because all eigenvalues of $\nabla^2 U(\vec{x}_c)$ are negative, we may denote the eigenvalues of $A$ by $0<\mu_1\leqslant \mu_2\leqslant\dots\leqslant \mu_d$, the corresponding unit orthogonal eigenvectors by $\xi_1, \xi_2\dots,\xi_d$. We also use $\tilde{\vec{x}}$ to denote the extended variable $\tilde{\vec{x}}=(\vec{x}_1,\vec{x}_2)$. For the linearized system \eqref{eq:lfosys}, we have the following lemma.
\begin{lemma}\label{lem:unibounded}
  The solution of  \eqref{eq:lfosys} with boundary condition $\vec{x}_1(0)=\vec{x}_s$, $\vec{x}_1(T)=\vec{x}_f$ is
\begin{equation}\label{eq:solution}
  \begin{aligned}
    \vec{x}_1(t) & =  (\me^{AT}-\me^{-AT})^{-1}[\me^{At}(\vec{x}_f-\me^{-AT}\vec{x}_s) + \me^{A(T-t)}(\vec{x}_s-\me^{-AT}\vec{x}_f)], \\
    \vec{x}_2(t) & =  (\me^{AT}-\me^{-AT})^{-1}A[\me^{At}(\vec{x}_f-\me^{-AT}\vec{x}_s) - \me^{A(T-t)}(\vec{x}_s-\me^{-AT}\vec{x}_f)].
  \end{aligned}
\end{equation}
For any $\vec{x}_s\neq 0, \vec{x}_f\neq 0, \vec{x}_s\neq\vec{x}_f$, the integral
  \[\int_{0}^{T}\sqrt{\Theta_T^L(t)}\d t\]
  is uniformly bounded with respect to $T$, where
  \begin{equation}\label{eq:ThetaL}
    \Theta_T^L(t) = \frac{(\vec{x}_1^\T A^4\vec{x}_1)|\vec{x}_2|^2 - (\vec{x}_1^\T A^2\vec{x}_2)^2}{\abs{\vec{x}_2}^4}
  \end{equation}
\end{lemma}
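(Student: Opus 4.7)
Part 1 follows by direct computation. Since $-\nabla^2U(\vec{x}_c)=A^2$ is symmetric positive definite, its orthonormal eigenbasis $\{\xi_i\}$ diagonalizes the system, reducing \eqref{eq:lfosys} to the decoupled scalar two-point problems $\ddot y_i=\mu_i^2 y_i$ with $y_i(0)=\vec{x}_s\cdot\xi_i$ and $y_i(T)=\vec{x}_f\cdot\xi_i$; solving each for $y_i$ and recombining via $e^{\pm At}\xi_i=e^{\pm\mu_i t}\xi_i$ recovers \eqref{eq:solution}. For the uniform bound, the starting point is a geometric reformulation: for any smooth nonvanishing $\vec{v}(t)\in\mathbb{R}^d$, a short calculation gives $\bigl|\tfrac{d}{dt}(\vec{v}/|\vec{v}|)\bigr|^2=\bigl(|\vec{v}|^2|\dot{\vec{v}}|^2-(\vec{v}\cdot\dot{\vec{v}})^2\bigr)/|\vec{v}|^4$. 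Applied to $\vec{v}=\vec{x}_2$, together with the identity $\dot{\vec{x}}_2=A^2\vec{x}_1$, this expression is exactly $\Theta_T^L(t)$. Hence $\int_0^T\sqrt{\Theta_T^L}\,dt$ equals the arclength of the spherical curve $t\mapsto\hat{\vec{x}}_2(t):=\vec{x}_2(t)/|\vec{x}_2(t)|$ on the unit sphere in $\mathbb{R}^d$, and the goal is to bound this arclength uniformly in $T$.

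Next I decompose in the eigenbasis. Let $r_i(t):=\vec{x}_2(t)\cdot\xi_i=\dot y_i(t)=\mu_i(\alpha_i^+ e^{\mu_i t}-\alpha_i^- e^{-\mu_i t})$, so that $\dot r_i=\mu_i^2 y_i$ is the $i$-th component of $\dot{\vec{x}}_2$. The Lagrange identity gives
\[
|\vec{x}_2|^2|\dot{\vec{x}}_2|^2-(\vec{x}_2\cdot\dot{\vec{x}}_2)^2=\sum_{i<j}(r_i\dot r_j-r_j\dot r_i)^2,
\]
and combining this with $\sqrt{\sum c_k^2}\leq\sum|c_k|$ and $r_i^2+r_j^2\leq\sum_k r_k^2$ yields the pointwise bound
\[
\sqrt{\Theta_T^L}\;\leq\;\sum_{i<j}\frac{|r_i\dot r_j-r_j\dot r_i|}{r_i^2+r_j^2}\;=\;\sum_{i<j}|\dot\theta_{ij}|,
\]
where $\theta_{ij}(t)$ is a continuous argument of the planar vector $(r_i(t),r_j(t))$. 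It therefore suffices to bound $\mathrm{TV}_{[0,T]}\theta_{ij}$ by a constant independent of $T$ (isolated simultaneous zeros $r_i=r_j=0$ give only removable singularities, as in Lemma \ref{lem:bounded}(1)).

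The TV bound rests on two zero-count estimates. Set $f_{ij}:=r_i\dot r_j-r_j\dot r_i$, so that $\dot\theta_{ij}=f_{ij}/(r_i^2+r_j^2)$. A direct calculation using $\ddot r_k=\mu_k^2 r_k$ yields $\dot f_{ij}=(\mu_j^2-\mu_i^2)\,r_ir_j$. Since each $r_k$ is a two-term real exponential polynomial it has at most one real zero, so $r_ir_j$ changes sign at most twice, $f_{ij}$ is monotonic on at most three intervals, and hence has at most three real zeros; consequently $\theta_{ij}$ is monotonic on at most four subintervals of $[0,T]$. Second, for any angle $\theta_0$ the condition $\theta_{ij}\equiv\theta_0\pmod{\pi}$ is equivalent to the vanishing of $\sin\theta_0\,r_i(t)-\cos\theta_0\,r_j(t)$, a real exponential polynomial in at most four distinct exponents $\pm\mu_i,\pm\mu_j$; by the classical bound that such a polynomial has at most three real zeros, every direction modulo $\pi$ is crossed at most three times on $[0,T]$. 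If $|\Delta\theta_{ij}|>3\pi$ on a single monotonic subinterval, then on that subinterval alone $\theta_{ij}$ would cross some direction modulo $\pi$ at least four times, contradicting the second count. Therefore $\mathrm{TV}_{[0,T]}\theta_{ij}\leq 4\cdot 3\pi=12\pi$ uniformly in $T$, and summing over the $\binom{d}{2}$ pairs yields the desired uniform bound on $\int_0^T\sqrt{\Theta_T^L}\,dt$. The degenerate case $\mu_i=\mu_j$ is strictly easier: $\dot f_{ij}\equiv 0$ leaves a single monotonic piece, and the defining polynomial above reduces to two exponential terms, giving $|\Delta\theta_{ij}|\leq\pi$.

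The main obstacle is precisely the uniformity in $T$: a priori a monotonic piece of $\theta_{ij}$ could wind arbitrarily many times over a long interval, so routine monotonicity arguments fail. What rescues the proof is the algebraic rigidity of sums of real exponentials (Descartes' rule for exponentials / Polya's zero count), which caps the number of times the trajectory $(r_i,r_j)$ crosses any line through the origin by three, uniformly in $T$.
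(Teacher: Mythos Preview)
Your argument is correct and takes a genuinely different route from the paper. The paper's proof writes out $F(t)=\sum_{i<j}f_{ij}^2$ and $|\vec{x}_2|^2$ explicitly in the eigenbasis, introduces the index sets $S=\{i:x_s^i\neq0\}$ and $\bar S$, and then carries out a four-case analysis (according to whether $i,j\in S$ or $\bar S$) in which each $\int_0^{T/2}|f_{ij}|/|\vec{x}_2|^2\,dt$ is bounded by explicit exponential estimates, with further Taylor sub-cases near isolated zeros of the denominator. Your approach replaces all of this with two structural observations: first, that $\sqrt{\Theta_T^L}=\bigl|\tfrac{d}{dt}(\vec{x}_2/|\vec{x}_2|)\bigr|$, so the integral is a spherical arclength; second, that after the Lagrange-identity decomposition each pairwise term is $|\dot\theta_{ij}|$, whose total variation is controlled by the zero counts of the real exponential polynomials $r_k$, $f_{ij}$, and $\sin\theta_0\,r_i-\cos\theta_0\,r_j$ via the Descartes--P\'olya bound. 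The payoff is substantial: your bound $\int_0^T\sqrt{\Theta_T^L}\,dt\le 12\pi\binom{d}{2}$ is case-free, independent of the eigenvalues $\mu_i$ and of the boundary data, and the proof is an order of magnitude shorter. The paper's computation, on the other hand, makes the dependence on the spectral gaps $\mu_j-\mu_i$ visible and produces the precise exponential profiles that are reused (in the same style) in the subsequent nonlinear Lemma~\ref{lem:nlunibounded}; your abstract bound would not directly feed into that perturbative step, so if you adopt this proof you should check that Lemma~\ref{lem:nlunibounded} can be reproved without leaning on those explicit rates. One small point worth stating explicitly in your write-up: at an isolated simultaneous zero $r_i(t_0)=r_j(t_0)=0$ (which can occur for at most one $t_0$ per pair, since each $r_k$ has at most one zero), the argument $\theta_{ij}$ jumps by $\pi$, but $f_{ij}$ vanishes to third order while $r_i^2+r_j^2$ vanishes to second order, so $|\dot\theta_{ij}|$ stays locally bounded and the total-variation bound acquires at most an extra $\pi$ per pair.
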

\begin{proof}
It is straightforward to check that \eqref{eq:solution} is the solution of the boundary value problem. The function $\Theta_T^L(t)$ is a special case of $\Theta_T(t)$ defined in \eqref{eq:Theta} by taking $U(\vec{x}) = -\vec{x}^\T A^2\vec{x}$. By Lemma \ref{lem:bounded}, we only need to consider the case that $T$ is sufficiently large. For simplicity, we denote $\Theta_T^L(t)=F(t)/|\vec{x}_2|^4$, where
\begin{equation}\label{eq:Ft}
  F(t) = (\vec{x}_1^\T A^4\vec{x}_1)|\vec{x}_2|^2-(\vec{x}_1^TA^2\vec{x}_2)^2.
\end{equation}
Denote $x_s^i=\vec{x}_s^\T\xi_i, x_f^i=\vec{x}_f^\T\xi_i, i=1,2,\dots d$. We have explicit form of $F(t)$ and $|\vec{x}_2|^2$

\[
\begin{aligned}
F(t)  = & \sum_{i<j}^{\,} \Big\{ \mu_i^2\mu_j(\me^{\mu_iT}-\me^{-\mu_iT})^{-1}(\me^{\mu_jT}-\me^{-\mu_jT})^{-1}[p_i\me^{\mu_i t}+q_i\me^{\mu_i (T-t)}]\cdot\\
&\quad [p_j\me^{\mu_j t}-q_j\me^{\mu_j (T-t)}] - \mu_i\mu_j^2(\me^{\mu_iT}-\me^{-\mu_iT})^{-1}(\me^{\mu_jT}-\me^{-\mu_jT})^{-1}\cdot\\
& \quad [p_i\me^{\mu_i t}-q_i\me^{\mu_i (T-t)}][p_j\me^{\mu_j t}+q_j\me^{\mu_j (T-t)}]\Big\}^2\\
=:  &\sum_{i< j} f_{ij}^2(t),\\
|\vec{x}_2|^2 = &\sum_{i} \mu_i^2(\me^{\mu_iT}-\me^{-\mu_iT})^{-2}[p_i\me^{\mu_i t}-q_i\me^{\mu_i (T-t)}]^2,
\end{aligned}
\]
where $p_i=x_f^i-\me^{-\mu_iT}x_s^i$, $q_i = x_s^i-\me^{-\mu_iT}x_f^i$. Clearly, if $x_s^i, x_f^i\neq 0$, when $T$ gets large enough, $p_i, q_i\neq 0$ and they are uniformly bounded.

We first show that in $[0,T/2]$, $\int_{0}^{T/2}\sqrt{\Theta_T^L(t)}\d t$ is uniformly bounded. We have
\[ \sqrt{\Theta_T^L(t)} = \frac{\sqrt{\sum_{i<j}f_{ij}^2}}{|\vec{x}_2|^2}\leqslant C\frac{\max_{i<j}\abs{f_{ij}}}{|\vec{x}_2|^2}\leqslant C\sum_{i<j}\frac{\abs{f_{ij}}}{|\vec{x}_2|^2}.\]
It is sufficient to show for each pair $i<j$,
\[\int_{0}^{T/2}\frac{\abs{f_{ij}}}{|\vec{x}_2|^2}\d t\]
is uniformly bounded.

The key idea of the proof is to verify  that $f_{ij}$ and $|\vec{x}_2|^2$ can be dominated by an exponential function. For given $\vec{x}_s$, let  us denote the sets
\[S=\{i|x_s^i\neq 0\}, \quad\quad \bar{S}=\{1,2,\dots, d\}\backslash S.\]
 Since $\vec{x}_s\neq 0$, $S$ is not empty. For different choices of $\vec{x}_s$ and $\vec{x}_f$, we divide the proof into 4 cases.

{\bf Case 1: $i, j\in \bar{S}, i\neq j$.}

If $\bar{S}$ is not empty, for every $i\in \bar{S}$, we can always assume $x_f^i\neq 0$. Otherwise $f_{ij}=0$ for any $j$. If $\mu_i=\mu_j$, $f_{ij}=0$. For $\mu_i<\mu_j$
\[
  \abs{f_{ij}}\leqslant C\me^{(\mu_i+\mu_j)(t-T)},
\]
where $C$ is a generic positive constant independent of $T$. At the same time,
\[
  |\vec{x}_2|^2\geqslant C \me^{2\mu_i(t-T)}.
\]
Thus
\begin{equation}\label{eq:intininSjninS}
  \begin{aligned}
    \int_{0}^{T/2}\frac{\abs{f_{ij}}}{|\vec{x}_2|^2}\d t& \leqslant C\int_{0}^{T/2}\me^{(\mu_j-\mu_i)(t-T)}\d t \\
         & =\frac{C}{\mu_j-\mu_i}(\me^{-\frac{1}{2}(\mu_j-\mu_i)T}-\me^{-(\mu_j-\mu_i)T})\leqslant \frac{2C}{\mu_j-\mu_i},
  \end{aligned}
\end{equation}
which is uniformly bounded.

{\bf Case 2.1: $i\neq j\in S$, $\mu_i<\mu_j$.}

In this case, $f_{ij}$ can be bounded by an exponential function
\begin{equation}\label{eq:fijiinSjinS}
  \abs{f_{ij}}\leqslant C\me^{-(\mu_i+\mu_j)t}.
\end{equation}
The denominator $|\vec{x}_2|^2$ can be estimated by
\begin{equation}\label{eq:x2iinSjinS}
  |\vec{x}_2|^2\geqslant \mu_i^2[p_i\me^{\mu_i(2t-T)}-q_i]^2\me^{-2\mu_i t} + \mu_j^2[p_j\me^{\mu_j(2t-T)}-q_j]^2\me^{-2\mu_j t}.
\end{equation}

If for every $t\in [0,T/2]$, $(p_i\me^{\mu_i(2t-T)}-q_i)^2>0$, there is a positive lower bounded such that
\[|\vec{x}_2|^2\geqslant C\me^{-2\mu_it}.\]
We have
\begin{equation}\label{eq:intiinSjinS1}
  \begin{aligned}
    \int_{0}^{T/2}\frac{\abs{f_{ij}}}{|\vec{x}_2|^2}\d t& \leqslant C\int_{0}^{T/2}\me^{-(\mu_j-\mu_i)t}\d t \\
         & =\frac{C}{\mu_j-\mu_i}(1-\me^{-\frac{1}{2}(\mu_j-\mu_i)T})\leqslant \frac{C}{\mu_j-\mu_i}.
  \end{aligned}
\end{equation}

If there is a $t_i\in [0,T/2]$ such that $p_i\me^{\mu_it_i}-q_i\me^{\mu_i(T-t_i)}=0$, then we define $\rho_i = (q_i/p_i)^{\frac{1}{\mu_i}}$, $\rho_j = (q_j/p_j)^{\frac{1}{\mu_j}}$ if $p_jq_j>0$. The constants $\rho_i, \rho_j$ are uniformly bounded with respect to $T$. We have $\me^{2t_i}=\rho_i\me^T$. We will show that in a neighborhood of $t_i$, the integral is uniformly bounded.

If $\rho_i=\rho_j=\rho$,  we can obtain the estimation
\begin{equation}\label{eq:fijiinSjinS2}
  \abs{f_{ij}}\leqslant 4\mu_i\mu_j\abs{p_i}\abs{p_j}\rho^{\frac{1}{2}(\mu_i+\mu_j)}(\mu_j-\mu_i)(t-t_i)^2\me^{-\frac{1}{2}(\mu_j+\mu_i)T} + o(t-t_i)^2\me^{-\frac{1}{2}(\mu_j+\mu_i)T}.
\end{equation}
The denominator
\begin{equation}\label{eq:x2iinSjinS2}
  |\vec{x}_2|^2\geqslant C(t-t_i)^2\me^{-2\mu_iT} + o(t-t_i)^2\me^{-2\mu_iT}.
\end{equation}
So there is a $\delta>0$ independent of $T$, in $[t_i-\delta,t_i+\delta]$,
\[\frac{\abs{f_{ij}}}{|\vec{x}_2|^2}\leqslant \frac{(C+o(1))\me^{-\frac{1}{2}(\mu_j-\mu_i)T}}{C+o(1)}\leqslant 1.\]
The integration $\int_{t_i-\delta}^{t_i+\delta}\abs{f_{ij}}/|\vec{x}_2|^2\d t$ is uniformly bounded.

If $\rho_i\neq \rho_j$, there is a neighborhood of $t_i$ such that $(p_j\me^{\mu_j(2t-T)}-q_j)^2>0$. With Taylor expansion near $t_i$, we obtain
\begin{equation}\label{eq:fijiinSjinS3}
  \abs{f_{ij}}\leqslant C\me^{-\frac{1}{2}(\mu_i+\mu_j)T}.
\end{equation}
\begin{equation}\label{eq:x2iinSjinS3}
  |\vec{x}_2|^2\geqslant (C^2+o(1))(t-t_i)^2\me^{-\mu_iT} + \me^{-\mu_jT}.
\end{equation}
So there is a $\delta>0$ such that in $[t_i-\delta,t_i+\delta]$,
\[|\vec{x}_2|^2\geqslant C(t-t_i)^2\me^{-\mu_iT} + C\me^{-\mu_jT}.\]
In all, the integration
\begin{equation}\label{eq:intiinSjinSlocal}
\begin{aligned}
  \int_{t_i-\delta}^{t_i+\delta}\frac{\abs{f_{ij}}}{|\vec{x}_2|^2}\d t\leqslant & C\int_{t_i-\delta}^{t_i+\delta}\frac{\me^{-\frac{1}{2}(\mu_j-\mu_i)T}}{C^2(t-t_i)^2+\me^{-(\mu_j-\mu_i)T}}\d t\\
 = &2\int_{0}^{\delta}\frac{C\d \me^{\frac{1}{2}(\mu_j-\mu_i)T}s}{1+(C\me^{\frac{1}{2}(\mu_j-\mu_i)T}s)^2}\d t\\
 = &C\int_{0}^{C\delta \me^{\frac{1}{2}(\mu_j-\mu_i)T}}\frac{\d s}{1+s^2}\leqslant C\frac{\pi}{2}
\end{aligned}
\end{equation}
is uniformly bounded.

To summarize, we have shown that there exists a $\delta>0$, the integration in the neighborhood $[t_i-\delta,t_i+\delta]$ is uniformly bounded. Outside this interval, we have
\[|\vec{x}_2|^2\geqslant C\me^{-2\mu_it}.\]
Thus
\begin{equation}\label{eq:intiinSjinS3}
\begin{aligned}
  \int_{0}^{T/2}\frac{\abs{f_{ij}}}{|\vec{x}_2|^2}\d t &=\int_{0}^{t_i-\delta} + \int_{t_i-\delta}^{t_i+\delta} +\int_{t_i+\delta}^{T/2}\frac{\abs{f_{ij}}}{|\vec{x}_2|^2}\d t \\
  &  \leqslant C + \int_{0}^{T/2}\me^{-(\mu_j-\mu_i)t}\d t\leqslant C + \frac{1}{\mu_j-\mu_i}.
  \end{aligned}
\end{equation}

{\bf Case 2.2: $i\neq j\in S$, $\mu_i=\mu_j=\mu$.}

The proof in this case is similar as  Case 2.1. The main difference is that the estimation of $\abs{f_{ij}}$ is replaced by
\begin{equation}\label{eq:fijiinSjinS4}
  \abs{f_{ij}}\leqslant C\me^{-\mu T}.
\end{equation}
If there exists $t_i$ such that $p_i\me^{\mu t_i}-q_i\me^{\mu(T-t_i)}=0$, the estimation \eqref{eq:fijiinSjinS2}, \eqref{eq:x2iinSjinS2}, \eqref{eq:fijiinSjinS3} and \eqref{eq:x2iinSjinS3} imply that $\abs{f_{ij}}/|\vec{x}_2|^2$ is uniformly bounded in $[t_i-\delta,t_i+\delta]$.  Outside this interval,
\[|\vec{x}_2|^2\geqslant C\me^{-2\mu t}.\]
Thus
\begin{equation}\label{eq:intiinSjinS4}
\begin{aligned}
  \int_{0}^{T/2}\frac{\abs{f_{ij}}}{|\vec{x}_2|^2}\d t & =\int_{0}^{t_i-\delta} + \int_{t_i-\delta}^{t_i+\delta} +\int_{t_i+\delta}^{T/2} \frac{\abs{f_{ij}}}{|\vec{x}_2|^2}\d t\\
  & \leqslant C + \me^{-\mu T}\int_{0}^{T/2}\me^{2\mu t}\d t\leqslant C + \frac{1}{\mu}.
\end{aligned}
\end{equation}

{\bf Case 3: $i\in S$, $j\in\bar{S}$.}

In this case, we can estimate $f_{ij}$ and $|\vec{x}_2|^2$ by
\begin{equation*}
  \begin{aligned}
   \abs{f_{ij}}  \leqslant \Big|\mu_ix_f^i & [p_i\me^{\mu_i (2t-T)}+q_i][\me^{\mu_j t}-\me^{-\mu_j t}] - \mu_j x_f^j[p_i\me^{\mu_i (2t-T)}-q_i][\me^{\mu_j t}+\me^{-\mu_j t}]\Big|\cdot\\
   & \me^{-\mu_it}\me^{-\mu_jT} \leqslant C\me^{-\mu_i t}\me^{\mu_j(t-T)}.
  \end{aligned}
\end{equation*}

\begin{equation}\label{eq:x2iinSjninS}
  |\vec{x}_2|^2\geqslant \mu_i^2[p_i\me^{\mu_i(2t-T)}-q_i]^2\me^{-2\mu_i t} + \mu_j^2\abs{x_f^j}^2(\me^{\mu_j t}+\me^{-\mu_j t})^2\me^{-2\mu_jT}.
\end{equation}

If there exists $t_i\in [0,T/2]$ such that $p_i\me^{\mu_i(2t-T)}-q_i=0$, we can take similar argument as in Case 2.1. So in a $\delta$-neighborhood of $t_i$, the integration is uniformly bounded. Outside this neighborhood, we have
\[|\vec{x}_2|^2\geqslant C(\me^{-2\mu_i t} + \me^{2\mu_j (t-T)}).\]
Since $\mu_j\geqslant\mu_i$, $t\leqslant T/2$, we have $\me^{-2\mu_i t}\geqslant \me^{2\mu_j (t-T)}$. Thus
\begin{equation}\label{eq:intiinSjninS}
\begin{aligned}
  \int_{0}^{T/2}\frac{\abs{f_{ij}}}{|\vec{x}_2|^2}\d t& = \int_{0}^{t_i-\delta} + \int_{t_i-\delta}^{t_i+\delta} + \int_{t_i+\delta}^{T/2}\frac{\abs{f_{ij}}}{|\vec{x}_2|^2}\d t \\
  &\leqslant C+C\int_{0}^{T/2}\me^{(\mu_i+\mu_j)t}\me^{-\mu_j T}\d t\\
& \leqslant C + \frac{C}{\mu_i+\mu_j}(\me^{-(\mu_j-\mu_i)T}-\me^{-\mu_jT})\\
& \leqslant C+\frac{2C}{\mu_i+\mu_j}.
\end{aligned}
\end{equation}

{\bf Case 4: $i\in\bar{S}$, $j\in S$.}

As in Case 3, we have
\begin{align}
  \abs{f_{ij}} &\leqslant C\me^{-\mu_j t}\me^{\mu_i(t-T)}, \label{eq:fijininSjinS}\\
  |\vec{x}_2|^2 & \geqslant \mu_j^2[p_j\me^{\mu_j(2t-T)}-q_j]^2\me^{-2\mu_j t} + \mu_i^2\abs{x_f^i}^2(\me^{\mu_i t}+\me^{-\mu_i t})^2\me^{-2\mu_iT}. \label{eq:x2ininSjinS}
\end{align}

If there exists a $t_j\in [0,T/2]$ such that $p_j\me^{\mu_j(2t_j-T)}-q_j=0$, similar argument as the Case 2.1 holds.  We have boundedness of the integrand in a $\delta$-neighborhood of $t_j$, and
\[|\vec{x}_2|^2\geqslant C(\me^{-2\mu_j t} + \me^{2\mu_i (t-T)})\]
outside the $\delta$-neighborhood.

Denote $t_c=\frac{\mu_iT}{\mu_i+\mu_j}$. When $t\leqslant t_c$, $\me^{-2\mu_j t}\geqslant \me^{2\mu_i (t-T)}$. When $t\geqslant t_c$, $\me^{-2\mu_j t}\leqslant \me^{2\mu_i (t-T)}$. So we have the estimate
\[
\begin{aligned}
  \int_{0}^{T/2}\frac{\abs{f_{ij}}}{|\vec{x}_2|^2}\d t & \leqslant \int_{t_j-\delta}^{t_j+\delta}\frac{\abs{f_{ij}}}{|\vec{x}_2|^2}\d t  + C\int_{0}^{t_c}\me^{(\mu_i+\mu_j)t}\me^{-\mu_iT}\d t + C\int_{t_c}^{T/2}\me^{-\mu_jt}\me^{-\mu_i(t-T)}\d t\\
& \leqslant C+ \frac{C}{\mu_i+\mu_j}\me^{-\mu_iT}(\me^{\mu_iT}-1) + \frac{C}{\mu_i+\mu_j}\me^{\mu_iT}(\me^{-\mu_iT} - \me^{-\frac{1}{2}(\mu_i+\mu_j)T})\\
& \leqslant C+ \frac{C}{\mu_i+\mu_j}.
\end{aligned}
\]

So far, we have shown that $\int_{0}^{T/2}\sqrt{\Theta_T^L(t)}\d t$ is uniformly bounded. As for the interval $[T/2,T]$, we can define $s=T-t$, repeat previous discussions to show that $\int_{T/2}^{T}\sqrt{\Theta_T^L(t)}\d t$ is also uniformly bounded. The proof is done.
\end{proof}

In order to show the uniformly bounded variation of $\phi_k'$ in the neighborhood of a critical point $\vec{x}_c$,  we employ a strengthened version of Hartman-Grobman theorem to control the nonlinearity\cite{Zhang2017Differentiability}. Let $T^t(\tilde{\vec{x}}_0)$, $L^t(\tilde{\vec{x}}_0)$ be the solution of the nonlinear  system \eqref{eq:nlfoel}
or the linear system \eqref{eq:lfosys}, respectively, at time $t$ with the initial condition $\tilde{\Psi}(0)=\tilde{\vec{x}}_0$ or  $\tilde{\vec{x}}(0)=\tilde{\vec{x}}_0$.
\begin{lemma}\label{lem:ZZL}
  There exists a neighborhood $\mathcal{U}$ of $\tilde{\Psi}_c$ and a homeomorphism $H:\mathcal{U}\to \mathbb{R}^{2d}$ such that $HT^t=L^{t}H$ for any $t>0$ and
  \begin{equation}\label{eq:1+beta}
    H(\tilde{\Psi}) = \tilde{\Psi} - \tilde{\Psi}_c + O(|\tilde{\Psi}-\tilde{\Psi}_c|^{1+\beta}),\quad H^{-1}(\tilde{\vec{x}}) = \tilde{\vec{x}} + \tilde{\Psi}_c +O(\abs{\tilde{\vec{x}}}^{1+\beta})
  \end{equation}
  for some constant $\beta\in (0,1)$.
\end{lemma}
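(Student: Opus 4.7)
The plan is to verify that $\tilde{\Psi}_c=(\vec{x}_c,0)$ is a hyperbolic fixed point of the first-order system \eqref{eq:nlfoel} whose linearization has a spectrum amenable to $C^{1,\beta}$ conjugacy, and then to invoke the linearization theorem of \cite{Zhang2017Differentiability} as a black box. The lemma itself is stated as a citation, so the substantive work on our side is to check the hypotheses of that theorem in the specific setting of \eqref{eq:nlfoel}.

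First, I would linearize \eqref{eq:nlfoel} at $\tilde{\Psi}_c$. Since $\nabla U(\vec{x}_c)=0$ at any critical point of the OM functional, the Jacobian has the block form
\begin{equation*}
J=\begin{pmatrix}0 & I\\ -\nabla^2 U(\vec{x}_c) & 0\end{pmatrix}=\begin{pmatrix}0 & I\\ A^2 & 0\end{pmatrix}.
\end{equation*}
Because $\vec{x}_c$ is a maximum of $U$ by Definition \ref{def:cp} and $\nabla^2 U(\vec{x}_c)$ has no zero eigenvalue by Assumption \ref{asm:nondegenerate}, the matrix $A^2=-\nabla^2 U(\vec{x}_c)$ is symmetric positive definite, so $A$ is well-defined with positive eigenvalues $\mu_1,\dots,\mu_d$. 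A short computation then shows that the $2d$ eigenvalues of $J$ are exactly $\pm\mu_1,\dots,\pm\mu_d$, all real and nonzero. Consequently $\tilde{\Psi}_c$ is a hyperbolic fixed point with no purely imaginary spectrum, and the stable and unstable subspaces (spanned respectively by the eigenvectors for $-\mu_i$ and $+\mu_i$) are separated from the imaginary axis by a uniform gap.

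Next, I would verify the smoothness hypothesis required by the theorem: since $V\in C^7(\mathbb{R}^d)$ we have $U\in C^5$, so the vector field in \eqref{eq:nlfoel} is of class $C^4$, which is amply sufficient for the $C^{1,\beta}$-linearization result of \cite{Zhang2017Differentiability}. The cited theorem then yields a local homeomorphism $H$ on a neighborhood $\mathcal{U}$ of $\tilde{\Psi}_c$ conjugating the nonlinear flow $T^t$ to its linear part $L^t$, with both $H$ and $H^{-1}$ of class $C^{1,\beta}$ for some $\beta\in(0,1)$ depending on the spectral gap of $J$. After the canonical normalization $DH(\tilde{\Psi}_c)=I$ (which one can arrange by composing with the identity on the tangent space), the Taylor expansions in \eqref{eq:1+beta} follow immediately from the definition of $C^{1,\beta}$ regularity applied at $\tilde{\Psi}_c$.

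The principal technical difficulty — producing a linearizing conjugacy with genuine $C^{1,\beta}$ regularity rather than merely the $C^0$ regularity supplied by classical Hartman-Grobman — lies entirely inside the reference, and hinges on a delicate control of near-resonances among the ratios $\mu_i/\mu_j$; this is handled in \cite{Zhang2017Differentiability} by selecting $\beta$ sufficiently small relative to the spectral data at $\vec{x}_c$. For our subsequent uses (in particular, transferring the uniform $L^1$ bound on $\sqrt{\Theta_T^L}$ from Lemma \ref{lem:unibounded} to the nonlinear setting), any positive $\beta$ suffices, so no quantitative information beyond the existence statement in \eqref{eq:1+beta} is needed.
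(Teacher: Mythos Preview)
Your hypothesis verification is sound and in fact more explicit than the paper's: you correctly compute the Jacobian $J$, identify its eigenvalues $\pm\mu_i$, and confirm hyperbolicity and sufficient smoothness of the vector field. The paper simply invokes Theorem~7.1 of \cite{Zhang2017Differentiability} without spelling these out.

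There is, however, one substantive step you have absorbed into the black box that the paper carries out by hand. You assume the cited theorem directly produces a conjugacy of \emph{flows}, i.e.\ $HT^t=L^tH$ for all $t>0$. In the paper's reading, Theorem~7.1 of \cite{Zhang2017Differentiability} is a result about \emph{diffeomorphisms}: it furnishes a homeomorphism $H_0$ conjugating only the time-one maps, $H_0T^1=L^1H_0$, together with the pointwise expansions $H_0(\tilde{\vec{x}})=\tilde{\vec{x}}+O(|\tilde{\vec{x}}|^{1+\beta})$ and likewise for $H_0^{-1}$. The paper then upgrades this to a flow conjugacy via the standard averaging
\[
H=\int_0^1 L^{-s}H_0T^s\,\d s,
\]
verifies $HT^t=L^tH$ for all $t$ by a change of variables plus the relation $H_0=L^{-1}H_0T^1$, and checks that the $O(|\tilde{\vec{x}}|^{1+\beta})$ estimate survives the integration by Taylor-expanding $T^s$ at $\tilde{\Psi}_c$ and using $\nabla T^s(\tilde{\Psi}_c)=\me^{Js}$. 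The inverse is handled symmetrically with $H^{-1}=\int_0^1 T^{-s}H_0^{-1}L^s\,\d s$.

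Whether this counts as a gap depends on exactly what the cited theorem delivers; if it is indeed stated for maps, then your proposal is missing this map-to-flow passage. The construction is classical, but it is not entirely free: one must check that the averaged $H$ retains the $1+\beta$ tangency to the identity, which the paper does via the explicit computation of $\nabla T^s(\tilde{\Psi}_c)$. Your normalization argument (``compose so that $DH(\tilde{\Psi}_c)=I$ and read off the expansion from $C^{1,\beta}$'') is morally equivalent but presupposes that the flow-level $H$ is already known to be $C^{1,\beta}$, which is precisely the point at issue.
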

\begin{proof}
  This is a corollary of Theorem 7.1 in  \cite{Zhang2017Differentiability}. Without loss of generality, we assume $\tilde{\Psi}_c=0$. Otherwise we can define a shift $S:\mathcal{U}\to\mathbb{R}^{2d}$, $S(\tilde{\Psi})=\tilde{\Psi}-\tilde{\Psi}_c$. Since $\tilde{\Psi}_c$ is invariant under $T^t$, we have $ST^tS^{-1}(0)=0$. We can consider the homeomorphism $\tilde{H}=HS$.

We first consider the 1-time solution $T^1$. By Theorem 7.1 in \cite{Zhang2017Differentiability}, there is a homeomorphism $H_0$ which satisfies $H_0(0)=0$, $H_0T^1=L^1H_0$ and
\[H_0(\tilde{\vec{x}}) = \tilde{\vec{x}} + O(\abs{\tilde{\vec{x}}}^{1+\beta}),\quad H_0^{-1}(\tilde{\vec{x}}) = \tilde{\vec{x}} +O(\abs{\tilde{\vec{x}}}^{1+\beta}).\]
  Then we define
  \begin{equation}\label{eq:H}
  H = \int_{0}^{1}L^{-s}H_0T^s\d s.
  \end{equation}
  We first verify $HT^t=L^tH$ for any $t>0$. Indeed,
\begin{equation*}
  L^tH = \int_{0}^{1}L^{t-s}H_0T^{s-t}\d s T^{t} = \left(\int_{-t}^{0}+\int_{0}^{1-t}\right)L^{-s}H_0T^s\d s T^{t}.
\end{equation*}
Since $H_0=L^{-1}H_0T$, the first term is
\begin{equation*}
  \int_{-t}^{0}L^{-s}H_0T^s\d t = \int_{-t}^{0}L^{-s-1}H_0T^{s+1}\d s=\int_{1-t}^{1}L^{-s}H_0T^s\d s.
\end{equation*}
So we obtain
\[
  L^tH=\int_{0}^{1}L^{-s}H_0T^s\d s T^{t}=HT^{t}.
\]

 To show that $H$ also satisfies condition \eqref{eq:1+beta}, we utilize Taylor expansion in a neighborhood of $\tilde{\Psi}_c$
  \[T^s\tilde{\vec{{x}}} = \nabla T^s(\tilde{\Psi}_c) \tilde{\vec{x}} + O(\abs{\tilde{\vec{x}}}^2).\]
  Note that $T^t(\tilde{\vec{x}})$ is the solution of \eqref{eq:nlfoel}, we have
  \[\dd{\,}{t}\nabla T^t(\tilde{\vec{x}}) = \nabla\dd{T^t}{t} = \nabla \begin{bmatrix}
                                                             \Psi_2 \\
                                                             -\nabla U(\Psi_1)
                                                           \end{bmatrix}
                                                           =\begin{bmatrix}
                                                              0 & I \\
                                                              -\nabla U(\Psi_1) & 0
                                                            \end{bmatrix}\nabla T^t(\tilde{\vec{x}}).
  \]
  Taking $\vec{\tilde{x}}=\tilde{\Psi}_c$, we obtain
  \[\dd{\,}{t}\nabla T^t(\tilde{\Psi}_c) = J\nabla T^t(\tilde{\Psi}_c),\]
  where
  \[J=\begin{bmatrix}
        0 & I \\
        A^2 & 0
      \end{bmatrix}.\]
Based on the fact $T^t(\tilde{\Psi}_c)=\tilde{\Psi}_c$, we get $\nabla T^t(\tilde{\Psi}_c) = \me^{Jt}$ and
 \[T^s\tilde{\vec{{x}}} = \me^{Js}\tilde{\vec{x}} + O(\abs{\tilde{\vec{x}}}^2).\]
We obtain
\[
 H_0T^s\tilde{x}=T^s\tilde{x} + O(\abs{T^s\tilde{\vec{x}}}^{1+\beta}) = \me^{Js} \tilde{\vec{x}} + O(\abs{\tilde{\vec{x}}}^{1+\beta}).
\]
 Substituting the above into \eqref{eq:H} and note that $L^s=\me^{Js}$, we get
\[
 H = \int_{0}^{1}L^{-s}H_0T^s\d s = \tilde{\vec{x}} + O(\abs{\tilde{\vec{x}}}^{1+\beta}).
\]

 We now turn to $H^{-1}$. We can check that
 \begin{equation}\label{eq:H-1}
 H^{-1}=\int_{0}^{1}T^{-s}H_0^{-1}L^s\d s.
 \end{equation}
From \eqref{eq:H} we obtain
 \[H\int_{0}^{1}T^{-s}H_0^{-1}L^s\d s = \int_{0}^{1}\int_{0}^{1}L^{-t}H_0T^{t-s}H_0^{-1}L^s\d t\d s.\]
\eqref{eq:H-1} follows by noting that $L^{t-s}=H_0T^{t-s}H_0^{-1}$. Similar procedure can be applied to get the estimate $H^{-1}(\tilde{\vec{x}}) = \tilde{\vec{x}}+O(\abs{\tilde{\vec{x}}}^{1+\beta})$.
\end{proof}

\begin{lemma}\label{lem:nlunibounded}
  Let $\mathcal{U}$ be the neighborhood of  $\tilde{\Psi}_c$ ensured by Lemma \ref{lem:ZZL}. Then
\begin{equation}\label{eq:intTheta}
  \int_{0}^{T}\sqrt{\Theta_T(t)}\d t
\end{equation}
is uniformly bounded with respect to $T$ for any initial $\vec{x}_s\in\partial\mathcal{U}$ and terminal $\vec{x}_f\in\partial\mathcal{U}$.
\end{lemma}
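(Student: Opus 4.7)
The plan is to transfer the uniform bound of Lemma \ref{lem:unibounded} for the linearized system to the nonlinear system via the Hartman--Grobman type conjugacy $H$ of Lemma \ref{lem:ZZL}. For a nonlinear orbit $\tilde\Psi(t) = T^t(\tilde\Psi_0)$ on $[0,T]$ with $\Psi_1(0) = \vec{x}_s$, $\Psi_1(T) = \vec{x}_f$ on $\partial\mathcal{U}$, I introduce the conjugate $\tilde{x}(t) := H(\tilde\Psi(t))$, which by the intertwining property equals $L^t(H(\tilde\Psi_0))$ and therefore solves \eqref{eq:lfosys}. The near-identity estimates in \eqref{eq:1+beta} give, uniformly along the orbit,
\[
\Psi_1(t)-\vec{x}_c = x_1(t) + O(|\tilde{x}(t)|^{1+\beta}),\qquad \Psi_2(t) = x_2(t) + O(|\tilde{x}(t)|^{1+\beta}),
\]
and Lemma \ref{lem:bounded}(2) together with the energy conservation \eqref{eq:conservation} ensure that $H(\tilde\Psi_0)$ remains in a compact set of admissible initial data for the linear estimate as $T\to\infty$, so that the bound produced by Lemma \ref{lem:unibounded} for $\tilde{x}(t)$ can be chosen uniform in $T$.

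Next, I would combine this with the Taylor expansion $\nabla U(\Psi_1) = -A^2(\Psi_1-\vec{x}_c) + O(|\Psi_1-\vec{x}_c|^2)$ to obtain $\ddot\psi_T = A^2 x_1(t) + O(|\tilde{x}(t)|^{1+\beta})$ and $\dot\psi_T = x_2(t) + O(|\tilde{x}(t)|^{1+\beta})$. Substituting these into $\Theta_T = (|\ddot\psi_T|^2|\dot\psi_T|^2-\langle\ddot\psi_T,\dot\psi_T\rangle^2)/|\dot\psi_T|^4$ and comparing with the analogous linear quantity $\Theta_T^L$ in $(x_1,x_2)$, the goal is to write $\sqrt{\Theta_T(t)} = \sqrt{\Theta_T^L(t)} + \mathcal{R}(t)$ and reduce the claim to showing that $\int_0^T |\mathcal{R}(t)|\,dt$ is uniformly bounded in $T$; then Lemma \ref{lem:unibounded} controls the leading term and the proof is complete.

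The main obstacle is the remainder estimate, because the denominator $|\dot\psi_T|^4 \approx |x_2(t)|^4$ can nearly vanish at the isolated instants identified in the case analysis of Lemma \ref{lem:unibounded} (where combinations like $p_i\me^{\mu_i t} - q_i\me^{\mu_i(T-t)}$ cross zero), so a purely pointwise comparison fails. I plan to split $[0,T]$ into a generic set $G_\delta = \{t : |x_2(t)| \ge \delta\}$ and a finite union of short intervals $I_i$ of length $O(\delta)$ around each critical instant. On $G_\delta$, the correction carries an extra factor $|\tilde{x}(t)|^\beta$ compared to $\sqrt{\Theta_T^L}$, and the case-by-case bookkeeping with the exponentials $\me^{\mu_i t}$ from the proof of Lemma \ref{lem:unibounded} carries through with this extra factor only improving integrability, uniformly in $T$ once $\delta$ is fixed. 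On each $I_i$, I would use the geometric identity $\sqrt{\Theta_T(t)}\,dt = |d\hat\tau(t)|$ with $\hat\tau = \dot\psi_T/|\dot\psi_T|$, so that $\int_{I_i}\sqrt{\Theta_T}\,dt$ equals the total turning of the unit tangent across $I_i$; since $\psi_T$ is smooth and $I_i$ is short, this turning is bounded by a $T$-independent constant, exactly parallel to the argument that gives the $\arctan$-type bounds in Case 2.1 of Lemma \ref{lem:unibounded}. Combining these two contributions with Lemma \ref{lem:unibounded} yields the uniform bound $\int_0^T\sqrt{\Theta_T(t)}\,dt \le C$.
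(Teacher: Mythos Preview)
Your high-level plan coincides with the paper's: both conjugate to the linear flow via Lemma~\ref{lem:ZZL}, write $\Psi_1-\vec{x}_c = x_1 + O(r^{1+\beta})$, $\Psi_2 = x_2 + O(r^{1+\beta})$ with $r = |\tilde{\vec{x}}|$, and reduce to Lemma~\ref{lem:unibounded} plus a remainder. The gap is in your treatment of the remainder.

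Your decomposition with a \emph{fixed} threshold $\delta$ cannot work. Along the linear orbit one has $|x_2(t)|^2 \lesssim e^{-2\mu_m t} + e^{2\mu_n(t-T)}$, so for any fixed $\delta>0$ and large $T$ the set $\{|x_2|<\delta\}$ contains an interval of length comparable to $T$, not $O(\delta)$; the ``bad'' set is essentially the whole middle of $[0,T]$. Consequently your geometric argument on $I_i$ (``the turning is bounded because $\psi_T$ is smooth and $I_i$ is short'') has no force: smoothness alone does not bound total curvature, and $I_i$ is not short. The paper's remedy is to avoid any fixed threshold and instead compare $r$ with $|x_2|$ pointwise. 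Since along the linear orbit both $r^2$ and $|x_2|^2$ are of the same order $e^{-2\mu_m t}+e^{2\mu_n(t-T)}$ (away from the isolated instants $t_i$ of Lemma~\ref{lem:unibounded}), one gets $r^{2(1+\beta)} \le \tfrac12 |x_2|^2$ for large $t$ and $T-t$; hence the denominator perturbation $G_2 = O(r^{2+2\beta})$ satisfies $|G_2|\le \tfrac12|x_2|^2$, and $\sqrt{\Theta_T} \le C(\sqrt{F}+\sqrt{|G_1|})/|x_2|^2$. The second term is then $O(r^{2+\beta}/|x_2|^2) = O(r^\beta)$, whose integral over $[0,T]$ is uniformly bounded by the exponential decay of $r$. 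Near each $t_i$ one redoes the $\arctan$ computation of Case~2.1 with the corrections included. Your ``extra factor $|\tilde{\vec{x}}|^\beta$'' intuition is right for the numerator; controlling the denominator requires the comparison $r^{2(1+\beta)}\lesssim |x_2|^2$, not a fixed lower bound on $|x_2|$.
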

\begin{proof}
By Lemma \ref{lem:bounded}, we only need to consider the case $T\to +\infty$. Since $\psi_T$ is uniformly bounded, for any  finite $t_0$, $|\dot{\psi}_T|$ has a positive lower bound and thus
  \[\lim_{T\to\infty}\int_{0}^{t_0}\sqrt{\Theta_T(t)}\d t\leqslant C.\]
So we will only consider the case when $T$ and $t$ are both sufficiently large.

  In the neighborhood $\mathcal{U}$, we can estimate $\Theta_T(t)$ using the linearized version $\Theta_T^L(t)$. By Lemma \ref{lem:ZZL}, there exists $\beta\in (0,1)$ such that
\[
  \begin{aligned}
    \Psi_1 &= \vec{x}_1 + \vec{x}_c + O(r^{1+2\beta}),\\
    \Psi_2 &= \vec{x}_2 + O(r^{1+2\beta}),
  \end{aligned}
\]
where $r=\sqrt{|\vec{x}_1|^2+|\vec{x}_2|^2}$. A direct calculation shows
\[\sqrt{\Theta_T(t)} = \frac{\sqrt{F(t)+G_1(t)}}{|\vec{x}_2|^2 + G_2(t)}.\]
The functions $G_1=O(r^{4+2\beta})$, $G_2=O(r^{2+2\beta})$. As in the proof of Lemma \ref{lem:unibounded}, we first consider the interval $[0,T/2]$. $r^2$ can be explicitly written as
\[
\begin{aligned}
  r^2  = & \sum_{i\in S}(1-\me^{-2\mu_iT})^{-2}[(p_i\me^{\mu_i(2t-T)}+q_i)^2+\mu_i^2(p_i\me^{\mu_i(2t-T)}-q_i)^2]\me^{-2\mu_it}\\
 & + \sum_{i\in\bar{S}}(1-\me^{-2\mu_iT})^{-2}[(1-\me^{-2\mu_it})^2+\mu_i^2(1+\me^{-2\mu_it})^2]\me^{2\mu_i(t-T)}.
\end{aligned}
\]
Note that the coefficients of $\me^{-2\mu_i t}$ and $\me^{2\mu_i(t-T)}$ can not be zero, so there are constants $c>0, C>0$ such that
\[c\left(\sum_{i\in S}\me^{-2\mu_i t} + \sum_{i\in\bar{S}}\me^{2\mu_i(t-T)}\right)\leqslant r^2\leqslant C\left(\sum_{i\in S}\me^{-2\mu_i t} + \sum_{i\in\bar{S}}\me^{2\mu_i(t-T)}\right).\]
If we denote $m=\min S$, $n=\min \bar{S}$, we have
\begin{equation}\label{eq:r2lowup}
  c(\me^{-2\mu_m t} + \me^{2\mu_n(t-T)})\leqslant r^2\leqslant C(\me^{-2\mu_m t} + \me^{2\mu_n(t-T)})
\end{equation}
when $t$ is sufficiently large.

Recall that
\[
\begin{aligned}
   |\vec{x}_2|^2 = & \sum_{i\in S}\mu_i^2(1-\me^{-2\mu_iT})^{-2}[p_i\me^{\mu_i(2t-T)}-q_i]^2\me^{-2\mu_it}\\
 & + \sum_{i\in\bar{S}}\mu_i^2(1-\me^{-2\mu_iT})^{-2}[1+\me^{-2\mu_it}]^2\me^{2\mu_i(t-T)}.
\end{aligned}
\]

{\bf Case 1: $\mu_m>\mu_n$}. Let
\[t_c=\frac{\mu_nT}{\mu_n+\mu_m}<\frac{T}{2}.\]
 It is easy to check  that $[p_i\me^{\mu_i(2t-T)}-q_i]^2$ has a positive lower bound when $t<t_c$. In $[0,t_c]$, $\me^{-2\mu_mt}\geqslant \me^{2\mu_n(t-T)}$. So we have
\[r^{2(1+\beta)}\leqslant C\me^{-2(1+\beta)\mu_m t}\leqslant C \me^{-2\mu_mt}\leqslant \frac{1}{2}|\vec{x}_2|^2\]
when $t$ is sufficiently large. In $[t_c,T/2]$, $\me^{-2\mu_mt}\leqslant \me^{2\mu_n(t-T)}$. Since $T-t\geqslant T/2$, we have
\[r^{2(1+\beta)}\leqslant C\me^{2(1+\beta)\mu_n (t-T)}\leqslant C \me^{2\mu_n(t-T)}\leqslant \frac{1}{2}|\vec{x}_2|^2\]
when $T$ is large enough. Thus we obtain $\abs{G_2}\leqslant\frac{1}{2}|\vec{x}_2|^2$, which yields
\begin{equation}\label{eq:intmgn}
\begin{aligned}
  \int_{0}^{T/2}\sqrt{\Theta_T(t)}\d t \leqslant & C\int_{0}^{T/2}\frac{\sqrt{F}}{|\vec{x}_2|^2} + \frac{\sqrt{\abs{G_1}}}{|\vec{x}_2|^2}\d t\\
  \leqslant & C\int_{0}^{T/2}\sqrt{\Theta_T^L(t)}\d t + C\int_{0}^{t_c}\me^{-2\mu_m(1+\beta)t + 2\mu_mt}\d t\\
  & + C\int_{t_c}^{T/2} \me^{2\mu_n(1+\beta)(t-T)-2\mu_n(t-T)}\d t\\
  \leqslant & C\int_{0}^{T/2}\sqrt{\Theta_T^L(t)}\d t + \frac{C}{2\beta\mu_m} + \frac{C}{2\beta\mu_n}.
\end{aligned}
\end{equation}
By Lemma \ref{lem:unibounded},  we obtain that $\int_{0}^{T/2}\sqrt{\Theta_T(t)}\d t$ is uniformly bounded.

{\bf Case 2: $\mu_m\leqslant\mu_n$}. We have $\me^{-2\mu_mt}\geqslant \me^{2\mu_n(t-T)}$. Eq. \eqref{eq:r2lowup} yields $r^2=O(\me^{-2\mu_mt})$. If there exists $t_m$ such that $p_m\me^{\mu_m(2t-T)}-q_m=0$, then in a $\delta$-neighborhood of $t_m$,
\[\sqrt{\Theta(t)}\leqslant\frac{F(t)+O(\me^{-\mu_m(1+\beta)T})}{C^2(t-t_m)^2\me^{-\mu_mT} + \me^{-\mu_nT} + O(\me^{-\mu_m(1+\beta)T})}.\]
Applying similar argument as in the proof of Case 2.1, Lemma \ref{lem:unibounded}, we can show that $\int_{t_m-\delta}^{t_m+\delta}\sqrt{\Theta(t)}\d t$ is uniformly bounded. Outside this $\delta$-neighborhood, we have
\[r^{2(1+\beta)}\leqslant C\me^{-2(1+\beta)\mu_mt}\leqslant C \me^{-2\mu_mt}\leqslant \frac{1}{2}|\vec{x}_2|^2\]
for  sufficiently large $t$. So the integration
\begin{equation*}
\begin{aligned}
  \int_{0}^{T/2}\sqrt{\Theta_T(t)}\d t& \leqslant C\int_{0}^{T/2}\frac{\sqrt{F}}{|\vec{x}_2|^2} + \frac{\sqrt{G}_1}{|\vec{x}_2|^2}\d t\\
  &\leqslant C\int_{0}^{T/2}\sqrt{\Theta_T^L(t)}\d t + C\int_{t_m-\delta}^{t_m+\delta} + C\int_{0}^{T/2}\me^{-2\mu_m(1+\beta)t + 2\mu_mt}\d t\\
  & \leqslant C\int_{0}^{T/2}\sqrt{\Theta_T^L(t)}\d t + C +\frac{C}{2\beta\mu_m}
\end{aligned}
\end{equation*}
is also uniformly bounded.

In all, we have shown that $\int_{0}^{T/2}\sqrt{\Theta_T(t)}\d t$ is uniformly bounded. Similar argument applies to $\int_{T/2}^{T}\sqrt{\Theta_T(t)}\d t$.
\end{proof}

Lemma \ref{lem:nlunibounded} shows that  in a neighborhood of critical point $\vec{x}_c$, $\phi_k'$ has uniformly bounded variation.
\begin{lemma}\label{lem:UBV}
  Suppose that the graph limit $\phi^\star$ passes through a critical point $\vec{x}_c$ at $\alpha_c$. Then there is an interval $[\alpha_{-},\alpha_{+}]$ which contains $\alpha_c$ such that  $\{\phi'_k(\alpha)\}$ has uniformly bounded variation in this interval.
\end{lemma}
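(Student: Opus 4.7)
The plan is to apply Lemma~\ref{lem:nlunibounded} to a suitably chosen restriction of the minimizer $\psi_k$ near the critical point $\vec{x}_c$, and then convert the resulting time-integral bound on $\sqrt{\Theta_k}$ into a variation bound on $\phi_k'$ in the arclength parameter, following the same splitting used in Lemma~\ref{lem:BV}.

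First I would fix the neighborhood $\mathcal{U}$ of $\vec{x}_c$ furnished by Lemma~\ref{lem:nlunibounded}, shrinking it if necessary so that $\vec{x}_s,\vec{x}_f\notin\overline{\mathcal{U}}$, and pick a strictly smaller open neighborhood $\mathcal{V}$ of $\vec{x}_c$ with $\overline{\mathcal{V}}\subset\mathcal{U}$. Since $\phi^\star$ is continuous and $\phi^\star(\alpha_c)=\vec{x}_c$, I can then choose $\alpha_-<\alpha_c<\alpha_+$ so that $\phi^\star([\alpha_-,\alpha_+])\subset\mathcal{V}$. The uniform convergence $\phi_k\to\phi^\star$ supplied by Theorem~\ref{prop:conv} now forces $\phi_k([\alpha_-,\alpha_+])\subset\mathcal{V}$ for all $k$ sufficiently large.

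For each such $k$, let $[t_1^k,t_2^k]\subset[0,T_k]$ be the time interval corresponding to $[\alpha_-,\alpha_+]$ under the reparametrization $\alpha=\ell_k(t)$, and let $t_-^k\leqslant t_1^k$ (resp.\ $t_+^k\geqslant t_2^k$) denote the largest time before $t_1^k$ (resp.\ the smallest time after $t_2^k$) at which $\psi_k$ meets $\partial\mathcal{U}$; these times exist because $\vec{x}_s,\vec{x}_f\notin\overline{\mathcal{U}}$ while $\psi_k(t_1^k),\psi_k(t_2^k)\in\mathcal{V}\subset\mathcal{U}$. By the Bellman principle together with Assumption~\ref{asm:unique}, the restriction $\psi_k|_{[t_-^k,t_+^k]}$ is the unique OM minimizer between its boundary values $\psi_k(t_\pm^k)\in\partial\mathcal{U}$, and by energy conservation \eqref{eq:conservation} its energy remains $E_k$. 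Lemma~\ref{lem:nlunibounded} applied to this restriction therefore yields a constant $C$ independent of $k$ with
\begin{equation*}
\int_{t_1^k}^{t_2^k}\sqrt{\Theta_k(t)}\,\d t\,\leqslant\,\int_{t_-^k}^{t_+^k}\sqrt{\Theta_k(t)}\,\d t\,\leqslant\,C.
\end{equation*}

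To conclude I would mimic the two-case splitting used in Lemma~\ref{lem:BV}. By Lemma~\ref{lem:finitjump}, the set of $\alpha\in[\alpha_-,\alpha_+]$ where $U(\phi_k(\alpha))=E_k$ contains at most $K$ points; on each complementary open subinterval $\phi_k$ is $C^2$, and the change-of-variables identity \eqref{eq:ddphipsi} together with the uniform arclength bound $\abs{\phi_k'}\leqslant M$ converts the above time-integral estimate into $\sum_i\int\abs{\phi_k''}\,\d\alpha\leqslant MC$. The at most $2K$ possible jumps of $\phi_k'$ across the non-$C^2$ points contribute at most $2KM$ in total, and \eqref{eq:componentTV} then promotes this sum into a uniform bound on $\bigvee_{\alpha_-}^{\alpha_+}\phi_k'^{(i)}$ for each component $i$. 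The main obstacle is the uniformity of the constant in Lemma~\ref{lem:nlunibounded} with respect to the $k$-dependent endpoints $\psi_k(t_\pm^k)$; resolving this requires inspecting the proofs of Lemmas~\ref{lem:unibounded} and~\ref{lem:nlunibounded} to verify that their constants depend continuously on the boundary points and then invoking compactness of $\partial\mathcal{U}$.
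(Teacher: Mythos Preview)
Your proposal is correct and follows essentially the same route as the paper: localize to a neighborhood of $\vec{x}_c$ where Lemma~\ref{lem:nlunibounded} applies, convert the time-integral bound on $\sqrt{\Theta_k}$ into an arclength-variation bound via \eqref{eq:ddphipsi}, and handle the finitely many non-$C^2$ points with the Case~I/II splitting of Lemma~\ref{lem:BV}. Your explicit extension of the time interval to hit $\partial\mathcal{U}$ (via the Bellman principle and Assumption~\ref{asm:unique}) and your flag about uniformity of the constant over the $k$-dependent endpoints in fact make precise two steps the paper's proof leaves implicit; the resolution you propose---continuity of the constants in Lemmas~\ref{lem:unibounded} and~\ref{lem:nlunibounded} plus compactness of $\partial\mathcal{U}$---is exactly what is needed.
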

\begin{proof}
  Denote $\mathcal{U}=\{\vec{x}|\abs{\vec{x}-\vec{x}_c}<2\delta\}$ in which Lemma \ref{lem:nlunibounded} holds, and $\mathcal{V}=\{\vec{x}|\abs{\vec{x}-\vec{x}_c}<\delta\}\subset \mathcal{U}$. Define $(\alpha_{-},\alpha_{+})$ to be an  interval satisfying the following two conditions: (1) $\alpha_c\in (\alpha_{-},\alpha_{+})$; (2) $\forall\alpha\in(\alpha_{-},\alpha_{+})$, $\phi^\star(\alpha)\in \mathcal{V}$; (3) $\alpha_{+}-\alpha_{-}>0$.  Since $\phi_k$ uniformly converges to $\phi^\star$, when $k$ is sufficiently large,
  \[\abs{\phi_k(\alpha)-\vec{x}_c}\leqslant\abs{\phi_k(\alpha)-\phi^\star(\alpha)}+\abs{\phi^\star(\alpha)-\vec{x}_c}<2\delta,\quad \alpha\in (\alpha_{-},\alpha_{+}).\]
  Denote $I=[\alpha_{-},\alpha_{+}]$, we have $\phi_k(\alpha)\in \mathcal{U}$ for any $\alpha\in I$ when $k$ is sufficiently large.

  The total variation of $\phi'_k$ in $I$ is
  \begin{equation}\label{eq:summation2}
    \bigvee\limits_{\alpha_{-}}^{\alpha_{+}}\phi_k'=\sup_{\Delta}\sum_{j=1}^{n}\abs{\phi'_k(\alpha_{j+1})-\phi'_k(\alpha_j)},
  \end{equation}
  where $\Delta$ is a partition of $I$ such that
  \[\alpha_{-}=\alpha_0<\alpha_1<\alpha_2\dots<\alpha_{n+1}=\alpha_{+}.\]

  If for any $k$, $E_k>U(\phi_k(\alpha))$ for $\alpha\in I$, then $\phi_k\in C^2(I)$. The total variation in this interval can be estimated by
  \[\bigvee\limits_{\alpha_{-}}^{\alpha_{+}}\phi'_k\leqslant\int_{\alpha_{-}}^{\alpha_{+}}\abs{\phi''_k}\d\alpha.\]
  By Lemma \ref{lem:nlunibounded}, it is uniformly bounded.

  If $U(\phi_k(\alpha))=E_k$ holds for some $\alpha$, by Lemma \ref{lem:finitjump}, these points are finite. As in Lemma \ref{lem:BV}, we denote the elements of $\{\alpha|U(\phi_k(\alpha))=E_k\}$ by $\alpha^k_1<\alpha^k_2<\dots<\alpha^k_N$, and divide the summation in \eqref{eq:summation2} into two cases. For Case I, we have
  \[\sum_{j\in\mathrm{I}}\abs{\phi'_k(\alpha_{j+1})-\phi'_k(\alpha_j)}\leqslant 2MN.\]
For Case II, we have
  \[\sum_{j\in\mathrm{II}}\abs{\phi'_k(\alpha_{j+1})-\phi'_k(\alpha_j)}\leqslant\sum_{l=1}^{N}\int_{\alpha^k_l}^{\alpha^k_{l+1}}\abs{\phi''_k}\d \alpha\leqslant\int_{0}^{T_k}\sqrt{\Theta_k(t)}\d t\]
by noting the fact $\phi_k\in C^2(\alpha^k_l,\alpha^k_{l+1})$. It is also uniformly bounded by Lemma \ref{lem:nlunibounded}.
\end{proof}

\textit{Proof of Proposition \ref{prop:condev}.}
  By Assumption \ref{asm:discret}, the number of critical points are finite. Without loss of generality, we assume that $\phi^\star$ passes through only one critical point $\vec{x}_c$ at $\alpha=\alpha_c$. We will show that every component of $\phi'_k$  is uniformly bounded and has uniformly bounded variation. Denote $\phi{'}_k^{(i)}$ the $i$th component of $\phi'_k$. $|\phi{'}_k^{(i)}|\leqslant\abs{\phi'_k}\leqslant M$ trivially holds.

  By Lemma \ref{lem:UBV}, there is an interval $I=(\alpha_{-},\alpha_{+})$ which contains $\alpha_c$ such that $\{\phi'_k\}$ has uniformly bounded variation in $I$. Outside the interval $I$, the function $E^\star - U(\phi^\star(\alpha))>2m>0$ for some positive constant $m$ since $E^\star=\max U(\vec{x})$. From the fact that $\phi_k\to \phi^\star$ uniformly, we obtain $E_k-U(\phi_k)>m$ for any $\alpha\in [0,1]\backslash(\alpha_{-},\alpha_{+})$ when $k$ is large enough. This implies $\phi_k\in C^2([0,1]\backslash I)$. The total variation of $\phi_k'$ on $[0,1]\backslash I$ can be bounded by
  \[\bigvee\limits_{[0,1]\backslash I}\phi'_k = \int_{[0,1]\backslash I}\abs{\phi_k''}\d\alpha=\int_{[0,1]\backslash I}\frac{\abs{\phi_k'}\sqrt{\abs{\nabla U}^2\abs{\phi_k'}^2-\inp{\nabla U}{\phi_k'}^2}}{2E_k-2U(\phi_k)}\d\alpha\leqslant\frac{M_UM^2}{m},\]
  where $M_U$ is the upper bound of $\abs{\nabla U(\phi_k)}$. This shows that the sequence $\{\phi_k\}$ has uniformly bounded variation.

With Helly's theorem, we can choose subsequence $\{\phi'_{k_l}\}$ such that $\phi'_{k_l}$ converges almost everywhere  to some  function $g$ with $\abs{g(\alpha)}\leqslant M$ and bounded variation. By applying the dominated convergence theorem to
  \[\phi_{k_l}(\alpha)=\vec{x}_s + \int_{0}^{\alpha}\phi_{k_l}'(\beta)\d\beta,\]
we get
  \[\phi^*(\alpha)=\lim_{l\to\infty}\phi_{k_l}(\alpha)=\vec{x}_s + \lim_{l\to\infty}\int_{0}^{\alpha}\phi_{k_l}'(\alpha)\d\alpha = \vec{x}_s +\int_{0}^{\alpha}g(\beta)\d\beta.\]
  So $g(\alpha)=\phi^{\star}{'}(\alpha)$ almost everywhere. The proof is done.
\endproof

\end{appendix}

\end{document}